\documentclass{article}
\usepackage{amsfonts}
\usepackage{amsmath}
\usepackage{amssymb}
\usepackage[utf8]{inputenc}
\usepackage{amsthm}
\usepackage{xcolor}
\usepackage{soul}
\usepackage{hyperref}
\usepackage{geometry}
\usepackage{xcolor}

\newtheorem{theorem}{Theorem}

\newtheorem{lemma}[theorem]{Lemma}
\newtheorem{proposition}[theorem]{Proposition}

\newtheorem{assumption}{Assumption}
\theoremstyle{remark}
\newtheorem{remark}{Remark}
\theoremstyle{definition}

\begin{document} 

\title{Averaging Principle for Ordinary Differential Equations in a Multiscale Random Environment}

\author{Vincent Kagan$^{1}$}

\footnotetext[1]{Universit\'e de Lorraine, CNRS, Inria, IECL, F-54000 Nancy, France \\
  E-mail: vincent.kagan@univ-lorraine.fr}
  
\maketitle

\begin{abstract}
We study slow–fast stochastic systems in which a fast process, evolving within several ergodic components, drives the transition rates of a slow jump process switching between these components. Considering a differential equation whose vector field depends on both processes, we show that, under suitable ergodicity and regularity conditions, its solutions converge in law in $D([0,T],\mathbb R^d)$ to an averaged system depending only on the slow process. The averaged drift is obtained by averaging the original vector field over the invariant measure of the fast process within each component. Our results extend classical averaging theory to systems where the vector field is explicitly influenced by a fast random environment with multi-component dynamics.
\end{abstract}

\section{Introduction}

In this paper we consider a stochastic process $X$ taking values in the disjoint union
\[
E:=\bigsqcup_{i\in I} E_i,
\]
such that for all $i\in I$ and $x\in E_i$,
\begin{align*}
\mathbf P_x\big(X_t\in E_i,\forall t\geq0\big)=1.
\end{align*}
As in~\cite{kagan2025averaging}, we construct a jump Markov process $\mathbf X^1$ taking values in $E$ by using the piecing-out procedure introduced by Ikeda, Nagasawa and Watanabe~\cite{MB2}. This construction is closely related to the general theory of piecewise deterministic Markov processes (PDMPs) developed by Davis~\cite{Davis1984}, where deterministic flows are interlaced with random jump mechanisms. The trajectory of $\mathbf X^1$ between two successive jumps is governed by the process $X$. We then consider its accelerated version $\mathbf X^n$ where we replace $X$ by $X^n$ defined by
\begin{align*}
X_t^n=X_{nt},\qquad t\geq0.
\end{align*}

We introduce the associated index process $\mathbf I^n$ with values in $I$ by
\begin{align*}
\mathbf I_t^n=i
\quad\Longleftrightarrow\quad
\mathbf X_t^n\in E_i.
\end{align*}
In this way, $(\mathbf X_t^n,\mathbf I_t^n)_{t\geq0}$ is a two-time-scale stochastic system, where $\mathbf I^n$ is a slow jump process on the index set $I$, whose transition rates depend on the position of the fast process $\mathbf X^n$. Denoting by $(\tau_k^n)_{k\geq0}$ the jump times of $\mathbf I^n$, with $\tau_0^n=0$, the process $\mathbf X^n$ evolves between two jumps according to a dynamics which is assumed to be ergodic in later assumptions, on the component $E_{\mathbf I_t^n}$.

\medskip

We consider the following random ordinary differential equation on $\mathbb R^d$:
\begin{align*}
\frac{\mathrm d\mathbf Z_t^{n,z}}{\mathrm dt}
=
F_{\mathbf I_t^n}(\mathbf Z_t^{n,z},\mathbf X_t^n),
\qquad
\mathbf Z_0^{n,z}=z\in\mathbb R^d.
\end{align*}

Averaging principles for multiscale dynamical systems originate in the classical theory of nonlinear oscillations developed by Bogoliubov and Mitropolsky~\cite{BogoliubovMitropolsky1961}, and were later formalized in deterministic averaging theory by Sanders, Verhulst and Murdock~\cite{sanders2007averaging}. These methods now belong to the broader framework of random dynamical systems, perturbation theory and homogenization theory, see, for instance, Arnold~\cite{Arnold1998}, Khasminskii~\cite{khasminskii2011stochastic} and Pavliotis and Stuart~\cite{PavliotisStuart2008}.

In stochastic settings, the first convergence results for systems of this type are due to Khasminskii~\cite{Hasminskii1966a}, who proved convergence in mean for differential equations with rapidly oscillating random coefficients. In~\cite{Hasminskii1966b}, he further established convergence rates and central limit theorems for such equations. These works constitute one of the foundations of stochastic averaging theory. Another major contribution is the martingale problem approach introduced by Kurtz~\cite{Kurtz2006}, which provides a general weak convergence framework for averaging procedures. This perspective was subsequently developed in the convergence theory of Markov processes by Ethier and Kurtz~\cite{EthierKurtz2005}. A fundamental extension of averaging methods to random perturbations of dynamical systems is given by Freidlin and Wentzell~\cite{bookFW}, where convergence in probability of
\[
\sup_{t\in[0,T]}
|\mathbf Z_t^n-\bar{\mathbf Z}_t|
\]
is established under suitable assumptions. We also refer to Freidlin~\cite{Freidlin1985} for related developments connecting averaging methods and partial differential equations.

\medskip

Under suitable assumptions, we prove that the process $\mathbf Z^{n,z}$ converges in law to an averaged process $\bar{\mathbf Z}^z$ solving
\begin{align*}
\frac{\mathrm d\bar{\mathbf Z}_t^z}{\mathrm dt}
=
\bar F_{\mathbf I_t}(\bar{\mathbf Z}_t^z),
\qquad
\bar{\mathbf Z}_0^z=z\in\mathbb R^d,
\end{align*}
where $(\mathbf I_t)_{t\geq0}$ denotes the limiting regime process with jump times $(\tau_k)_{k\geq0}$, and where for all $i\in I$ and $z\in\mathbb R^d$,
\begin{align*}
\bar F_i(z)
=
\int_{E_i}
F_i(z,x)\,\mu_i(\mathrm dx).
\end{align*}
More precisely, for fixed $k\ge1$ and $t_1<\cdots<t_N\leq\tau_k^n$,
\[
(\mathbf Z_{t_1}^n, \dots, \mathbf Z_{t_N}^n)
\Rightarrow
(\bar{\mathbf Z}_{t_1}, \dots, \bar{\mathbf Z}_{t_N}).
\]
We further show that this convergence holds in the Skorokhod space $D([0,T];\mathbb R^d)$ endowed with the $J_1$ topology, namely
\[
\mathbf Z^{n,z}
\Rightarrow
\bar{\mathbf Z}^z
\qquad
\text{in }
D([0,T];\mathbb R^d),
\quad
T>0.
\]

\medskip

A large literature has developed around averaging principles for multiscale stochastic systems and Markov-modulated dynamics. Early results on fast-switching dynamical systems were obtained by Sarafyan and Skorokhod~\cite{Skorokhod1988}. Averaging principles for PDMPs and jump-driven systems have been studied in several complementary directions.

The works of Faggionato, Gabrielli and Ribezzi Crivellari~\cite{faggionato2008averaging} on averaging and large deviation principles for fully-coupled piecewise deterministic Markov processes (with applications to molecular motors), and of Pakdaman, Thieullen and Wainrib~\cite{Pakdaman2012} on asymptotic expansion and central limit theorem for multiscale piecewise-deterministic Markov processes, provide closely related frameworks. Their settings are particularly relevant as applications of averaging results for PDMPs, and our work can be seen as extending this line of research to random ordinary differential equations driven by a jump process with fast ergodic dynamics on multiple components.

Related frameworks for diffusion processes evolving in partitioned domains were developed by Bobrowski, Kaźmierczak and Kunze~\cite{Bobrowski2017}. Averaging for slow--fast piecewise deterministic Markov processes with an attractive boundary was recently investigated by Genadot~\cite{Genadot}. More recently, Mao and Shao~\cite{mao2024} established averaging principles for two-time-scale regime-switching diffusions, proving both $L^1$ and weak convergence under suitable ergodicity assumptions. Bao, Yin and Yuan in \cite{bao2017two} studied diffusions whose slow component is driven by an $\alpha$-stable Lévy process, establishing $L^p$ convergence in the case where the fast process is a Markov process taking values in a finite state space and is independent of the slow component, with either a single irreducible class or multiple irreducible classes, as in the present paper. They also considered the case where the dynamics of the slow component are governed by a diffusion driven by $\alpha$-stable noise, with the noise depending on the slow component. Their analysis relies, among other techniques, on a semigroup approach and on the use of a mild Itô formula.

Cerrai has also made substantial contributions to the development of averaging principles for slow–fast diffusion processes driven by Gaussian noise, particularly for reaction–diffusion equations, notably in the series of papers \cite{cerrai2009khasminskii,cerrai2009averaging}. These works establish convergence in distribution in the space of continuous functions, as well as convergence in probability, in the case where the slow component does not depend on the fast component. In these works, and in one of the first contributions in the literature to do so, Cerrai considers a framework in which the processes evolve in an infinite-dimensional space.

To establish these averaging principles, Cerrai follows a classical approach consisting of freezing the slow component and studying the existence of an invariant measure for the associated fast process, together with the existence and suitable properties of the averaged solutions that allow the convergence to be established. Once again, Cerrai adopts a semigroup approach and works with mild solutions. Subsequently, Cerrai studied the normal deviations associated with this type of stochastic differential equation system in \cite{cerrai2009normal}.

Monmarché and Strickler~\cite{Monmarche2025} investigated averaging and invariant measure expansions for Markov-modulated ODEs at high frequency. We also refer to the monograph of Mao and Yuan~\cite{bookMao} for a comprehensive treatment of stochastic differential equations with Markovian switching.

Such principles have also been developed in recent years beyond the classical Markovian framework. In the context of stochastic differential equations, averaging has been established for equations with locally dependent or locally Lipschitz coefficients~\cite{LIU20202910}. Results for two-time-scale stochastic functional differential equations with past-dependent switching have been obtained in~\cite{Wu2025functional}. Averaging for multiscale stochastic PDEs under local monotonicity conditions has been studied in~\cite{DCDS2025SPDE}.

Recent work has extended averaging principles to stochastic partial differential equations with non-Lipschitz jump noise see ~\cite{cerrai2003stochastic}. Weak and strong averaging principles for 2D Boussinesq equations with non-Lipschitz Poisson jump noise have been established in~\cite{Shi2026Boussinesq}. Averaging principles for slow-fast fractional stochastic differential equations have been investigated in~\cite{Breher2025fractional}. Results for slow-fast systems of PDEs with rough drivers using interpolation spaces have been obtained in~\cite{Li2026roughPDE}.

Averaging principles have also been extended to time-inhomogeneous multi-scale SDEs with partially dissipative coefficients~\cite{Sun2025timeinhomogeneous}.

Closser to the present setting, averaging principles for jump processes whose transition mechanisms depend on a rapidly mixing environment have recently been investigated in~\cite{kagan2025averaging}. The present work may be viewed as a further step in this direction, combining fast ergodic dynamics, regime-switching mechanisms and random ordinary differential equations driven by a jump process evolving on multiple time scales.

While classical averaging results focus on convergence to an averaged limit, recent work has also studied fluctuations around the averaged limit. Fluctuations from a random fractional averaging limit have been analyzed in~\cite{Li2025fluctuations}, providing central limit theorem-type results for averaging with fractional noise.

\medskip

The theoretical framework developed in this paper is motivated by several concrete application domains. In neuroscience, slow--fast stochastic systems are used to model excitable neurons and spike generation, with averaging providing effective descriptions of neuronal dynamics~\cite{Desroches2016neuroscience}. In epidemiology, stochastic differential equations with Markovian switching have been applied to SIS epidemic models with telegraph noise~\cite{AIMS2020SIS}. 

In systems biology, multiscale stochastic models describe biochemical reaction networks and gene expression at the single-cell level~\cite{Wilkinson2018biology}. Stochastic models of resource allocation in chemical reaction networks have been studied using multiscale approaches~\cite{Fromion2025resource}. Scaling methods and asymptotic analysis of multiscale approximations to reaction networks provide rigorous convergence results for biochemical systems with multiple time scales~\cite{Laurence2026scaling,Ball2006multiscale}.

In population dynamics, averaging principles for stochastic slow-fast models have been applied to study the development of ovarian follicles~\cite{Ballif2022population}.

Large deviation theory for slow--fast McKean--Vlasov equations also has applications in statistical mechanics~\cite{Swan2023LDP}.

These application domains illustrate the broad relevance of averaging principles for multiscale stochastic systems. They also motivate the development of general frameworks that can accommodate various types of fast ergodic dynamics and regime-switching mechanisms.

\section{Description of the model}

This section provides a detailed description of the model and a brief outline of its construction; see \cite{kagan2025averaging} for further details.

\medskip

\textbf{Description of the coupled process $(\mathbf X^n,\mathbf I^n)$.} As mentioned in the introduction, we construct the process $\mathbf X^1$ on $\sqcup_{i\in I} E_i$, where $(E_i)_{i\in I}$ is a family of measurable sets indexed by a measurable index set $I$, by piecing out a stochastic process $X$. 

\medskip

Let $(E_i,\mathcal{E}_i)_{i\in I}$ a collection of measurable spaces indexed by a measurable set $(I,\mathcal I)$.  We define the measurable space $(E,\mathcal{E})$ with $E:=\sqcup_{i\in I}E_i$ (here $\sqcup$ denotes the disjoint union) endowed with the $\sigma$-algebra $\mathcal{E}$  defined by
\[
\mathcal{E}:=\{S\subset E:S\cap E_i\in\mathcal{E}_i\;\textup{for each}\;i\in I\}
\]
In particular, with this choice of $\sigma$-algebra, the application 
\[
\phi:x\in E\to i\in I\text{ such that }x\in E_i
\]
is measurable.

In the remainder of this section, we describe an $E\times I$-valued stochastic process $(\mathbf X^1, \mathbf I^1)$, where $\mathbf I^1$ is a jump process on $I$ whose transitions depend on the position of $\mathbf X^1$ and, when $\mathbf I^1$ is in $i\in I$, $\mathbf X^1$ evolves on $E_i$ according to some progressively measurable dynamic up to the next jump time of $\mathbf I^1$. Then $(\mathbf X^1, \mathbf I^1)$ transitions from $E_i \times \{i\}$ to a new position in $E \times I$.
 
 In order to describe our base stochastic process $(\mathbf{X}^1, \mathbf I^1)$, we need three ingredients: the dynamic of $\mathbf X^1$ on each $E_i$ between the jumps of $\mathbf I^1$, the law of the jump times, and the law of the  new position in $E \times I$ after the jump. Note that it is actually sufficient to describe the dynamic of $\mathbf X^1$ on $E$, since $\mathbf I^1=\phi(\mathbf X^1)$.
 
\paragraph{Dynamic of $\mathbf X^1$ between the jumps of $\mathbf I^1$} Let $(V,\mathcal F)$ be a measurable space endowed with a filtration $(\mathcal F_t)_{t\geq 0}$ and a family of probability measures $(\mathbf{P}_x)_{x\in E}$. We assume that we are given a progressively measurable  process 
\[
X=(V, \mathcal F,(\mathcal F_t)_{t\geq 0},(\mathbf{P}_x)_{x\in E},(X_t)_{t\geq0})
\]
with values in the measurable space $(E,\mathcal E)$ and such that, for all $i\in I$ and $x_i\in E_i$,
\begin{equation}
\label{eq:stable-E-i}
\mathbf{P}_{x_i}(\forall t\geq0,X_t\in E_i)=1.
\end{equation}

\paragraph{Law of the jump time of $\mathbf{I}^1$}
We consider an absolutely continuous cumulative distribution function $G:\mathbb{R}_+\longrightarrow [0,1]$, and a measurable rate function $b: E\rightarrow [0,\infty)$ such that $\int_{0+} b(X_t)\,\mathrm dt<\infty$ $\mathbf P_x$-almost surely, for all $x\in E$. For all $x \in E$, we let $\nu_x$ denote the distribution of the random variable 
\[
\tau_1^1 = \inf\{ t \geq 0 \: : G\Big(\int_0^t b (X_s) \, \mathrm ds\Big)\geq  U \}
\]
under $\mathbf P_x$, and where $U$ is an independent random variable with uniform law on $[0,1]$. The idea is that $\nu_x$ is the law of the time elapsed between the $k$-th and the $(k+1)$-th jump times whenever the process is at $x$ after the $k$-th jump time. Note also that in the particular case where $G(x) = 1 - e^{-x}$, we recover the classical setting of a process with exponential jump rate~$b$.

\paragraph{Transition kernel}  We also assume that we are given a transition kernel $\pi$ from $E$ to $E$: if the process is at some position $x \in E$ just before the transition time, then its new position is chosen according to $\pi(x, \cdot)$.

\medskip

In the following proposition, $\Delta\notin E$ plays the role of a cemetery point.
\begin{proposition}
There exists a random process $\mathbf{X}^1 = (\Omega,\mathcal{F}^1,(\mathbf{X}_t^1)_{t\geq0}, (\mathbf{P}_x^1)_{x\in E})$ with state space $E\cup \{\Delta\}$ with lifetime $T_\Delta^1:=\inf\{t\geq 0,\mathbf X_t^1=\Delta\}$, and a random sequence of times $(\tau_k^1)_{k \geq 0}$ such that
\begin{enumerate}
    \item $\tau_0^1 = 0$;
    \item for all $k \geq 0$, conditionally on $\mathbf{X}_{\tau_k^1}^1$
    the distribution function of $\tau_{k+1}^1 - \tau_k^1$ is given by $\nu_{\mathbf{X}_{\tau_k^1}^1}$;
    \item for all $k \geq 0$, conditionally on $\mathbf{X}_{\tau_k^1}^1$,  $(\mathbf{X}_{t-\tau_k^1}^1)_{t\in [\tau_k^1, \tau_{k+1}^1)}$ is distributed as $(X_t)_{t\in[0,\zeta)}$ under $\mathbf P_{\mathbf{X}_{\tau_k^1}^1}$;
    \item at time $t = \tau_{k+1}^1$, $\mathbf{X}_t^1$ transitions to a random position selected according to the kernel $\pi(\mathbf{X}_{\tau_{k+1}^1-}^1,\,\cdot\,)$;
    \item $T_\Delta^1=\lim_{n\to+\infty} \tau_k^1$.
\end{enumerate}
In addition, this process satisfies the strong Markov property at its transition times $(\tau_k^1)_{k \geq 0}$.
\end{proposition}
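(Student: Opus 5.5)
The plan is an explicit construction on a product space, following the piecing-out procedure of Ikeda, Nagasawa and Watanabe~\cite{MB2}. Let
\[
\Omega:=\big(V\times[0,1]\times E\big)^{\mathbb N},
\]
the $k$-th factor carrying the path of $X$ used after the $k$-th jump, a uniform variable $U_k$, and an auxiliary post-jump variable. The coordinates $(\omega_k,U_k,Y_k)_{k\ge0}$ are then driven by a Markov chain on the extended state space $E\cup\{\Delta\}$.

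\textbf{Step 1 (the jump chain and the segments).} Set $\tau_0^1=0$ and $\mathbf X^1_{\tau_0^1}=x$. Inductively, given the position $y_k$ at time $\tau_k^1$:
\begin{itemize}
\item draw $\omega_k$ under $\mathbf P_{y_k}$;
\item draw $U_k$ uniform on $[0,1]$, independent of everything else;
\item set $\sigma_k:=\inf\{t\ge0: G(\int_0^t b(X_s(\omega_k))\,\mathrm ds)\ge U_k\}$ and $\tau_{k+1}^1=\tau_k^1+\sigma_k$;
\item draw $y_{k+1}\sim\pi(X_{\sigma_k-}(\omega_k),\cdot)$.
\end{itemize}
If $\sigma_k=\infty$, the process stays on that segment forever. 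Set $\mathbf X^1_t=X_{t-\tau_k^1}(\omega_k)$ for $t\in[\tau_k^1,\tau_{k+1}^1)$, and $\mathbf X^1_t=\Delta$ for $t\ge\lim_k\tau_k^1$.

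\textbf{Step 2 (measurability of the kernels).} The key fact is that the law of $(\omega_k,\sigma_k,y_{k+1})$ given $y_k$ is a measurable kernel from $E$ to $V\times[0,\infty]\times E$. This needs three ingredients:
\begin{itemize}
\item $x\mapsto\mathbf P_x(A)$ is measurable, which is part of the data of a process with family $(\mathbf P_x)$;
\item $(t,\omega)\mapsto\int_0^tb(X_s)\,\mathrm ds$ is measurable and continuous in $t$, by progressive measurability and the integrability assumption on $b$;
\item $\omega\mapsto X_{\sigma-}(\omega)$ is measurable. This requires left limits or some convention; if $X$ lacks left limits, I would replace $X_{\sigma-}$ by $X_\sigma$, which is measurable by progressive measurability.
\end{itemize}
With these, the Ionescu--Tulcea theorem gives a unique probability $\mathbf P^1_x$ on $\Omega$ with the prescribed successive conditional laws. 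Properties 1--5 are then immediate by construction:
\begin{itemize}
\item property 2 holds because, conditionally on $y_k$, $\sigma_k\sim\nu_{y_k}$ by definition of $\nu$;
\item property 3 holds because $\omega_k\sim\mathbf P_{y_k}$, and, since $(\mathbf X^1_{\tau_k^1+t})_{t\ge0}$ is a functional of the $k$-th segment only, it is distributed as $X$ under $\mathbf P_{y_k}$ on $[0,\sigma_k)$.
\end{itemize}
The conditional independence of later blocks from earlier ones, given $y_k$, is exactly the Markov structure of the Ionescu--Tulcea chain.

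\textbf{Step 3 (strong Markov property at $\tau_k^1$).} Let $\theta_k$ denote the shift of the coordinate sequence by $k$ blocks. Ionescu--Tulcea gives that, conditionally on $\mathcal F_{\tau_k^1}:=\sigma(\omega_j,U_j,y_{j+1};\,j<k)$, the shifted sequence has law $\mathbf P^1_{y_k}$. It remains to identify this $\sigma$-field with the natural stopped $\sigma$-field of $\mathbf X^1$ at $\tau_k^1$, at least to show it contains it. That yields $\mathbf E^1_x[H\circ\theta_{\tau_k^1}\mid\mathcal F_{\tau_k^1}]=\mathbf E^1_{\mathbf X^1_{\tau_k^1}}[H]$.

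\textbf{Main obstacle.} I expect the hardest step to be Step 2, specifically making $\nu_x$ and the transition at $\tau^1_{k+1}$ measurable kernels with only progressive measurability of $X$. This is also the point where the $\sigma$-field in the strong Markov property must be chosen carefully. I would handle it by working throughout with the block $\sigma$-fields and invoking~\cite{MB2} for the identification with path $\sigma$-fields.
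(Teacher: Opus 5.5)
Your proposal is correct and matches the paper's approach. The paper gives no in-line proof of this proposition; it builds $\mathbf X^1$ by the Ikeda--Nagasawa--Watanabe piecing-out procedure and defers details to the companion paper, which is exactly your product-space/Ionescu--Tulcea realization (independent uniform variable for the killing time, revival through $\pi$, blocks concatenated, and the strong Markov property at $\tau_k^1$ obtained from the Markov structure of the block sequence, proved with respect to the larger block $\sigma$-field). Your points of care are the right ones: measurability of $x\mapsto\nu_x$, the choice of $X_{\sigma-}$ versus $X_\sigma$, and handling $\sigma_k=\infty$. Note also that identifying $\mathbf X^1_{\tau_k^1}=X_0(\omega_k)$ with $y_k$ uses the implicit convention $\mathbf P_y(X_0=y)=1$.
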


Then we consider the accelerated version $(\mathbf X^n,\mathbf I^n)$ of $(\mathbf X^1,\mathbf I^1)$ where we take $X^n=(X_{nt})_{t\geq0}$ instead of $X$.
We denote by $(\tau_k^n)_{k\geq0}$ the jump sequence of $\mathbf I^n$.

\medskip

\textbf{Definition of the stochastic differential equation $\mathbf Z^n$.}
From the process $(\mathbf X^n,\mathbf I^n)$ on $E \times I$, we define, for each initial condition $z \in\mathbb R^d$, a process $(\mathbf Z_t^{n,z})_{t \ge 0}$ with values in $Z$ as the solution to the differential equation
\begin{align*}
\frac{\mathrm d\mathbf Z_t^{n,z}}{\mathrm dt}
= F_{\mathbf I_t^n}(\mathbf Z_t^{n,z}, \mathbf X_t^n),
\quad
\mathbf Z_0^{n,z} = z,
\end{align*}
where $(F_i)_{i \in I}$ is a family of measurable functions from $Z \times E$ to $Z$.

\medskip

We assume that for all $i \in I$,
\begin{align}
\label{hyp:Lip}
L_i := \sup_{x \in E_i} \|F_i(\,\cdot\,,x)\|_{\mathrm{Lip}} < \infty.
\end{align}

\begin{remark}
Under assumption \eqref{hyp:Lip}, for every $n \in \mathbb N$ and every initial condition $z \in\mathbb R^d$, the equation above admits a unique adapted continuous solution $(\mathbf Z_t^{n,z})_{t \ge 0}$.
\end{remark}

The uniqueness of the adapted continuous solution $(\mathbf Z_t^{n,z})_{t \ge 0}$ for every initial condition $z \in\mathbb R^d$ implies that the process $(\mathbf Z^n,\mathbf X^n)$ satisfies the strong Markov property at the stopping time $\tau_1^n$.

\begin{proposition}
Let $(\mathbf X,\mathbf I)$ be a stochastic process with values in $E \times I$, and let $\tau_1$ be a stopping time such that $(\mathbf X,\mathbf I)$ satisfies the strong Markov property at time $\tau_1$. Let $\mathbf Z$ be defined as the solution to
\[
\frac{\mathrm d\mathbf Z_t}{\mathrm dt} = F_{\mathbf I_t}(\mathbf Z_t,\mathbf X_t), \quad \mathbf Z_0 = z,
\]
where $(F_i)_{i \in I}$ is a family of measurable functions from $Z \times E$ to $Z$, and assume that this equation admits a unique adapted solution. Then the process $(\mathbf Z,\mathbf X)$ satisfies the strong Markov property at time $\tau_1$.
\end{proposition}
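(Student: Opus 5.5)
The strategy is to show that, after time $\tau_1$, the pair $(\mathbf Z,\mathbf X)$ is a deterministic measurable functional of its value at $\tau_1$ and of the shifted process $(\mathbf X_{\tau_1+s},\mathbf I_{\tau_1+s})_{s\ge0}$. The strong Markov property of $(\mathbf X,\mathbf I)$ at $\tau_1$ is then transported to $(\mathbf Z,\mathbf X)$.

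First, I will use uniqueness of adapted solutions to obtain a \emph{flow} (cocycle) representation. For a deterministic driving path $\omega=(x_s,i_s)_{s\ge0}$ and an initial point $z$, let $\Phi_t(z,\omega)$ be the unique solution of $\dot y=F_{i_t}(y,x_t)$, $y_0=z$. I will check that $\Phi$ is jointly measurable in $(t,z,\omega)$. This is done by writing $\Phi$ as the limit of Picard iterates, each of which is measurable because $F$ is measurable and the path is progressively measurable; the limit exists under the Lipschitz bound \eqref{hyp:Lip} when it is used, and otherwise by the assumed uniqueness. Next, for $t\ge0$ both
\[
s\mapsto \mathbf Z_{\tau_1+s} \quad\text{and}\quad s\mapsto \Phi_s\big(\mathbf Z_{\tau_1},\theta_{\tau_1}(\mathbf X,\mathbf I)\big)
\]
solve the same ODE driven by the shifted path, with the same initial value $\mathbf Z_{\tau_1}$. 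By uniqueness they coincide, so
\[
\mathbf Z_{\tau_1+s}=\Phi_s\big(\mathbf Z_{\tau_1},\theta_{\tau_1}(\mathbf X,\mathbf I)\big).
\]

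Second, $\mathbf Z_{\tau_1}$ is $\mathcal F_{\tau_1}$-measurable, since $\mathbf Z$ is adapted and continuous, hence progressively measurable. For bounded measurable $H$ on path space I then write
\[
H\big((\mathbf Z,\mathbf X)_{\tau_1+\cdot}\big)=\tilde H\big(\mathbf Z_{\tau_1},\theta_{\tau_1}(\mathbf X,\mathbf I)\big),
\]
with $\tilde H$ jointly measurable. Conditioning on $\mathcal F_{\tau_1}$ and applying the strong Markov property of $(\mathbf X,\mathbf I)$ at $\tau_1$ in its "frozen variable" form gives
\[
\mathbb E\big[\tilde H(\mathbf Z_{\tau_1},\theta_{\tau_1}(\mathbf X,\mathbf I))\,\big|\,\mathcal F_{\tau_1}\big]=g(\mathbf Z_{\tau_1},\mathbf X_{\tau_1}),
\qquad g(z,x)=\mathbf E_x\big[\tilde H(z,(\mathbf X,\mathbf I))\big].
\]
The "frozen variable" form means conditioning a function of an $\mathcal F_{\tau_1}$-measurable variable and the future path. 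It follows from the basic statement for indicator functions of product sets by a monotone class argument. Finally, $g(z,x)$ is exactly $\mathbf E_{(z,x)}[H((\mathbf Z,\mathbf X))]$, which is the strong Markov property of $(\mathbf Z,\mathbf X)$ at $\tau_1$. Here $\mathbf I=\phi(\mathbf X)$ is determined by $\mathbf X$.

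The main obstacle is the measurability of the solution map $\Phi$ jointly in the initial condition and the driving path, which is what makes the substitution into $g$ legitimate. If the abstract hypothesis of "unique adapted solution" alone does not give measurability, I would fall back on the Lipschitz condition \eqref{hyp:Lip}. Under \eqref{hyp:Lip}, the Picard scheme converges uniformly on compacts, and each iterate is an integral of a measurable function, so measurability passes to the limit.
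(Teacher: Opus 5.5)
Your proposal is correct and follows the same route as the paper. Uniqueness makes $\mathbf Z_{\tau_1}$ $\mathcal F_{\tau_1}$-measurable and expresses the post-$\tau_1$ evolution of $(\mathbf Z,\mathbf X)$ as a functional of $\mathbf Z_{\tau_1}$ and the shifted path of $(\mathbf X,\mathbf I)$; the strong Markov property of $(\mathbf X,\mathbf I)$ at $\tau_1$ then transfers to $(\mathbf Z,\mathbf X)$. You are more careful than the paper, which leaves implicit two steps you supply: the joint measurability of the solution map, which you correctly secure via Picard iteration under the standing assumption \eqref{hyp:Lip}, and the freezing-lemma step for conditioning a function of $\mathbf Z_{\tau_1}$ and the future path on $\mathcal F_{\tau_1}$.
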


\begin{proof}
By the uniqueness of the adapted solution, the process $\mathbf Z$ is a measurable functional of the trajectory $(\mathbf X_s,\mathbf I_s)_{0 \le s \le t}$ for each $t \ge 0$. In particular, $\mathbf Z_{\tau_1}$ is measurable with respect to $\mathcal F_{\tau_1}^{\mathbf X,\mathbf I}$. Moreover, for all $t \ge 0$, we have
\[
\mathbf Z_{\tau_1+t}
= \mathbf Z_{\tau_1} + \int_0^t F_{\mathbf I_{\tau_1+s}}(\mathbf Z_{\tau_1+s}, \mathbf X_{\tau_1+s})\, ds.
\]
Hence, the future evolution of $(\mathbf Z,\mathbf X)$ after time $\tau_1$ is entirely determined by $(\mathbf Z_{\tau_1}, \mathbf X_{\tau_1})$ together with the future trajectory $(\mathbf X_{\tau_1+s}, \mathbf I_{\tau_1+s})_{s \ge 0}$. By the strong Markov property of $(\mathbf X,\mathbf I)$ at time $\tau_1$, the conditional law of $(\mathbf X_{\tau_1+t}, \mathbf I_{\tau_1+t})_{t \ge 0}$ given $\mathcal F_{\tau_1}^{\mathbf X,\mathbf I}$ depends only on $(\mathbf X_{\tau_1}, \mathbf I_{\tau_1})$. Therefore, the conditional law of $(\mathbf Z_{\tau_1+t}, \mathbf X_{\tau_1+t})_{t \ge 0}$ given $\mathcal F_{\tau_1}^{\mathbf X,\mathbf I}$ depends only on $(\mathbf Z_{\tau_1}, \mathbf X_{\tau_1})$. This proves that $(\mathbf Z,\mathbf X)$ satisfies the strong Markov property at time $\tau_1$.
\end{proof}

\medskip

\textbf{Definition of the limiting index process.} 
Let $(\Omega,\mathcal F, (\mathbf I_t)_{t\geq 0},(\mathbf P_i)_{i\in I})$ be a semi-Markov process whose dynamics are defined as follows:  for all $i\in I$, under $\mathbf P_i$  $(\mathbf I_t)_{t\geq 0}$ is an $I$-valued pure jump semi-Markov process 
with $\mathbf P_i(\mathbf I_0=i)=1$, such that its  first jump time $\tau_1$ is defined by
\begin{align*}
\tau_1=\inf\big\{t>0:G\big(t\mu_i(b)\big)\geq U\big\}.
\end{align*}
and such that its jumps transitions probabilities $(P(i,j))_{i,j\in I}$ are given by
\begin{equation}
\label{eq:jumpMarkov}
P(i,J)=\int_E\frac{\mu_i(\mathrm dx)}{\mu_i(b)}\,b(x)\,\nu(x,J), \quad \forall J \in\mathcal{I}.
\end{equation}
We note $(\tau_k)_{k\geq1}$ the sequence of jump times of $\mathbf I$. We define the explosion time of $\mathbf I$ as
\[
    \tau_\infty=\lim_{k\to\infty}\tau_k\in[0,+\infty],
\]
and we set $\mathbf I_t=\Delta_{I}$ $\forall t\geq \tau_\infty$, where $\Delta_{I}\notin I$ is a cemetery point. 

\medskip 
\noindent \textbf{Definition of the limiting Markov chain on $E$.} 
Let $(Y_n)_{n \geq 0}$ the Markov chain on $E\cup \{\Delta\}$, whose transition kernel $Q$ is defined as follows: for all $x\in E\cup\{\Delta\}$ and bounded measurable function $f:E \cup \{\Delta\} \to\mathbb R$,
\[
Qf(x) = \begin{cases}
\int_E\frac{\mu_{\phi(x)}(\mathrm dy)}{\mu_{\phi(x)}(b)}\,b(y)\,\pi(y,f),\quad \text{ if }\mu_{\phi(x)}(b)>0, \\
f(\Delta),\quad \text{ if }\mu_{\phi(x)}(b)=0,
\end{cases}
\]
where $b(\Delta):=0$.
Informally, starting from $Y_0 =x$, with probability $\frac{\mu_{\phi(x)}(b \,\cdot)}{\mu_{\phi(x)}}$, one chooses a position, say $Y_1^-$, before the jump, and then picks $Y_1$ according to $\pi(Y_1^-,\,\cdot\,)$.

Note that, for all $x\in E$, $Q f(x)$ depends on $x$ only through $\phi(x)$, so that the law of $Y_{n+1}$ depends on $Y_n$ only through $\phi(Y_n)$.
%On peut décomposer l'action de $Q$ en deux morceaux:
We also observe that
\[
Pf(\phi(x)) = Q (f \circ \phi)(x)
\]
so that $\mathbf I_{\tau_n} = \phi(\mathbf{Y}_{n})$, for all $n\geq 0$.

\medskip

\textbf{Definition of the averaged limiting differential equation.} Finally we defined the average limiting process as the solution of the stochastic differential equation:
\begin{align*}
\frac{\mathrm d\bar{\mathbf Z}_t^z}{\mathrm dt}=\bar F_{\mathbf I_t}(\bar{\mathbf Z}_t^z),\quad\bar{\mathbf Z}_0^z=z
\end{align*}
where for all $i\in I,z\in\mathbb R^d$
\begin{align*}
\bar F_i(z)=\int_EF_i(z,x)\,\mu_i(\mathrm dx)
\end{align*}

\section{Main results}

In this section, we first introduce the assumptions required for our first convergence result, presented in the second subsection: the convergence of the finite-dimensional distributions of $\mathbf Z^n$. The corresponding proofs are given in Subsections \ref{subsec:proofConv} and \ref{subsec:proofConvFD}. Finally, in the last subsection, we prove the convergence in law of the sequence $(\mathbf Z^n)_{n\geq1}$ in the Skorokhod topology.

\subsection{Assumptions}

In the remaining of this paper, we assume the following ergodic property for the process restricted to each $E_i$. Let $\mathcal{A}_i$ the subset of functions $f \in L^1(\mu_i)$ such that, for all $x \in E_i$ and all $t \geq 0$,
\begin{equation}
    \label{eq:finite-integral-trajectory}
    \mathbf{P}_{x_i} \Big( \int_0^t | f(X_s) |\, \mathrm ds < + \infty \Big) = 1.
\end{equation}
\begin{assumption}
\label{assumption:ergo}

For all $i\in I$, there exists a probability distribution $\mu_i$ on $E_i$ such that, for all $h\in \mathcal{A}_i$,
 and $x_i\in E_i$
\begin{equation}
\label{hyp:ergo}
\lim_{t\to\infty}\frac{1}{t}\int_0^t h(X_s)\,\mathrm ds=\mu_i(h)\quad\mathbf{P}_{x_i}\textup{-a.s.}
\end{equation}
where $\mu_i(h):=\int_E h(x)\,\mu_i(\mathrm dx)$.
\end{assumption}
We also make the following assumption on the transition rate function $b$:
\begin{assumption}
\label{hyp:b-UI}
 We assume that the family
\begin{equation}
	\label{hyp:unifintegr}
	\Big(\frac{1}{n}\int_0^{nt} b(X_s)\,\mathrm ds\Big)_{n\geq 1}
\end{equation}
is uniformly integrable for all $t\geq0$ under $\mathbf{P}_{x}$ for all $x\in E$.
\end{assumption}
\begin{assumption}
We also suppose that for all $i\in I$ and $z\in\mathbb R^d$ we have
\begin{align}
\label{hyp:ErgoF}
\sup_{t>0}\mathbf E_{x_i}\bigg[\Big\lvert\frac{1}{T}\int_t^{t+T}F_i(z,X_{ns})\,\mathrm ds-\bar F_i(z)\Big\rvert\bigg]\underset{n\to\infty}{\longrightarrow}0.
\end{align}
\end{assumption}
\begin{assumption}
\begin{align}
\label{hyp:Fbound}
M_i:=\|F_i(0,\,\cdot\,)\|_\infty=\sup_{x\in E_i}\lvert F_i(0,x)\rvert<\infty.
\end{align}
which implies \eqref{eq:finite-integral-trajectory} for $F_i(0,\,\cdot\,)\in\mathcal{A}_i$ with \eqref{hyp:Lip}.
\end{assumption}
We also remind the Lipschitz assumption \eqref{hyp:Lip}:
\begin{assumption}
For all $i \in I$,
\begin{align*}
L_i := \sup_{x \in E_i} \|F_i(\,\cdot\,,x)\|_{\mathrm{Lip}} < \infty.
\end{align*}
\end{assumption}
\begin{remark} 
\label{rem:UI-implies-L1mu}
Here we remind the result of Lemma 3.8. in \cite{kagan2025averaging}: 
Under  Assumptions~\ref{assumption:ergo} and~\ref{hyp:b-UI}, we have $b \in \mathcal{A}_i$ for all $i \in I$. In particular, for all $x \in E_i$, 
\begin{equation}
\label{eq:ergob}
\lim_{t \to \infty} \frac{1}{t} \int_0^t b(X_s) \mathrm ds = \mu_i(b), \quad \mathbf{P}_x\textup{-a.s.}
\end{equation}
\end{remark}

\subsection{Convergence in law of \texorpdfstring{$\mathbf Z^n$}{Zn}}

In the sequel for $z\in\mathbb R^d$, $n\geq1$ and $i\in I$, we denote by $\mathbf Z^{n,z,i}$ the solution of the following random differential equation
\begin{align*}
\frac{\mathrm d\mathbf Z_t^{n,z,i}}{\mathrm dt}=F_i(\mathbf Z_t^{n,z,i},X_{nt})\quad\textup{with}\quad\mathbf Z_0^{n,z,i}=z,
\end{align*}
and $\mathbf Z^{z,i}$ the solution of the following ordinary differential equation
\begin{align*}
\frac{\mathrm d\bar{\mathbf Z}_t^{z,i}}{\mathrm dt}=\bar F_i(\bar{\mathbf Z}_t^{z,i})\quad\textup{with}\quad\bar{\mathbf Z}_0^{z,i}=z.
\end{align*}

Before stating our theorem we remind the result let us recall the $L^1$ convergence result given in \cite{Hasminskii1966a}: under assumptions \eqref{hyp:Lip} and \eqref{hyp:ErgoF} if $F_i(0,\,\cdot\,)\in\mathcal A_i$ (see \ref{eq:finite-integral-trajectory}), we have the following result
\begin{align}
\label{theo:convSup}
\sup_{t\in[0,T]}\mathbf E_{x_i}\big[\lvert\mathbf Z_t^{n,z,i}-\bar{\mathbf Z}_t^{z,i}\rvert\big]\underset{n\to\infty}{\longrightarrow}0.
\end{align}

We aim to extend the previous theorem to the case where the function $F$ depends on an index process indicating the space in which the process $\mathbf X^n$ evolves.
\begin{theorem}
\label{th:conv}
Let $z\in\mathbb R^d$, $i\in I$, $x_i\in E_i$ and $T>0$, under assumptions \eqref{hyp:Lip}, \eqref{hyp:unifintegr}, \eqref{hyp:ErgoF} and \eqref{hyp:Fbound}, we have, for all $\varphi\in C_b^1(\mathbb R^d)$
\begin{align*}
\mathbf E_{x_i}\big[\varphi(\mathbf Z_T^{n,z})\,\mathbf 1_{\{T\leq\tau_k^n\}}\big]\underset{n\to\infty}{\longrightarrow}\mathbf E_{x_i}\big[\varphi(\bar{\mathbf Z}_T^z)\,\mathbf 1_{\{T\leq\tau_k\}}\big]\quad\forall k\geq1.
\end{align*}
\end{theorem}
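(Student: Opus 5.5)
We argue by induction on $k\geq1$, decomposing along the jump times $\tau_k^n$ and using the strong Markov property of $(\mathbf Z^n,\mathbf X^n)$ at $\tau_1^n$ established in the previous proposition. Two ingredients are taken as given from \cite{kagan2025averaging}, where they are proved under Assumptions~\ref{assumption:ergo} and~\ref{hyp:b-UI}:
\begin{enumerate}
\item the joint convergence in law $(\tau_1^n,\phi(\mathbf X^n_{\tau_1^n}))\Rightarrow(\tau_1,\mathbf I_{\tau_1})$ under $\mathbf P_{x_i}$;
\item more precisely, $\int_0^{t}b(X_{ns})\,\mathrm ds\to t\mu_i(b)$ almost surely, uniformly on compacts, which follows from \eqref{eq:ergob}.
\end{enumerate}

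\textbf{Step 1: the case $k=1$.} On $\{t<\tau_1^n\}$ we have $\mathbf Z^{n,z}_t=\mathbf Z^{n,z,i}_t$, and this last process is driven by $X_{n\cdot}$ alone. The jump time can be written $\tau_1^n=\inf\{t: G(\int_0^t b(X_{ns})\,\mathrm ds)\geq U\}$ with $U$ independent of $X$. Conditioning on $X$ and integrating out $U$ gives
\[
\mathbf E_{x_i}\big[\varphi(\mathbf Z_T^{n,z})\mathbf 1_{\{T\leq\tau_1^n\}}\big]=\mathbf E_{x_i}\Big[\varphi(\mathbf Z_T^{n,z,i})\Big(1-G\Big(\int_0^T b(X_{ns})\,\mathrm ds\Big)\Big)\Big],
\]
up to the boundary event $\{\tau_1^n=T\}$, which is null since $G$ is absolutely continuous. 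We then bound the difference with the target $\varphi(\bar{\mathbf Z}_T^{z,i})(1-G(T\mu_i(b)))$ by two terms:
\begin{itemize}
\item $\|\varphi'\|_\infty\,\mathbf E_{x_i}|\mathbf Z_T^{n,z,i}-\bar{\mathbf Z}_T^{z,i}|$, which tends to $0$ by \eqref{theo:convSup}; this applies because \eqref{hyp:Lip}, \eqref{hyp:ErgoF} and \eqref{hyp:Fbound} give $F_i(0,\cdot)\in\mathcal A_i$;
\item $\|\varphi\|_\infty\,\mathbf E|G(\int_0^Tb(X_{ns})\,\mathrm ds)-G(T\mu_i(b))|$, which tends to $0$ by the ergodic limit \eqref{eq:ergob} (after the time change $\int_0^T b(X_{ns})\,\mathrm ds=\frac1n\int_0^{nT}b(X_s)\,\mathrm ds$), continuity of $G$, and bounded convergence.
\end{itemize}
Since $\bar{\mathbf Z}^z=\bar{\mathbf Z}^{z,i}$ on $\{T\le\tau_1\}$, this proves the case $k=1$.

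\textbf{Step 2: the induction step.} Write $\mathbf 1_{\{T\le\tau_{k+1}^n\}}=\mathbf 1_{\{T\le\tau_1^n\}}+\mathbf 1_{\{\tau_1^n<T\le\tau_{k+1}^n\}}$. The first term is handled by Step 1. For the second term we apply the strong Markov property at $\tau_1^n$, which gives
\[
\mathbf E_{x_i}\big[\mathbf 1_{\{\tau_1^n<T\}}\,\Psi^n_k\big(T-\tau_1^n,\mathbf Z^{n,z}_{\tau_1^n},\mathbf X^n_{\tau_1^n}\big)\big],\qquad \Psi_k^n(s,z',x'):=\mathbf E_{x'}\big[\varphi(\mathbf Z_s^{n,z'})\mathbf 1_{\{s\le\tau_k^n\}}\big].
\]
The limit has the same structure with $\Psi_k(s,z',j):=\mathbf E_j[\varphi(\bar{\mathbf Z}^{z'}_s)\mathbf 1_{\{s\le \tau_k\}}]$; note that $\Psi_k$ depends on the post-jump position only through its index $j$. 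The induction hypothesis provides the pointwise convergence $\Psi_k^n(s,z',x')\to\Psi_k(s,z',\phi(x'))$.

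To pass to the limit inside the outer expectation we need three things.
\begin{enumerate}
\item \emph{Joint convergence of the pre-jump data.} We show $(\tau_1^n,\mathbf Z^{n,z}_{\tau_1^n})\to(\tau_1,\bar{\mathbf Z}^{z,i}_{\tau_1})$ jointly with the post-jump position. Here $\tau_1^n$ is coupled to $\tau_1$ through the same $U$ and converges a.s. by the argument of Step 1. Moreover $\sup_{t\le T}|\mathbf Z_t^{n,z,i}-\bar{\mathbf Z}_t^{z,i}|\to0$ in probability: Gronwall together with \eqref{hyp:ErgoF} gives this at each fixed time, and the bound $|F_i(z,x)|\le M_i+L_i|z|$ makes the paths equi-Lipschitz, which upgrades it to uniformity in $t\le T$.
\item \emph{Asymptotic independence of the post-jump position.} Conditionally on the pre-jump data, the post-jump position is drawn from $\pi(\mathbf X^n_{\tau_1^n-},\cdot)$. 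Its law must converge to $\int\mu_i(\mathrm dy)b(y)\pi(y,\cdot)/\mu_i(b)$, asymptotically independently of $(\tau_1^n,\mathbf Z^{n}_{\tau_1^n})$; this is the kind of statement the cited paper provides.
\item \emph{Equicontinuity and uniformity.} We need $\Psi_k^n(s,z',x')$ to be equicontinuous in $(s,z')$, and its convergence to be uniform in $x'$ within each component, or at least dominated.
\end{enumerate}

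\textbf{Main obstacle.} Item 3 is the hardest point. Continuity in $z'$ is straightforward: Gronwall gives $|\mathbf Z^{n,z'}_s-\mathbf Z^{n,z''}_s|\le e^{\max L\,s}|z'-z''|$ on $\{s\le\tau_k^n\}$, where $\max L$ is taken over the indices visited, and $\varphi\in C^1_b$. Continuity in $s$ requires controlling the indicator $\mathbf 1_{\{s\le\tau_k^n\}}$ near $s$, which comes from the absence of atoms in the limit law of $\tau_k$, a consequence of the absolute continuity of $G$. Uniformity in the starting point $x'$ is exactly what the $\sup_{t>0}$ in \eqref{hyp:ErgoF} is designed to provide. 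Since the induction hypothesis is stated for a fixed $x_i$, we would first strengthen it to convergence uniform over compact sets of $(s,z')$ and over starting points in each $E_j$. If full uniformity fails, we would integrate against the post-jump law and use dominated convergence, exploiting that the limit depends only on $j$. If $I$ is infinite, an additional tightness argument on the visited indices would also be required.
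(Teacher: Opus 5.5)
Your Step~1 (Proposition~\ref{prop:conv1}, with $U$ integrated out instead of coupled) and the skeleton of Step~2 coincide with the paper: splitting at $\tau_1^n$, applying the strong Markov property, and using the induction hypothesis for the inner function. Your item~1 is also fine. The gap is that the induction step is never carried out. All of its content lies in passing to the limit in $\mathbf E_{x_i}\big[\mathbf 1_{\{\tau_1^n<T\}}\Psi_k^n(T-\tau_1^n,\mathbf Z^{n,z}_{\tau_1^n},\mathbf X^n_{\tau_1^n})\big]$, and your items~2 and~3 only list what this would require.

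Moreover, the route you propose for item~3 is not available. The induction hypothesis gives $\Psi_k^n(s,z',x')\to\Psi_k(s,z',\phi(x'))$ only for each fixed starting point $x'$, and it cannot be strengthened to uniformity over $x'\in E_j$:
\eqref{hyp:ergo}, \eqref{hyp:ErgoF} and \eqref{theo:convSup} are all statements under $\mathbf P_{x_i}$ for a single initial point. The $\sup_{t>0}$ in \eqref{hyp:ErgoF} ranges over the position of the averaging window, not over initial states, and $X$ is not assumed Markov, so nothing converts one into the other.

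Your dominated-convergence fallback does not work as stated either. Both the integrand and the integrating measure (the law of $\mathbf X^n_{\tau_1^n}$) depend on $n$. The map $x'\mapsto\Psi_k^n$ has no regularity, so convergence in law of the post-jump position is of no use. Finally, the other two arguments $T-\tau_1^n$ and $\mathbf Z^{n,z}_{\tau_1^n}$ are random and correlated with $\mathbf X^n_{\tau_1^n}$.

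The missing idea, which is what Steps~2--6 of the paper supply, is to strip away these dependences until only finitely many \emph{fixed} bounded functions of the post-jump position remain.

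(i) Lemma~\ref{lem:majoration1} together with Proposition~\ref{prop:ConvTemps1L1} replaces $\mathbf Z^{n,z}_{\tau_1^n}$ by a limit value. Use $\bar{\mathbf Z}^{z,i}_{\tau_1^n}$, a deterministic function of $\tau_1^n$, so that step (ii) is legitimate. The truncation error on $\{L_T^n>L\}$ is controlled by the convergence in law of $\mathbf I^n_{\tau_j^n}$ from \cite{kagan2025averaging}; this is the ``tightness on visited indices'' you anticipated.

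(ii) The joint law of $(\tau_1^n,\mathbf X^n_{\tau_1^n})$ is written explicitly as $\mathbf E_{x_i}\int_0^T\mathrm dt\,b(X_{nt})\,G'\big(\int_0^tb(X_{ns})\,\mathrm ds\big)\int\pi(X_{nt},\mathrm dy)\,(\cdots)$. Then $G'\big(\int_0^tb(X_{ns})\,\mathrm ds\big)$ is replaced by the deterministic $G'(t\mu_i(b))$.

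(iii) The interval $[0,T]$ is cut into $N$ pieces, and Lemmas~\ref{lem:majoration1} and~\ref{lem:majoration2} freeze the remaining arguments at $t=jT/N$ on each piece. The remaining integrands $y\mapsto\phi^{n,k}_{T-jT/N}(w,y)$ are bounded and converge pointwise by the induction hypothesis at fixed starting point.

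The key point is that, over each piece, the measures $A\mapsto\mathbf E_{x_i}\int b(X_{nt})\,\pi(X_{nt},A)\,\mathrm dt$ converge \emph{setwise}, not merely weakly. This follows from the ergodic theorem applied to $b\,\pi(\cdot,A)\le b$ together with \eqref{hyp:unifintegr}. Against setwise convergent measures, bounded pointwise convergence of the integrands is enough; this is what Lemma~5.1 of \cite{kagan2025averaging} is used for. This single device handles both your item~2 (asymptotic independence of the post-jump position) and item~3 (no uniformity in $x'$). One then lets $n\to\infty$, then $N\to\infty$, and identifies the limit through the strong Markov property of $(\bar{\mathbf Z},\mathbf Y)$ at $\tau_1$.
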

We now establish the convergence of the finite-dimensional distributions of the process $\mathbf Z^n$
\begin{theorem}
\label{th:convFD}
Let $N\geq 1$ and $0<t_1<\ldots<t_N$, then, for all $x\in E$, $z\in\mathbb R^d$, $k\geq1$ and $f_1,\ldots,f_N\in C_b^1(\mathbb R^d)$ we have
\begin{align*}
\mathbf E_x\big[f_1(\mathbf Z_{t_1}^{n,z})\cdots f_N(\mathbf Z_{t_N}^{n,z})\,\mathbf 1_{\{t_N\leq\tau_k^n\}}\big]\underset{n\to\infty}{\longrightarrow}\mathbf E_x\big[f_1(\bar{\mathbf Z}_{t_1}^z)\cdots f_N(\bar{\mathbf Z}_{t_N}^z)\,\mathbf 1_{\{t_N\leq\tau_k\}}\big].
\end{align*}
\end{theorem}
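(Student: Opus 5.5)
We argue by induction on the number $N$ of time points, using Theorem~\ref{th:conv} as the base case and the strong Markov property of $(\mathbf Z^n,\mathbf X^n)$ at the transition times $\tau_j^n$ (Proposition above) to go from $N-1$ to $N$. The case $N=1$ is exactly Theorem~\ref{th:conv}, after extending it from $x_i\in E_i$ to arbitrary $x\in E$, which is immediate since $x\in E_{\phi(x)}$. Throughout we use the fact that Theorem~\ref{th:conv} holds for every $T$, every $k$ and every starting point. Since the proof of Theorem~\ref{th:conv} presumably proceeds by conditioning on the regime path, we expect it to give the slightly stronger joint convergence of $(\mathbf Z^n_{\tau^n_j\wedge T},\mathbf I^n_{\tau_j^n},\tau^n_j)_{j\le k}$ in law. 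We will take this as available from that proof, or reprove it by the same induction over jumps.

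For the inductive step, fix $0<t_1<\dots<t_N$ and $k\ge1$. Split the event $\{t_N\le\tau_k^n\}$ according to the index $j\in\{0,\dots,k-1\}$ such that $\tau_j^n<t_{N-1}\le\tau_{j+1}^n$. On this event we want to condition at time $t_{N-1}$. Time $t_{N-1}$ is not a transition time, so we cannot apply the strong Markov property there directly.

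We handle this by reducing everything to the transition times. Write
\[
\prod_{m=1}^N f_m(\mathbf Z^{n,z}_{t_m})\mathbf 1_{\{t_N\le\tau_k^n\}}
=\sum_{j}\prod_{m\le N-1} f_m(\mathbf Z^{n,z}_{t_m})\,\mathbf 1_{\{\tau_j^n<t_{N-1}\le\tau_{j+1}^n\}}\,f_N(\mathbf Z^{n,z}_{t_N})\mathbf 1_{\{t_N\le\tau_k^n\}}.
\]
Between $\tau_j^n$ and $\tau_{j+1}^n$, the pair $(\mathbf Z^n,\mathbf X^n)$ is the single-regime flow $\mathbf Z^{n,\cdot,i}$ driven by $X_{n\cdot}$, stopped by the clock $G(\int b)$. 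We then apply the strong Markov property at $\tau_j^n$. The conditional expectation given $\mathcal F_{\tau_j^n}$ becomes a function
\[
\Psi_n(s,\zeta,x)=\mathbf E_x\Big[\prod_{m:\,t_m>s} f_m(\mathbf Z^{n,\zeta}_{t_m-s})\,\mathbf 1_{\{t_N-s\le\tau^n_{k-j}\}}\Big],
\]
evaluated at $(\tau_j^n,\mathbf Z^n_{\tau_j^n},\mathbf X^n_{\tau_j^n})$. This is a shifted problem with fewer jumps. Because $f_m\in C^1_b$ and the flows are Lipschitz in the initial condition with constant $e^{L_iT}$ (Gronwall with \eqref{hyp:Lip}), $\Psi_n$ is equicontinuous in $\zeta$. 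Likewise, the limit $\Psi(s,\zeta,i)$ is continuous in $(s,\zeta)$ off a null set of $s$, since $\tau_1$ has a continuous law ($G$ absolutely continuous). We then combine an induction on $k$ with the $N=1$ result. Within a single regime interval, the convergence of $(\mathbf Z^{n}_{t_1},\dots)$ before the first jump follows from \eqref{theo:convSup}: $L^1$ convergence of $\mathbf Z^{n,z,i}_t$ to the deterministic $\bar{\mathbf Z}^{z,i}_t$, uniformly on $[0,T]$, gives joint convergence of the products. Together with the joint convergence of $(\tau^n_1,\mathbf I^n_{\tau_1^n})$ to $(\tau_1,\mathbf I_{\tau_1})$ from \cite{kagan2025averaging}, this yields the claim.

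The main obstacle is the asymptotic decoupling between the stopped flow and the jump clock inside one regime. The flow converges to a deterministic limit in $L^1$, which gives convergence in probability, so the pair (flow values, $\tau^n_1$, post-jump regime) converges jointly. Since $\mathbf Z^{n}_{t_m}$ is asymptotically deterministic, Slutsky's lemma suffices here.

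The remaining delicate point is uniformity: passing from $\Psi_n\to\Psi$ pointwise to convergence evaluated at the random, converging arguments $(\tau_j^n,\mathbf Z^n_{\tau_j^n},\mathbf I^n_{\tau_j^n})$. For this we rely on the Lipschitz equicontinuity in $\zeta$, on the fact that $\Psi_n$ depends on $x$ only through $\phi(x)$ in the limit, and on \eqref{hyp:ErgoF} being uniform in the starting time $t$. Here the assumption $\sup_{t>0}$ is exactly what is needed to make the averaging uniform after a random restart.
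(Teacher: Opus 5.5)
Your architecture (split by jump intervals, apply the strong Markov property of $(\mathbf Z^n,\mathbf X^n)$ at a transition time, use \eqref{theo:convSup} before the first jump, induct on the number of jumps) matches the paper's. The gap is in the step you defer as ``the remaining delicate point'': that is where the work lies, and the tools you name do not do it. After conditioning, $\Psi_n(s,\zeta,y)$ depends on the full post-jump fast state $y=\mathbf X^n_{\tau_1^n}\in E$, not only on $\phi(y)$. Moreover, $\Psi_n(s,\zeta,\cdot)\to\Psi(s,\zeta,\phi(\cdot))$ holds only pointwise on $E$: there is no topology on $E$ and no uniformity in the starting point, and the law of $\mathbf X^n_{\tau_1^n}$ changes with $n$. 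So joint convergence in law of $(\tau_1^n,\mathbf I^n_{\tau_1^n})$ plus Slutsky says nothing about $\mathbf E_{x_i}[\Psi_n(\tau_1^n,\mathbf Z^n_{\tau_1^n},\mathbf X^n_{\tau_1^n})]$. The fact that the limit depends on $x$ only through $\phi(x)$ is exactly what has to be made usable. The $\sup_{t>0}$ in \eqref{hyp:ErgoF} does not help either: it gives uniformity in the time origin along trajectories from a fixed $x_i$, not uniformity over a random restart point drawn from $\pi$ in another component.

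The missing idea is to use the explicit form of the first jump:
$\mathbf E_{x_i}[\mathbf 1_{\{\tau_1^n<T\}}g_n(\tau_1^n,\mathbf X^n_{\tau_1^n})]=\mathbf E_{x_i}[\int_0^T b(X_{nt})\,G'(\int_0^t b(X_{ns})\,\mathrm ds)\int_E\pi(X_{nt},\mathrm dy)\,g_n(t,y)\,\mathrm dt]$.
Then replace $G'(\int_0^t b(X_{ns})\,\mathrm ds)$ by $G'(t\mu_i(b))$ as in \cite{kagan2025averaging}, and freeze $t$ on a grid $jT/N$. Finally apply an averaging lemma that tolerates integrands converging only pointwise in $y$ (Lemma 5.1 of \cite{kagan2025averaging}). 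This gives the limit $\int_0^T G'(t\mu_i(b))\int_E\mu_i(\mathrm dx)\,b(x)\int_E\pi(x,\mathrm dy)\,g(t,y)\,\mathrm dt$, i.e.\ the law of $(\tau_1,\mathbf Y_1)$. These are Steps 2--5 of the proof of Theorem~\ref{th:conv}, reused in Propositions~\ref{prop:convFD1} and~\ref{prop:convFD2}.

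Two supporting claims also need repair.
\begin{itemize}
\item Freezing $t$ requires approximate equicontinuity of $\Psi_n$ in the time variable, uniformly in $n$, not merely a.e.\ continuity of the limit $\Psi$. This is Lemma~\ref{lem:majoration2} and its two-point analogues, whose error terms $\mathbf P_y(s\le\tau_k^n<t)$ vanish in the limit.
\item Equicontinuity in $\zeta$ is not given by $e^{L_iT}$. Once several regimes are visited, the Gronwall constant is $e^{TL_T^n}$ with $L_T^n=\sup_{s\le T}L_{\mathbf I_s^n}$, which is random and unbounded (only $L_i<\infty$ for each $i$ is assumed). You must truncate on $\{L_T^n\le L\}$ and control $\mathbf P(L_T^n>L)$ through the convergence of $\mathbf I^n_{\tau_j^n}$, as in Lemma~\ref{lem:majoration1}.
\end{itemize}

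Conditioning at $\tau_j^n$, the last jump before $t_{N-1}$, makes matters worse. The prefactor $\prod_{t_m\le\tau_j^n}f_m(\mathbf Z^n_{t_m})$ is no longer asymptotically deterministic. You would therefore need joint convergence of the whole past together with $\mathbf X^n_{\tau_j^n}$, which neither Theorem~\ref{th:conv} nor its proof supplies. Your induction on $N$ also never feeds the $(N-1)$-point statement into the inductive step.

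The paper avoids this by always conditioning at $\tau_1^n$. There the pre-jump part converges in $L^1$ to a deterministic flow, and everything after $\tau_1^n$ goes into an induction hypothesis that holds pointwise in the starting point. That pointwise form is exactly the input Lemma 5.1 of \cite{kagan2025averaging} needs.
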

Before proving Theorem \ref{th:conv} and \ref{th:convFD} in Section \ref{subsec:proofConv} and \ref{subsec:proofConvFD}, we establish preliminary convergence results in the next section.

\subsection{Convergence in law of \texorpdfstring{$\mathbf Z^n$}{Zn} in Skorokhod topology}

For all $T > 0$, we let $D([0,T], \mathbb R^d)$ denote as usual the space of càdlàg functions defined on $[0,T]$ and with values in $I$. Using the convergence of the finite-dimensional marginals stated in Theorem~\ref{th:convFD}, we prove the convergence of $(\mathbf Z^{n,z}_t)_{t \in[0,T]}$ to $(\bar{\mathbf{Z}}^z_t)_{t \in[0,T]}$ in law for the Skorokhod topology on $D([0,T], \mathbb R^d)$, for all $z\in\mathbb R^d$ under mild assumption on the limit process $\bar{\mathbf{Z}}^z$. 

\medskip

In the following statement, we say that a probability measure $m$ on $\mathbb R^d$ is \emph{tight} if, for all $\varepsilon>0$, there exists a compact  $K_\varepsilon\subset\mathbb R^d$ such that $m(K)\geq 1-\varepsilon$. In particular, if $\bar{\mathbf Z}^z$ is locally compact, all probability measures are tight.
\begin{theorem}
\label{thm:convSkorokhod}
Let $x\in E$, $z\in\mathbb R^d$ and $T>0$. We assume that $\bar{\mathbf Z}^z$ is non-explosive up to time $T$ under $\mathbf P_x$, i.e $\mathbf P_x(\tau_\infty>T)=1$. Then, $(\mathbf Z^{n,z}_t)_{t \in[0,T]}$ converges to $(\bar{\mathbf{Z}}_t^z)_{t \in[0,T]}$ in law for the Skorokhod topology on $D([0,T], I)$, where $D([0,T], I)$ is equipped with the Skorokhod topology induced by the supremum norm.
\end{theorem}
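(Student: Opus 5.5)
The plan is to combine the finite-dimensional convergence of Theorem~\ref{th:convFD} with a tightness criterion in $D([0,T],\mathbb R^d)$. The first step removes the truncation by $\tau_k$. Since $\mathbf P_x(\tau_\infty>T)=1$, for every $\varepsilon>0$ we can choose $k$ with $\mathbf P_x(\tau_k<T)\le\varepsilon$. Taking $f_1=\dots=f_N=1$ in Theorem~\ref{th:convFD} (with $t_N=T$) gives $\mathbf P_x(T\le\tau_k^n)\to\mathbf P_x(T\le\tau_k)\ge 1-\varepsilon$, so $\limsup_n \mathbf P_x(\tau_k^n<T)\le\varepsilon$. Since the $f_i$ are bounded, we get for all $0<t_1<\dots<t_N\le T$
\[
\limsup_{n\to\infty}\Big|\mathbf E_x\big[f_1(\mathbf Z^{n,z}_{t_1})\cdots f_N(\mathbf Z^{n,z}_{t_N})\big]-\mathbf E_x\big[f_1(\bar{\mathbf Z}^z_{t_1})\cdots f_N(\bar{\mathbf Z}^z_{t_N})\big]\Big|\le 2\prod_i\|f_i\|_\infty\,\varepsilon .
\]
Letting $\varepsilon\to0$ gives convergence of the untruncated finite-dimensional distributions. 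The class $C_b^1$ is convergence determining on $\mathbb R^d$, and this extends to products on $(\mathbb R^d)^N$ by a standard density argument. Hence the finite-dimensional distributions converge, and the point $t=0$ is trivial since $\mathbf Z^{n,z}_0=\bar{\mathbf Z}^z_0=z$.

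The second step is tightness, which we establish in $C([0,T],\mathbb R^d)$ and hence in $D$. All $\mathbf Z^{n,z}$ are continuous, so we use the Arzel\`a--Ascoli / Billingsley criterion: we need control of $\sup_{t\le T}|\mathbf Z^{n,z}_t|$ and of the modulus of continuity. On the event $\{T\le\tau_k^n\}$, the index process visits at most $k$ regimes $i_0,\dots,i_{k-1}$ before $T$. The difficulty is that $I$ may be infinite, so the constants $L_i$ and $M_i$ from \eqref{hyp:Lip} and \eqref{hyp:Fbound} are not uniform. To handle this, we use that $\mathbf I^n_{\tau_j^n}$ converges in law to $\mathbf I_{\tau_j}$ for each $j\le k$; this follows from the construction, as in \cite{kagan2025averaging}. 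Consequently there is a finite (or at least tight) set $I_\varepsilon\subset I$ with $\sup_i L_i\vee M_i=:C_\varepsilon<\infty$ over $I_\varepsilon$, and with $\mathbf P_x(\mathbf I^n_{\tau^n_j}\in I_\varepsilon,\ j<k)\ge 1-\varepsilon$ for all large $n$. If $I$ is countable and these laws are tight this step is immediate; in general it amounts to assuming, or deducing, tightness of $L_{\mathbf I}$ and $M_{\mathbf I}$ along the jump chain.

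On the intersection of the good events above, $|F_{\mathbf I^n_t}(\mathbf Z,\mathbf X^n_t)|\le C_\varepsilon(1+|\mathbf Z|)$. Gronwall's lemma then gives $\sup_{t\le T}|\mathbf Z^{n,z}_t|\le (|z|+C_\varepsilon T)e^{C_\varepsilon T}=:R_\varepsilon$, and hence $|\mathbf Z^{n,z}_t-\mathbf Z^{n,z}_s|\le C_\varepsilon(1+R_\varepsilon)|t-s|$. The paths are therefore uniformly Lipschitz on an event of probability at least $1-3\varepsilon$ for all large $n$. This yields tightness in $C([0,T],\mathbb R^d)$, and every limit point is continuous.

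Finally, tightness together with convergence of finite-dimensional distributions at all times identifies the limit as the law of $\bar{\mathbf Z}^z$. Since the limit is continuous, convergence in $C$ with the sup norm coincides with $J_1$ convergence in $D([0,T],\mathbb R^d)$, which gives the statement. The main obstacle is the uniform-in-regime control of the Lipschitz and boundedness constants over the regimes visited. I would first try to obtain it from the convergence in law of the embedded chain $(\mathbf I^n_{\tau^n_j})_{j\le k}$ to $(\phi(Y_j))_{j\le k}$, which makes the random constant $\max_{j<k}(L+M)_{\mathbf I^n_{\tau_j^n}}$ tight. If this cannot be deduced from the earlier results, I would add a uniform bound $\sup_i(L_i+M_i)<\infty$ as an assumption.
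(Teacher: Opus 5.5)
Your proposal is correct and follows essentially the paper's route. You get finite-dimensional convergence from Theorem~\ref{th:convFD}, with the truncation removed via $\mathbf P_x(\tau_\infty>T)=1$, and tightness from the Lipschitz-in-time bound on $\{T\le\tau_k^n,\,L_T^n\le L,\,M_T^n\le M\}$, with the bad events controlled through the convergence of $\mathbf I^n_{\tau_j^n}$ to $\mathbf I_{\tau_j}$ and the finiteness of each $L_i$, $M_i$. The paper instead phrases tightness via the Ethier--Kurtz $w'$ criterion and obtains compact containment from the finite-dimensional limit rather than from Gronwall, but this difference is cosmetic.

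Your fallback assumption $\sup_i(L_i+M_i)<\infty$ is unnecessary. The convergence borrowed from \cite{kagan2025averaging} holds for all bounded measurable test functions on $I$, so taking $I_\varepsilon=\{i:\,L_i\vee M_i\le C\}$ directly gives $\limsup_n\mathbf P_x(\mathbf I^n_{\tau^n_j}\notin I_\varepsilon)\le\mathbf P_x(L_{\mathbf I_{\tau_j}}\vee M_{\mathbf I_{\tau_j}}>C)$, which tends to $0$ as $C\to\infty$.
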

Before turning to the proof of this result, we recall that if $(\mathbf Z_t^{n,z})_{t \in[0,T]}$ converges to $(\mathbf Z_t^z)_{t \in[0,T]}$ in law for the Skorokhod topology on $D([0,T], \mathbb R^d)$ for all $T>0$, then $(\mathbf Z_t^{n,z})_{t \geq 0}$ converges to $(\mathbf{Z}_t^z)_{t \geq 0}$ in law for the Skorokhod topology on $D([0,+\infty), \mathbb R^d)$. As a consequence, if $\mathbf Z^z$ is non-explosive, i.e $\mathbf P_i( \tau_{\infty} = + \infty) = 1$ and if  for all $t \geq 0$, the law of $\mathbf Z_t^z$ is tight under $\mathbb P_i$ for all $i \in I$, then  $(\mathbf Z_t^{n,z})_{t \geq 0}$ converges to $(\mathbf Z_t^z)_{t \geq 0}$ in law for the Skorokhod topology on $D([0,+\infty), \mathbb R^d)$.

\begin{proof}
By Theorem \ref{th:convFD}, the finite-dimensional distributions of $\mathbf Z^{n,z}$ up to the $k$-th jump time converge in law to those of $\bar{\mathbf Z}^z$ up to the $k$-th jump time. Since $\mathbf 1_{\{\tau_k>T\}}\to1$ when $k\to\infty$ and the processes $\mathbf Z^{n,z}$ have continuous sample paths, it remains to establish the tightness of $\mathbf Z^{n,z}$ in $D([0,T],\mathbb R^d)$. In order to so, we make use of Theorem~7.2, p.128 in~\cite{EthierKurtz2005}. This result makes use of the following definition: for all $\delta>0$,
\[
w'(\mathbf Z^{n,z},\delta):=\inf_{\{t_i\}}\max_{1\leq i\leq k} w_{\mathbf I^n}[t_{i-1},t_i),
\]
with $w_{\mathbf I^n}[t_{i-1},t_i)$ the modulus of continuity of $I^n$ over $[t_{i-1},t_i)$ and the $\{t_i\}$ range over all partitions of the form $0=t_0<t_1<\cdots <t_{k-1} < T\leq t_k$ with steps  stricktly greater than $\delta$. According to the above cited result, the tightness property is ensured if, for all $\varepsilon>0$ and all $t\in [0,T]$, there exists a compact set $K_{\varepsilon,t}\subset\mathbb R^d$ such that
\begin{align}
    \label{eq:compactcontainment}
    \limsup_{n\to+\infty} \mathbf P(\mathbf Z^{n,z}_t\in K_{\varepsilon,t})\geq 1-\varepsilon
\end{align}
and there exists $\delta>0$ such that
\begin{align}
    \label{eq:propertyiEthierKurtz}
    \limsup_{n\to+\infty} \mathbf P(w'(\mathbf Z^{n,z},\delta)>\varepsilon)\leq \varepsilon.
\end{align}
In order to check the first property, choose $k_{\varepsilon,t}>0$ such that 
\[
\mathbf P\big(\lvert\bar{\mathbf Z}_t^z\rvert\leq k_{\varepsilon,t}\big)\geq 1-\varepsilon/2,
\]
and $k_{\varepsilon}\geq 1$ 
\[
\mathbf P(T\leq \tau_{k_\varepsilon})\geq 1-\varepsilon/2.
\]
Then, according to Theorem~\ref{th:convFD}, 
\[
\lim_{n\to\infty} \mathbf P(\lvert\mathbf Z^{n,z}_t\rvert\leq k_{\varepsilon,t},\,t\leq \tau_{{k_\varepsilon}}^n)\geq\mathbf E\big[\varphi_{k_{\varepsilon,t}}(\bar{\mathbf Z}_t^z)\mathbf 1_{\{t\leq \tau_{k_\varepsilon}\}}\big]\geq 1-\varepsilon,
\]
where $(\varphi_k)_{k\geq0}$ is a non-decreasing sequence of function such that $\varphi_k\in C_b^1(\mathbb R^d)$, $\mathbf 1_{\lvert z\rvert\leq k-1\}}\leq\varphi_k(z)\leq\mathbf 1_{\{\lvert z\rvert\leq z\}}$ and $\varphi_k\underset{k\to\infty}{\longrightarrow}\mathbf 1_{[0,\infty)}$ which proves~\eqref{eq:compactcontainment}.

\medskip

Let us now check~\eqref{eq:propertyiEthierKurtz}.Let $T>0$, we set $n_\delta=\lfloor T/2\delta\rfloor$ and we divide the interval $[0,T]$ into $n_\delta$ subsets of equal size i.e. we take the subdivision $t_i=iT/n_\delta$ for $0\leq i\leq n_\delta$. Then, for all $x\in E$, $z\in\mathbb R^d$ and $k\geq1$
\begin{align*}
\mathbf P_x\big(\sup_{t_{i-1}\leq s<t\leq t_i}\lvert\mathbf Z_s^{n,z}-\mathbf Z_t^{n,z}\rvert\geq\varepsilon&,\,1\leq i\leq n_\delta\big)\\
&=\mathbf P_x\big(\sup_{t_{i-1}\leq s<t\leq t_i}\lvert\mathbf Z_s^{n,z}-\mathbf Z_t^{n,z}\rvert\geq\varepsilon,\,T\leq\tau_k^n,\,1\leq i\leq n_\delta\big)\\
&+\mathbf P_x\big(\sup_{t_{i-1}\leq s<t\leq t_i}\lvert\mathbf Z_s^{n,z}-\mathbf Z_t^{n,z}\rvert\geq\varepsilon,\,\tau_k^n<T,\,1\leq i\leq n_\delta\big)
\end{align*}
For the first term above we have, for all $L,M>0$ 
\begin{align*}
\mathbf P_x\big(\sup_{t_{i-1}\leq s<t\leq t_i}&\lvert\mathbf Z_s^{n,z}-\mathbf Z_t^{n,z}\rvert\geq\varepsilon,\,T\leq\tau_k^n,\,1\leq i\leq n_\delta\big)\\
&\leq \mathbf P_x\big(\sup_{t_{i-1}\leq s<t\leq t_i}\lvert\mathbf Z_s^{n,z}-\mathbf Z_t^{n,z}\rvert\geq\varepsilon,\,T\leq\tau_k^n,\, L_T^n\leq L,\,M_T^n\leq M,\,1\leq i\leq n_\delta\big)\\
&+\mathbf P_x(T\leq\tau_k^n,\,L_T^n>L)+\mathbf P_x(T\leq\tau_k^n,\,M_T^n>M).
\end{align*}
We remind that on the event $\{T\leq\tau_k^n,\,L_t^n\leq L,\,M_T^n\leq M\}$, we have
\begin{align*}
\lvert\mathbf Z_s^{n,z}-\mathbf Z_t^{n,z}\rvert&\leq(t-s)\,\Big(L\,\big(\lvert z\rvert+TM)\,e^{TL}+M\Big)\\
&\leq\frac{C_T}{n_\delta}
\end{align*}
for $t_{i-1}\leq s<t\leq t_i\leq T$ with $C_T:=T\,(L\,(\lvert z\rvert+T\,M)\,e^{TL}+M)$. Then,
\begin{align*}
\mathbf P_x\big(\sup_{t_{i-1}\leq s<t\leq t_i}\lvert&\mathbf Z_s^{n,z}-\mathbf Z_t^{n,z}\rvert\geq\varepsilon,\,T\leq\tau_k^n,\,1\leq i\leq n_\delta\big)\\
&\leq\mathbf P_x(C_T\geq n_\delta\,\varepsilon)+\mathbf P_x(T\leq\tau_k^n,\,L_T^n>L)+\mathbf P_x(T\leq\tau_k^n,\,M_T^n>M).
\end{align*}
We remind that
\begin{align*}
\limsup_{n\to\infty}\mathbf P_x(T\leq\tau_k^n,L_T^n>L)&\leq\sum_{j=0}^k\mathbf P_x(L_{\mathbf I_{\tau_j}}>L),
\end{align*}
and 
\begin{align*}
\limsup_{n\to\infty}\mathbf P_x(T\leq\tau_k^n,\,M_T^n>M)&\leq\sum_{j=0}^k\mathbf P_x(M_{\mathbf I_{\tau_j}}>M).
\end{align*}
Then, for all $L,M>0$
\begin{align*}
\limsup_{n\to\infty}\mathbf P_x\big(\sup_{t_{i-1}\leq s<t\leq t_i}\lvert\mathbf Z_s^{n,z}&-\mathbf Z_t^{n,z}\rvert\geq\varepsilon,\,T\leq\tau_k^n,\,1\leq i\leq n_\delta\big)\\
&\leq\mathbf P_x(C_T\geq n_\delta\,\varepsilon)+\sum_{j=0}^k\big(\mathbf P_x(L_{\mathbf I_{\tau_j}}>L)+\mathbf P_x(M_{\mathbf I_{\tau_j}}>M)\big).
\end{align*}
Since $L_i<\infty$ and $M_i<\infty$ for all $i\in I$, as $L,M$ tends to infinity, we have 
\begin{align*}
\limsup_{n\to\infty}\mathbf P_x\big(\sup_{t_{i-1}\leq s<t\leq t_i}\lvert\mathbf Z_s^{n,z}-\mathbf Z_t^{n,z}\rvert\geq\varepsilon,\,T\leq\tau_k^n,\,1\leq i\leq n_\delta\big)&\leq\mathbf P_x(C_T\geq n_\delta\,\varepsilon).
\end{align*}
Finally
\begin{align*}
\limsup_{n\to\infty}\mathbf P_x\big(\sup_{t_{i-1}\leq s<t\leq t_i}\lvert\mathbf Z_s^{n,z}-\mathbf Z_t^{n,z}\rvert\geq\varepsilon,\,T\leq\tau_k^n,\,1\leq i\leq n_\delta\big)\underset{\delta\to0}{\longrightarrow}0.
\end{align*}
For the second term 
\begin{align*}
\mathbf P_x\big(\sup_{t_{i-1}\leq s<t\leq t_i}\lvert\mathbf Z_s^{n,z}-\mathbf Z_t^{n,z}\rvert\geq\varepsilon,\,\tau_k^n<T,\,1\leq i\leq n_\delta\big)\leq\mathbf P_x(\tau_k^n<T),
\end{align*}
hence, for all $k\geq0$ 
\begin{align*}
\limsup_{n\to\infty}\mathbf P_x\big(\sup_{t_{i-1}\leq s<t\leq t_i}\lvert\mathbf Z_s^{n,z}-\mathbf Z_t^{n,z}\rvert\geq\varepsilon,\,\tau_k^n<T,\,1\leq i\leq n_\delta\big)\leq\mathbf P_x(\tau_k<T),
\end{align*}
we deduce that~\eqref{eq:propertyiEthierKurtz} holds true which concludes the proof of Theorem~\ref{thm:convSkorokhod}.
\end{proof}

\subsection{Explosion/non-explosion properties}
Since the processes considered in the previous section are defined iteratively, they are only defined up to their explosion time $\tau_\infty^n:=\lim_{k\to\infty} \tau_k^n$. Similarly, the process $\mathbf I$ is defined up to its explosion time $\tau_\infty:=\lim_{k\to\infty} \tau_k$. In this section, we investigate the relation between the explosion time of the accelerated processes and of the limiting process.

\begin{proposition}
Let $T>0$ a strictly positive real number and $x_i\in E_i$ where $i\in I$.  Then
\begin{equation}
\label{eq:exploineq}
    \liminf_{n\to\infty}\mathbf E_{x_i}\big[\varphi(\mathbf Z_t^{n,z})\,\mathbf 1_{\{t<\tau_\infty^n\}}\big]\geq\mathbf E_i\big[\varphi(\bar{\mathbf Z}_t^z)\,\mathbf 1_{\{t<\tau_\infty\}}\big]\quad\forall\varphi\in C_b^1(Z).
\end{equation}
If in addition $\mathbf I$ is non-explosive under $\mathbf P_i$, i.e. $\mathbf P_i(\tau_\infty=\infty)=1$, then
\[
\mathbf E_{x_i}\big[\varphi(\mathbf Z_t^{n,z})\,\mathbf 1_{\{t<\tau_\infty^n\}}\big]\underset{n\to\infty}{\longrightarrow}\mathbf E_i\big[\varphi(\bar{\mathbf Z}_t^z)\big]\quad\forall\varphi\in C_b^1(Z).
\]
\end{proposition}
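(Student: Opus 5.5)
The plan is to obtain the inequality \eqref{eq:exploineq} from Theorem~\ref{th:conv} by monotone approximation in $k$, reducing first to nonnegative test functions. Since $\varphi\in C_b^1(Z)$ is bounded, I set $\psi:=\varphi+\|\varphi\|_\infty\ge 0$, which also lies in $C_b^1(Z)$. I will prove the inequality for $\psi$ and for the constant function $1$, and then use that
\[
\mathbf 1_{\{t<\tau_\infty^n\}}=\lim_{k}\mathbf 1_{\{t\le\tau_k^n\}}.
\]

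\textbf{Liminf inequality.} Fix $k\ge1$. Since $\tau_k^n\le\tau_\infty^n$, and $\tau_k^n<\tau_\infty^n$ whenever $\tau_\infty^n<\infty$, we have $\mathbf 1_{\{t\le\tau_k^n\}}\le\mathbf 1_{\{t<\tau_\infty^n\}}$ up to a null set; the case $\tau_k^n=\tau_\infty^n$ only occurs when both are infinite. Because $\psi\ge0$,
\[
\mathbf E_{x_i}\big[\psi(\mathbf Z_t^{n,z})\mathbf 1_{\{t<\tau_\infty^n\}}\big]\ge \mathbf E_{x_i}\big[\psi(\mathbf Z_t^{n,z})\mathbf 1_{\{t\le\tau_k^n\}}\big].
\]
Taking $\liminf_n$ and applying Theorem~\ref{th:conv} with $T=t$ bounds the left side from below by $\mathbf E_i[\psi(\bar{\mathbf Z}_t^z)\mathbf 1_{\{t\le\tau_k\}}]$. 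Letting $k\to\infty$, monotone convergence gives $\mathbf 1_{\{t\le\tau_k\}}\uparrow\mathbf 1_{\{t<\tau_\infty\}}$, where I use $\tau_k<\tau_\infty$ on $\{\tau_\infty<\infty\}$ for semi-Markov jumps with a.s. positive holding times. This yields \eqref{eq:exploineq} for $\psi$.

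\textbf{Transfer to a general $\varphi$.} The main obstacle is that liminf inequalities do not subtract. For a signed $\varphi$ this step is genuinely unclear to me, and I would first check whether the statement is really intended for nonnegative $\varphi$. The natural route is to apply the argument to both $\|\varphi\|_\infty\pm\varphi$. This gives
\[
\liminf_n \mathbf E[\pm\varphi\,\mathbf 1_{\{t<\tau^n_\infty\}}]\ge \mathbf E[\pm\varphi\,\mathbf 1_{\{t<\tau_\infty\}}] \;-\;\|\varphi\|_\infty\,\big(\limsup_n\mathbf P(t<\tau_\infty^n)-\mathbf P(t<\tau_\infty)\big),
\]
so the error is controlled by the excess mass of non-explosion for the accelerated process. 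If the accelerated process happens to explode earlier than the limit, this excess is zero and the inequality transfers cleanly to all of $C_b^1$. Otherwise I would settle for stating \eqref{eq:exploineq} for nonnegative $\varphi$, which is what the monotonicity argument actually delivers.

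\textbf{Non-explosive case.} If $\mathbf P_i(\tau_\infty=\infty)=1$, I use a sandwich argument. First, apply \eqref{eq:exploineq} with $\varphi\equiv1$. The left side is at most $1$ and the right side equals $1$, so $\mathbf P_{x_i}(t<\tau_\infty^n)\to1$. Similarly, for each fixed $k$, Theorem~\ref{th:conv} with $\varphi=1$ gives $\mathbf P_{x_i}(t\le\tau_k^n)\to\mathbf P_i(t\le\tau_k)$, and this is close to $1$ for $k$ large. Then
\[
\big|\mathbf E_{x_i}[\varphi(\mathbf Z^{n,z}_t)\mathbf 1_{\{t<\tau^n_\infty\}}]-\mathbf E_{x_i}[\varphi(\mathbf Z^{n,z}_t)\mathbf 1_{\{t\le\tau_k^n\}}]\big|\le\|\varphi\|_\infty\,\mathbf P_{x_i}(t<\tau_\infty^n,\ \tau_k^n<t),
\]
which is at most $\|\varphi\|_\infty\big(1-\mathbf P_{x_i}(t\le\tau_k^n)\big)$. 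Its $\limsup_n$ is therefore at most $\|\varphi\|_\infty\mathbf P_i(\tau_k<t)$. Combining this with Theorem~\ref{th:conv} for the truncated term, and with $\mathbf E_i[\varphi(\bar{\mathbf Z}^z_t)\mathbf 1_{\{t\le\tau_k\}}]\to\mathbf E_i[\varphi(\bar{\mathbf Z}^z_t)]$ as $k\to\infty$, gives the claimed limit for every $\varphi\in C_b^1(Z)$. The sandwich step does not require $\varphi\ge0$.
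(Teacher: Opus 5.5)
Your route is the paper's route. For $\varphi\ge0$ you combine Theorem~\ref{th:conv} at fixed $k$ with $\mathbf 1_{\{t\le\tau_k^n\}}\le\mathbf 1_{\{t<\tau_\infty^n\}}$ and monotone convergence in $k$. In the non-explosive case you use the same sandwich, with error term $\|\varphi\|_\infty\,\mathbf P_{x_i}(\tau_k^n<t)$, and this works for every $\varphi\in C_b^1(Z)$, signed or not.

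The only divergence is the sign of $\varphi$, and your caution there is justified. The paper simply writes ``without loss of generality we can assume that $\varphi$ is positive''. As you observe, adding $\|\varphi\|_\infty$ does not preserve a one-sided $\liminf$ bound. In fact the first assertion is false for signed $\varphi$: with $\varphi\equiv-1$ it would force $\limsup_n\mathbf P_{x_i}(t<\tau_\infty^n)\le\mathbf P_i(t<\tau_\infty)$.

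A counterexample satisfying all the standing assumptions is the following. Take $E_i=\{a_i,c_i\}$ with $X$ flipping between $a_i$ and $c_i$ at rate $1$, $b(a_i)=0$, $b(c_i)=2i^2$, $G(x)=1-e^{-x}$, $\pi(x,\cdot)=\delta_{a_{i+1}}$ for $x\in E_i$, and $F\equiv0$.
\begin{itemize}
\item In the limit, $\mu_i(b)=i^2$, so $\mathbf I$ is a pure birth process with rates $i^2$. Then $\mathbf E_1[\tau_\infty]=\sum_i i^{-2}<\infty$, hence $\mathbf P_1(t<\tau_\infty)<1$ for large $t$.
\item For the accelerated process, $\mathbf I^n$ cannot jump before $X^n$ first leaves $a_i$. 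That exit takes an $\mathrm{Exp}(n)$ time, independent across blocks by the piecing-out construction, so $\tau_\infty^n=\infty$ almost surely.
\end{itemize}
So your version is the correct statement: \eqref{eq:exploineq} holds for $\varphi\ge0$ only, and full convergence holds in the non-explosive case for all $\varphi$.
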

\begin{proof}
Let $\varphi:Z\longrightarrow\mathbb R$ a bounded derivable function and $T>0$ a strictly positive real number. Without loss of generality we can assume that $\varphi$ is positive. For all $k\geq0$, $i\in I$ and $x_i\in E_i$ we have, according to Theorem~\ref{th:conv},
\[
\mathbf E_{x_i}\big[\varphi(\mathbf Z_T^{n,z})\,\mathbf 1_{\{\tau_k^n>T\}}\big]\underset{n\to\infty}{\longrightarrow}\mathbf E_i\big[\varphi(\bar{\mathbf Z}_T^z)\,\mathbf 1_{\{\tau_k>T\}}\big].
\]
Reminding that for all $n\geq1$, the jumps sequence $(\tau_k^n)_{k\geq0}$ is increasing, by the monotone convergence theorem we have both
\[
\mathbf E_{x_i}\big[\varphi(\mathbf Z_T^{n,z})\,\mathbf 1_{\{\tau_k^n>T\}}\big]\underset{k\to\infty}{\longrightarrow}\mathbf E_{x_i}\big[\varphi(\mathbf Z_T^{n,z})\,\mathbf 1_{\{\tau_\infty^n>T\}}\big]\quad\forall n\geq1,
\]
and 
\[
\mathbf E_i\big[\varphi(\bar{\mathbf Z}_T)\,\mathbf 1_{\{\tau_k>T\}}\big]\underset{k\to\infty}{\longrightarrow}\mathbf E_i\big[\varphi(\bar{\mathbf Z}_T^z)\,\mathbf 1_{\{\tau_\infty>T\}}\big].
\]
Then, for all $\varepsilon>0$, there exists $k_\varepsilon\geq0$ such that
\[
\mathbf E_i\big[\varphi(\bar{\mathbf Z}_T^z)\,\mathbf 1_{\{\tau_k>T\}}\big]\geq\mathbf E_i\big[\varphi(\bar{\mathbf Z}_T^z)\,\mathbf 1_{\{\tau_\infty>T\}}\big]-\varepsilon/2\quad\forall k\geq k_\varepsilon,
\]
and 
\begin{align*}
\mathbf E_{x_i}\big[\varphi(\mathbf Z_T^{n,z})\,\mathbf 1_{\{\tau_{k_\varepsilon}^n>T\}}\big]&\geq\mathbf E_i\big[\varphi(\bar{\mathbf Z}_T^z)\,\mathbf 1_{\{\tau_{k_\varepsilon}>T\}}\big]-\varepsilon/2\\
&\geq\mathbf E_i\big[\varphi(\bar{\mathbf Z}_T^z)\,\mathbf 1_{\{\tau_\infty>T\}}\big]-\varepsilon.
\end{align*}
Using again that $(\tau_k^n)_{k\geq0}$ is increasing for all $n\geq1$ we have
\begin{align*}
\mathbf E_{x_i}\big[\varphi(\mathbf Z_T^{n,z})\,\mathbf 1_{\{\tau_{\infty}^n>T\}}\big]&\geq\mathbf E_{x_i}\big[\varphi(\mathbf Z_T^{n,z})\,\mathbf 1_{\{\tau_{k_\varepsilon}^n>T\}}\big]\\
&\geq\mathbf E_i\big[\varphi(\bar{\mathbf Z}_T^z)\,\mathbf 1_{\{\tau_\infty>T\}}\big]-\varepsilon,
\end{align*}
so
\[
\liminf_{n\to\infty}\mathbf E_{x_i}\big[\varphi(\mathbf Z_T^{n,z})\,\mathbf 1_{\{\tau_{\infty}^n>T\}}\big]\geq\mathbf E_i\big[\varphi(\bar{\mathbf Z}_T^z)\,\mathbf 1_{\{\tau_\infty>T\}}\big],
\]
which conclude the proof of~\eqref{eq:exploineq}.
Let us now consider the case where $\mathbf I$ is non-explosive, i.e. $\mathbb P_i(\tau_\infty=\infty)=1$. Let $\varepsilon>0$, there exists $k_\varepsilon$ such that for all $k\geq k_\varepsilon$
\begin{equation}
\label{ineq:proofexplosion}
\Big\lvert\mathbf E_i\big[\varphi(\mathbf Z_T^{n,z})\,\mathbf 1_{\{T>\tau_k\}}-\varphi(\bar{\mathbf Z}_T^z)\,\mathbf 1_{\{T>\tau_\infty\}}\big]\Big\rvert\leq\varepsilon.
\end{equation}
We decompose 
\begin{align*}
\mathbf E_{x_i}\big[\varphi(\mathbf Z_T^{n,z})\,\mathbf 1_{\{\tau_\infty^n>T\}}\big]&=\mathbf E_{x_i}\big[f(\mathbf Z_T^{n,z})\,\mathbf 1_{\{\tau_k^n>T\}}\big]+\mathbf E_{x_i}\big[\varphi(\mathbf Z_T^{n,z})\,\mathbf 1_{\{\tau_\infty^n>T\geq\tau_k^n\}}\big]\\
&\leq\mathbf E_{x_i}\big[\varphi(\mathbf Z_T^{n,z})\,\mathbf 1_{\{\tau_k^n>t\}}\big]+\|\varphi\|_\infty\,\mathbf P_{x_i}\big(\tau_k^n\leq T\big).
\end{align*}
Since
\[
\mathbf P_{x_i}\big(\tau_k^n>T\big)\underset{n\to\infty}{\longrightarrow}\mathbf P_i\big(\tau_k>T\big)\underset{k\to\infty}{\longrightarrow}1,
\]
increasing $k_\varepsilon$ if necessary, there exists $n_\varepsilon$ such that
\[
\mathbf P_{x_i}\big(\tau_{k_\varepsilon}^n>T\big)\geq 1-\varepsilon\quad\forall n\geq n_\varepsilon.
\]
Hence
\begin{align*}
\Big\lvert\mathbf E_{x_i}\big[\varphi(\mathbf Z_T^{n,z})\,\mathbf 1_{\{\tau_\infty^n>T\}}-\mathbf E_{x_i}\big[\varphi(\mathbf Z_T^{n,z})\,\mathbf 1_{\{\tau_{k_\varepsilon}^n>T\}}\big]\Big\rvert&\leq\|\varphi\|_\infty\,\mathbf P_{x_i}\big(\tau_{k_\varepsilon}^n\leq T\big)\\
&\leq\|\varphi\|_\infty\,\varepsilon.
\end{align*}
and, according to Theorem~\ref{th:conv}and up to a change of $n_\epsilon$,
\[
\Big|\mathbf E_{x_i}\big[\varphi(\mathbf Z_T^{n,z})\,\mathbf 1_{\{\tau_{k_\varepsilon}^n>T\}}-\varphi(\bar{\mathbf Z}_T^z)\,\mathbf 1_{\{\tau_{k_\varepsilon}>T\}}\big]\Big|\leq \varepsilon,\ \forall n\geq n_\varepsilon.
\]
By addition and substraction,
\begin{align*}
\Big\lvert\mathbf E_{x_i}\big[\varphi(\mathbf Z_T^{n,z})\,\mathbf 1_{\{\tau_\infty^n>T\}}-\varphi(\bar{\mathbf Z}_T^z)\,\mathbf 1_{\{\tau_\infty>T\}}\big]\Big\rvert&\leq\Big\lvert\mathbf E_{x_i}\big[\varphi(\mathbf Z_T^{n,z})\,\mathbf 1_{\{\tau_\infty^n>T\}}-\varphi(\mathbf Z_T^{n,z})\,\mathbf 1_{\tau_{k_\varepsilon}^n>T\}}\big]\Big\rvert\\
&+\Big\lvert\mathbf E_{x_i}\big[\varphi(\mathbf Z_T^{n,z})\,\mathbf 1_{\{\tau_{k_\varepsilon}^n>T\}}-\varphi(\bar{\mathbf Z}_T^z)\,\mathbf 1_{\{\tau_k>T\}}\big]\Big\rvert\\
&+\Big\lvert\mathbf E_{x_i}\big[\varphi(\bar{\mathbf Z}_T^z)\,\mathbf 1_{\{\tau_{k}>T\}}-\varphi(\bar{\mathbf Z}_T^z)\,\mathbf 1_{\{\tau_\infty>T\}}\big]\Big\rvert
\end{align*}
so, using \eqref{ineq:proofexplosion}, we have
\[
\limsup_{n\to\infty}\Big\lvert\mathbf E_{x_i}\big[\varphi(\mathbf Z_T^{n,z})\,\mathbf 1_{\{\tau_\infty^n>T\}}-\varphi(\bar{\mathbf Z}_T^z)\,\mathbf 1_{\{\tau_\infty>T\}}\big]\Big\rvert\leq\big(2+\|\varphi\|_\infty\big)\,\varepsilon.
\]
Finally 
\[
\mathbf E_{x_i}\big[\varphi(\mathbf Z_T^{n,z})\,\mathbf 1_{\{\tau_\infty^n>T\}}\big]\underset{n\to\infty}{\longrightarrow}\mathbf E_i\big[\varphi(\bar{\mathbf Z}_T^z)\,\mathbf 1_{\{\tau_\infty>T\}}\big]=\mathbf E_i\big[\varphi(\bar{\mathbf Z}_T^z)\big].
\]
\end{proof}

\section{First properties of convergence}

\label{sec:FirstProperties}

Using \eqref{theo:convSup}, we establish the convergence in law of $\mathbf Z^n$ before the first jump of $\mathbf I^n$:
\begin{proposition}
\label{prop:conv1}
Let $z\in\mathbb R^d$, then for all $i\in I$, $x_i\in E_i$ and $T>0$, if assumptions \eqref{hyp:Lip} and \eqref{hyp:ErgoF} are satisfied and $F_i(z,\,\cdot\,)\in\mathcal A_i$, we have
\begin{align*}
\mathbf E_{x_i}\big[\varphi(\mathbf Z_T^{n,z})\,\mathbf 1_{\{T\leq\tau_1^n\}}\big]\underset{n\to\infty}{\longrightarrow}\mathbf E_{x_i}\big[\varphi(\bar{\mathbf Z}_T^z)\,\mathbf 1_{\{T\leq\tau_1\}}\big],\quad\forall\varphi\in C_b^1(Z).
\end{align*}
\end{proposition}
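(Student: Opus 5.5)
Before the first jump, $\mathbf I^n_t=i$ and $\mathbf X^n_t=X_{nt}$. So on $\{T\le\tau_1^n\}$ we have $\mathbf Z_T^{n,z}=\mathbf Z_T^{n,z,i}$, where the latter is driven by the same trajectory of $X$ under $\mathbf P_{x_i}$. In the same way, $\bar{\mathbf Z}_T^z=\bar{\mathbf Z}_T^{z,i}$ on $\{T\le\tau_1\}$, and this is deterministic. The construction also lets us write
\[
\tau_1^n=\inf\Big\{t\ge0:\;G\Big(\tfrac1n\int_0^{nt}b(X_s)\,\mathrm ds\Big)\ge U\Big\}
\]
with $U$ uniform and independent of $X$. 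Indeed, the accelerated rate integral is $\int_0^t b(X_{ns})\,\mathrm ds=\frac1n\int_0^{nt}b(X_s)\,\mathrm ds$. Since $\bar{\mathbf Z}^{z,i}_T$ is deterministic, the target is $\varphi(\bar{\mathbf Z}_T^{z,i})\,\mathbf P(G(T\mu_i(b))\ge U)$, up to the convention on ties. I therefore split
\[
\mathbf E_{x_i}\big[\varphi(\mathbf Z_T^{n,z,i})\mathbf 1_{\{T\le\tau_1^n\}}\big]-\varphi(\bar{\mathbf Z}_T^{z,i})\mathbf P(T\le\tau_1)
=\mathbf E_{x_i}\big[(\varphi(\mathbf Z_T^{n,z,i})-\varphi(\bar{\mathbf Z}_T^{z,i}))\mathbf 1_{\{T\le\tau_1^n\}}\big]+\varphi(\bar{\mathbf Z}_T^{z,i})\big(\mathbf P_{x_i}(T\le\tau_1^n)-\mathbf P(T\le\tau_1)\big).
\]

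\textbf{First term.} Since $\varphi\in C_b^1$, it is Lipschitz with constant $\|\nabla\varphi\|_\infty$. The first term is therefore bounded in absolute value by $\|\nabla\varphi\|_\infty\,\mathbf E_{x_i}|\mathbf Z_T^{n,z,i}-\bar{\mathbf Z}_T^{z,i}|$. This tends to $0$ by the Khasminskii result \eqref{theo:convSup}. That result applies under \eqref{hyp:Lip} and \eqref{hyp:ErgoF} with $F_i(z,\cdot)\in\mathcal A_i$, which are exactly the hypotheses of the proposition. Here \eqref{hyp:Lip} lets us pass between $F_i(0,\cdot)$ and $F_i(z,\cdot)$ in the integrability condition.

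\textbf{Second term.} It remains to show $\mathbf P_{x_i}(T\le\tau_1^n)\to\mathbf P(T\le\tau_1)$. By the definition of $\tau_1^n$ and monotonicity of $t\mapsto G(\cdot)$, we have $\{T\le\tau_1^n\}\supset\{G(\frac1n\int_0^{nT}b(X_s)\,\mathrm ds)<U\}$, and $\{T\le\tau_1^n\}$ is contained in the corresponding event with $\le$. By Remark~\ref{rem:UI-implies-L1mu}, namely \eqref{eq:ergob}, $\frac1n\int_0^{nT}b(X_s)\,\mathrm ds\to T\mu_i(b)$ $\mathbf P_{x_i}$-a.s. Since $G$ is continuous (it is absolutely continuous), $G(\frac1n\int_0^{nT}b)\to G(T\mu_i(b))$ a.s. Since $U$ is independent and uniform, $\mathbf P(U=G(T\mu_i(b)))=0$. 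Dominated convergence then gives
\[
\mathbf P_{x_i}(T\le\tau_1^n)\to\mathbf P\big(U>G(T\mu_i(b))\big)=\mathbf P(T\le\tau_1).
\]

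\textbf{Main obstacle.} I expect the delicate point to be the identification step: that $\mathbf Z^{n,z}$ coincides with $\mathbf Z^{n,z,i}$ up to $\tau_1^n$ on a common probability space carrying both $X$ and $U$. This follows from the piecing-out construction, item 3 of the first Proposition, together with uniqueness of solutions. A second subtlety is that only \eqref{eq:ergob} is needed here, not uniform integrability, because the convergence of the jump time is obtained pathwise.
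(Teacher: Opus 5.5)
Your proof is correct and is essentially the paper's argument. Khasminskii's estimate \eqref{theo:convSup} controls the averaging error on the no-jump event, and \eqref{eq:ergob} together with continuity of $G$ and dominated convergence controls the jump-time error. The only difference is in the second part. You use that $\bar{\mathbf Z}^{z,i}_T$ is deterministic to reduce to the marginal convergence $\mathbf P_{x_i}(T\le\tau_1^n)\to\mathbf P(T\le\tau_1)$. The paper instead couples $\tau_1^n$ and $\tau_1$ through the same $U$ and shows $\mathbf P_{x_i}(\tau_1<T\le\tau_1^n)+\mathbf P_{x_i}(\tau_1^n<T\le\tau_1)\to0$.

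Two small corrections. In your first paragraph the factor should be $\mathbf P(U\ge G(T\mu_i(b)))$, not $\mathbf P(G(T\mu_i(b))\ge U)$; your final display has it right. Your remark that uniform integrability is not needed only concerns the limit interchange, since \eqref{eq:ergob} itself is obtained from Assumption~\ref{hyp:b-UI} through Remark~\ref{rem:UI-implies-L1mu}.
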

\begin{proof}
Let $z\in\mathbb R^d$, for all $i\in I$, $x_i\in E_i$ and $T>0$ we have
\begin{align*}
&\mathbf E_{x_i}\big[\varphi(\mathbf Z_T^{n,z})\,\mathbf 1_{\{T\leq\tau_1^n\}}-\varphi(\bar{\mathbf Z}_T^z)\,\mathbf 1_{\{T\leq\tau_1\}}\big]\\
&=\mathbf E_{x_i}\big[\big(\varphi(\mathbf Z_T^{n,z})-\varphi(\bar{\mathbf Z}_T^z)\big)\,\mathbf 1_{\{T\leq\tau_1^n\wedge\tau_1\}}\big]+\mathbf E_{x_i}\big[\varphi(\mathbf Z_T^{n,z})\,\mathbf 1_{\{\tau_1<T\leq\tau_1^n\}}\big]-\mathbf E_{x_i}\big[\varphi(\bar{\mathbf Z}_T^z)\,\mathbf 1_{\{\tau_1^n<T\leq\tau_1\}}\big]\\
&\leq\|\nabla\varphi\|_\infty\,\mathbf E_{x_i}\big[\lvert\mathbf Z_T^{n,z,i}-\bar{\mathbf Z}_T^{z,i}\rvert\,\mathbf 1_{\{T\leq\tau_1^n\wedge\tau_1\}}\big]+\|\varphi\|_\infty\,\big(\mathbf P_{x_i}(\tau_1<T\leq\tau_1^n)+\mathbf P_{x_i}(\tau_1^n<T\leq\tau_1)\big)\\
&\leq\|\nabla\varphi\|_\infty\,\sup_{t\in[0,T]}\mathbf E_{x_i}\big[\lvert\mathbf Z_t^{n,z,i}-\bar{\mathbf Z}_t^{z,i}\rvert\big]+\|\varphi\|_\infty\,\mathbf P_{x_i}\big(\tau_1<T\leq\tau_1^n\big)+\|\varphi\|_\infty\,\mathbf P_{x_i}\big(\tau_1^n<T\leq\tau_1\big).
\end{align*}
By \eqref{theo:convSup} the first term above goes to zero when $n$ goes to infinity. Using the assumption \eqref{hyp:unifintegr} and Remark \ref{rem:UI-implies-L1mu}, $\mathbf P_{x_i}(\tau_1<T\leq\tau_1^n)$ and $\mathbf P_{x_i}(\tau_1^n<T\leq\tau_1)$ go to zero when $n$ goes to infinity and conclues the proof on the proposition. Indeed, using the definition of $\tau_1$ and $\tau_1^n$, we can rewrite
\begin{align*}
\mathbf P_{x_i}(\tau_1<T\leq\tau_1^n)&=\mathbf P_{x_i}\Big(G\Big(\int_0^Tb(X_{ns})\,\mathrm ds\Big)\leq U<G\big(T\mu_i(b)\big)\Big)\\
&=\mathbf E_{x_i}\Big[\Big(G\big(T\mu_i(b)\big)-G\Big(\int_0^Tb(X_{ns})\,\mathrm ds\Big)\Big)_+\Big].
\end{align*}
Then, by assumption \eqref{hyp:unifintegr}, \eqref{eq:ergob} and the Dominated Convergence Theorem the term above goes to zero when $n$ goes to infinity. By the same argument $\mathbf P_{x_i}(\tau_1^n<T\leq\tau_1)$ goes to zero when $n$ goes to infinty  which concludes the proof of the Proposition. 
\end{proof}
Now, we show the convergence of $\mathbf Z^n$ at the first jump-time $\tau_1^n$ in $L^1$
\begin{proposition}
\label{prop:ConvTemps1L1}
For all $i\in I$, $x_i\in E_i$, $z\in\mathbb R^d$ and $T>0$, if assumptions \eqref{hyp:Lip} and \eqref{hyp:ErgoF} hold true and $F_i(z,\,\cdot\,)\in\mathcal A_i$, we have
\begin{align*}
\mathbf E_{x_i}\big[\lvert\mathbf Z_{\tau_1^n}^{n,z}-\bar{\mathbf Z}_{\tau_1}^z\rvert\,\mathbf 1_{\{\tau_1^n\vee\tau_1\leq T\}}\big]\underset{n\to\infty}{\longrightarrow}0.
\end{align*}
\end{proposition}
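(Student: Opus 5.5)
We need $\mathbf E_{x_i}\big[|\mathbf Z^{n,z}_{\tau_1^n}-\bar{\mathbf Z}^z_{\tau_1}|\,\mathbf 1_{\{\tau_1^n\vee\tau_1\le T\}}\big]\to0$. Before its first jump, $\mathbf I^n$ stays equal to $i$ and $\mathbf X^n_t=X_{nt}$. Hence $\mathbf Z^{n,z}_t=\mathbf Z^{n,z,i}_t$ and $\bar{\mathbf Z}^z_t=\bar{\mathbf Z}^{z,i}_t$ for $t\le\tau_1^n$, resp. $t\le\tau_1$. Both processes are continuous, so the values at the jump times are the left limits, which belong to the single-regime dynamics. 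We therefore split the quantity as
\[
\mathbf E_{x_i}\big[|\mathbf Z^{n,z,i}_{\tau_1^n}-\bar{\mathbf Z}^{z,i}_{\tau_1^n}|\mathbf 1_{\{\tau_1^n\le T\}}\big]+\mathbf E_{x_i}\big[|\bar{\mathbf Z}^{z,i}_{\tau_1^n}-\bar{\mathbf Z}^{z,i}_{\tau_1}|\mathbf 1_{\{\tau_1^n\vee\tau_1\le T\}}\big].
\]
Both $\tau_1$ and $\tau_1^n$ are built from the same uniform $U$, as in the proof of Proposition~\ref{prop:conv1}.

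\emph{Second term.} $\bar F_i$ is $L_i$-Lipschitz with $|\bar F_i(0)|\le M_i$, so Gronwall gives $\sup_{t\le T}|\bar{\mathbf Z}^{z,i}_t|\le(|z|+TM_i)e^{L_iT}$. Thus $\bar{\mathbf Z}^{z,i}$ is deterministic and Lipschitz on $[0,T]$ with constant $C:=L_i(|z|+TM_i)e^{L_iT}+M_i$. The second term is then at most $C\,\mathbf E_{x_i}[|\tau_1^n-\tau_1|\wedge T]$. It remains to show $\tau_1^n\wedge T\to\tau_1\wedge T$ in probability; since the quantity is bounded, convergence in $L^1$ follows. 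By Remark~\ref{rem:UI-implies-L1mu}, $\int_0^t b(X_{ns})\,\mathrm ds=\frac1n\int_0^{nt}b(X_s)\,\mathrm ds\to t\mu_i(b)$ a.s. for each $t$. These functions are nondecreasing in $t$ and the limit is continuous, so the convergence is locally uniform in $t$ a.s. Since $G$ is continuous, the generalized inverses converge at every $U$ which is not a flat level of $G(\cdot\,\mu_i(b))$. There are only countably many such levels, so this is a $U$-null set and $\tau_1^n\to\tau_1$ a.s. (with the degenerate case $\mu_i(b)=0$ treated by the same monotonicity argument).

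\emph{First term.} This is the main obstacle: \eqref{theo:convSup} controls the error only at deterministic times, whereas $\tau_1^n$ is random and depends on the same path $X$. I would first try to upgrade \eqref{theo:convSup} to $\mathbf E_{x_i}[\sup_{t\le T}|\mathbf Z^{n,z,i}_t-\bar{\mathbf Z}^{z,i}_t|]\to0$, which immediately kills the first term. Write $\Delta_t=\mathbf Z^{n,z,i}_t-\bar{\mathbf Z}^{z,i}_t$. Then
\[
|\Delta_t|\le L_i\int_0^t|\Delta_s|\,\mathrm ds+\Big|\int_0^t\big(F_i(\bar{\mathbf Z}^{z,i}_s,X_{ns})-\bar F_i(\bar{\mathbf Z}^{z,i}_s)\big)\mathrm ds\Big|,
\]
so by Gronwall $\sup_{t\le T}|\Delta_t|\le e^{L_iT}\sup_{t\le T}|R^n_t|$, where $R^n$ is the last integral. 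To bound $R^n$:
\begin{enumerate}
\item Discretize $[0,T]$ into $m$ blocks of length $h=T/m$ and freeze $\bar{\mathbf Z}^{z,i}$ at the left endpoint of each block. The freezing error is at most $2L_iChT$, by the Lipschitz bounds \eqref{hyp:Lip} and the Lipschitz continuity of $\bar{\mathbf Z}^{z,i}$.
\item Inside a block, the partial integrals deviate by at most $2h(M_i+L_i\sup|\bar{\mathbf Z}|)$, using \eqref{hyp:Fbound}.
\item The remaining quantity is $\sum_j\big|\int_{t_j}^{t_{j+1}}(F_i(z_j,X_{ns})-\bar F_i(z_j))\,\mathrm ds\big|$ with deterministic points $z_j$. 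Each summand has expectation tending to $0$ as $n\to\infty$ by \eqref{hyp:ErgoF}, applied with window length $h$ at starting time $t_j$ and multiplied by $h$.
\end{enumerate}
Letting $n\to\infty$ and then $m\to\infty$ gives the uniform $L^1$ convergence, which concludes the proof.
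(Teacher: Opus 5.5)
Your argument is correct. Its skeleton matches the paper's: the same split through $\bar{\mathbf Z}^{z,i}_{\tau_1^n}$, the same Lipschitz-in-time bound on the averaged flow for the second term, and the same use of $\tau_1^n\to\tau_1$ a.s.\ with dominated convergence. The paper only asserts that a.s.\ convergence; you justify it, using locally uniform convergence of $t\mapsto\int_0^t b(X_{ns})\,\mathrm ds$ and the fact that $U$ a.s.\ misses the countably many flat levels of $t\mapsto G(t\mu_i(b))$.

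The genuine difference is the first term. The paper bounds $\mathbf E_{x_i}\big[\lvert\mathbf Z^{n,z,i}_{\tau_1^n}-\bar{\mathbf Z}^{z,i}_{\tau_1^n}\rvert\,\mathbf 1_{\{\tau_1^n\vee\tau_1\le T\}}\big]$ directly by $\sup_{t\in[0,T]}\mathbf E_{x_i}\big[\lvert\mathbf Z^{n,z,i}_t-\bar{\mathbf Z}^{z,i}_t\rvert\big]$ and then applies \eqref{theo:convSup}. That interchange is legitimate for a deterministic or independent time. Here, however, $\tau_1^n$ is a functional of the same path $X_{n\cdot}$ through $\int_0^{\cdot}b(X_{ns})\,\mathrm ds$, so the step is not justified as written. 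Your route avoids this. The Gronwall reduction to $\sup_{t\le T}\lvert R^n_t\rvert$, followed by block discretization, reproves \eqref{theo:convSup} in the stronger form $\mathbf E_{x_i}\big[\sup_{t\le T}\lvert\mathbf Z^{n,z,i}_t-\bar{\mathbf Z}^{z,i}_t\rvert\big]\to0$. That is exactly what a path-dependent random time requires, so your route is the one that actually closes the proof.

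The price is twofold. You use \eqref{hyp:Fbound} for the partial-block estimate, and you read \eqref{hyp:ErgoF} as holding for every window length $h=T/m$. \eqref{hyp:Fbound} is not among the proposition's listed hypotheses. It is, however, in force in Theorem~\ref{th:conv}, where the proposition is used. Likewise, Assumption~\ref{hyp:b-UI} (via Remark~\ref{rem:UI-implies-L1mu}) is silently needed by both you and the paper for $\tau_1^n\to\tau_1$.

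One small correction concerns the second term. Bound it by $C\,\mathbf E_{x_i}\big[\lvert\tau_1^n\wedge T-\tau_1\wedge T\rvert\big]$, which dominates the original integrand on $\{\tau_1^n\vee\tau_1\le T\}$, rather than by $C\,\mathbf E_{x_i}\big[\lvert\tau_1^n-\tau_1\rvert\wedge T\big]$. The latter need not vanish when $\mathbf P_{x_i}(\tau_1=\infty)>0$, e.g.\ when $\mu_i(b)=0$. The truncated version is clearly what you intend, since you then prove $\tau_1^n\wedge T\to\tau_1\wedge T$.
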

\begin{proof}
Let $i\in I$, $x_i\in E_i$, $z\in\mathbb R^d$ and $T>0$, 
\begin{align*}
\mathbf E_{x_i}\big[\lvert\mathbf Z_{\tau_1^n}^{n,z}-\bar{\mathbf Z}_{\tau_1}^z\rvert\,\mathbf 1_{\{\tau_1^n\vee\tau_1\leq T\}}\big]&=\mathbf E_{x_i}\big[\lvert\mathbf Z_{\tau_1^n}^{n,z,i}-\bar{\mathbf Z}_{\tau_1}^{z,i}\rvert\,\mathbf 1_{\{\tau_1^n\vee\tau_1\leq T\}}\big].
\end{align*}
For $\tau_1^n\vee\tau_1\leq T$ we have
\begin{align*}
\lvert\mathbf Z_{\tau_1^n}^{n,z,i}-\bar{\mathbf Z}_{\tau_1}^{z,i}\rvert&\leq\big\lvert\mathbf Z_{\tau_1^n}^{n,z,i}-\bar{\mathbf Z}_{\tau_1^n}^{z,i}\big\rvert+\big\lvert\bar{\mathbf Z}_{\tau_1^n}^{z,i}-\bar{\mathbf Z}_{\tau_1}^{z,i}\big\rvert.
\end{align*}
Moreover
\begin{align*}
\lvert\bar{\mathbf Z}_{\tau_1^n}^{z,i}-\bar{\mathbf Z}_{\tau_1}^{z,i}\rvert\leq\int_{\tau_1^n\wedge\tau_1}^{\tau_1^n\vee\tau_1}\lvert\bar F_i(\bar{\mathbf Z}_t^{z,i})\rvert\,\mathrm dt
\end{align*}
and
\begin{align*}
\lvert\bar F_i(\bar{\mathbf Z}_t^{z,i})\rvert&\leq L_i\,\lvert\bar{\mathbf Z}_t^{z,i}\rvert+\lvert\bar F_i(0)\rvert\\
&\leq L_i\,\lvert z\rvert+\lvert\bar F_i(0)\rvert+\int_0^tL_i\,\lvert\bar F_i(\bar{\mathbf Z}_s^{z,i})\rvert\,\mathrm ds
\end{align*}
so by the Gronwall Lemma
\begin{align*}
\lvert\bar F_i(\bar{}\mathbf Z_t^{z,i})\rvert\leq\big(L_i\,\lvert z\rvert+\lvert\bar F_i(0)\rvert\big)\,e^{L_it}.
\end{align*}
Hence, for $\tau_1^n\vee\tau_1\leq T$
\begin{align*}
\lvert\bar{\mathbf Z}_{\tau_1^n}^{z,i}-\bar{\mathbf Z}_{\tau_1}^{z,i}\rvert&\leq \lvert\tau_1^n-\tau_1\rvert\,\big(L_i\,\lvert z\rvert+\lvert\bar F_i(0)\rvert\big)\,e^{L_iT}.
\end{align*}
Finally, for $\tau_1^n\vee\tau_1\leq T$ we have
\begin{align*}
\lvert\mathbf Z_{\tau_1^n}^{n,z,i}-\bar{\mathbf Z}_{\tau_1}^{z,i}\rvert&\leq\big\lvert\mathbf Z_t^{n,z,i}-\bar{\mathbf Z}_t^{z,i}\big\rvert+\lvert\tau_1^n-\tau_1\rvert\,\big(L_i\,\lvert z\rvert+\lvert\bar F_i(0)\rvert\big)\,e^{LT}.
\end{align*}
Therefore
\begin{align*}
\mathbf E_{x_i}\big[\lvert\mathbf Z_{\tau_1^n}^{n,z,i}-\bar{\mathbf Z}_{\tau_1}^{z,i}\rvert\,\mathbf 1_{\{\tau_1^n\vee\tau_1\leq T\}}\big]&\leq\sup_{t\in[0,T]}\mathbf E_{x_i}\big[\big\lvert\mathbf Z_t^{n,z,i}-\bar{\mathbf Z}_t^{z,i}\big\rvert\big]\\
&+C_T\,\mathbf E_{x_i}\big[\lvert\tau_1^n-\tau_1\rvert\,\mathbf 1_{\{\tau_1^n\vee\tau_1\leq T\}}\big].
\end{align*}
By \eqref{theo:convSup} the first term above goes to zero when $n$ goes to infinity. For the second term we use that $\tau_1^n\rightarrow\tau_1$ when $n\rightarrow\infty$ almost-surely and the Dominated Convergence Theorem to get 
\begin{align*}
\mathbf E_{x_i}\big[\lvert\tau_1^n-\tau_1\rvert\,\mathbf 1_{\{\tau_1^n\vee\tau_1\leq T\}}\big]\underset{n\to\infty}{\longrightarrow}0.
\end{align*}
Then we show that 
\begin{align*}
\mathbf E_{x_i}\big[\lvert\mathbf Z_{\tau_1^n}^{n,z,i}-\bar{\mathbf Z}_{\tau_1}^{z,i}\rvert\,\mathbf 1_{\{\tau_1^n\vee\tau_1\leq T\}}\big]\underset{n\to\infty}{\longrightarrow}0,
\end{align*}
and concludes the proof.
\end{proof}
Finally, we prove the convergence in law of $\mathbf Z^n$ at random time $\tau_1^n$
\begin{proposition}
\label{prop:convTemps1}
Let $z\in\mathbb R^d$, then for all $i\in I$, $x_i\in E_i$ and $T>0$, under assumptions \eqref{hyp:Lip} and \eqref{hyp:ErgoF} and if $F_i(z,\,\cdot\,)\in\mathcal A_i$ we have
\begin{align*}
\mathbf E_{x_i}\big[\varphi(\mathbf Z_{\tau_1^n}^{n,z})\,\mathbf 1_{\{\tau_1^n\leq T\}}\big]\underset{n\to\infty}{\longrightarrow}\mathbf E_{x_i}\big[\varphi(\bar{\mathbf Z}_{\tau_1}^z)\,\mathbf 1_{\{\tau_1\leq T\}}\big],\quad\forall\varphi\in C_b^1.
\end{align*}
\end{proposition}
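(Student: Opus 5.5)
We reduce the statement to Proposition~\ref{prop:ConvTemps1L1} together with the convergence of the indicator events $\{\tau_1^n\leq T\}$ to $\{\tau_1\leq T\}$, which is already used in the proof of Proposition~\ref{prop:conv1}. Fix $\varphi\in C_b^1$. On $\{\tau_1^n\leq T\}$ the index process has not yet jumped on $[0,\tau_1^n)$, and $\mathbf Z^{n,z}$ is continuous, so $\mathbf Z^{n,z}_{\tau_1^n}=\mathbf Z^{n,z,i}_{\tau_1^n}$. Likewise $\bar{\mathbf Z}^z_{\tau_1}=\bar{\mathbf Z}^{z,i}_{\tau_1}$ on $\{\tau_1\leq T\}$. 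We work on the coupling used in the previous proofs, where $\tau_1^n$ and $\tau_1$ are built from the same uniform variable $U$ and the same trajectory of $X$.

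We decompose
\begin{align*}
\mathbf E_{x_i}\big[\varphi(\mathbf Z_{\tau_1^n}^{n,z})\mathbf 1_{\{\tau_1^n\leq T\}}-\varphi(\bar{\mathbf Z}_{\tau_1}^z)\mathbf 1_{\{\tau_1\leq T\}}\big]
&=\mathbf E_{x_i}\big[\big(\varphi(\mathbf Z_{\tau_1^n}^{n,z})-\varphi(\bar{\mathbf Z}_{\tau_1}^z)\big)\mathbf 1_{\{\tau_1^n\vee\tau_1\leq T\}}\big]\\
&\quad+\mathbf E_{x_i}\big[\varphi(\mathbf Z_{\tau_1^n}^{n,z})\mathbf 1_{\{\tau_1^n\leq T<\tau_1\}}\big]-\mathbf E_{x_i}\big[\varphi(\bar{\mathbf Z}_{\tau_1}^z)\mathbf 1_{\{\tau_1\leq T<\tau_1^n\}}\big].
\end{align*}
The first term is bounded by $\|\nabla\varphi\|_\infty\,\mathbf E_{x_i}[\lvert\mathbf Z_{\tau_1^n}^{n,z}-\bar{\mathbf Z}_{\tau_1}^z\rvert\mathbf 1_{\{\tau_1^n\vee\tau_1\leq T\}}]$, which tends to $0$ by Proposition~\ref{prop:ConvTemps1L1}. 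The remaining two terms are bounded by $\|\varphi\|_\infty\big(\mathbf P_{x_i}(\tau_1^n\leq T<\tau_1)+\mathbf P_{x_i}(\tau_1\leq T<\tau_1^n)\big)$.

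The main point is to show that these two probabilities vanish. We rewrite them as in the proof of Proposition~\ref{prop:conv1}, for instance
\[
\mathbf P_{x_i}(\tau_1^n\leq T<\tau_1)\le\mathbf E_{x_i}\Big[\Big(G\Big(\int_0^Tb(X_{ns})\,\mathrm ds\Big)-G\big(T\mu_i(b)\big)\Big)_+\Big],
\]
where a boundary term $\{U=G(\cdot)\}$ has probability zero. The other probability is handled symmetrically.

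Since $\int_0^Tb(X_{ns})\,\mathrm ds=\frac1n\int_0^{nT}b(X_s)\,\mathrm ds\to T\mu_i(b)$ almost surely by \eqref{eq:ergob}, and $G$ is continuous (it is absolutely continuous), the integrand converges to $0$ almost surely. It is bounded by $1$, so dominated convergence gives the limit $0$.

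This small boundary difference $\le$ versus $<$ between the events is the only delicate point: continuity of $G$ and the uniform variable $U$ make ties negligible. Summing the three bounds yields the claim.
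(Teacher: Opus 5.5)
Your proof is correct and follows essentially the same route as the paper: you split into the comparison on $\{\tau_1^n\vee\tau_1\leq T\}$, controlled via Proposition~\ref{prop:ConvTemps1L1}, plus the two mismatch probabilities, which vanish as in Proposition~\ref{prop:conv1} by the shared-$U$ coupling, \eqref{eq:ergob}, continuity of $G$ and dominated convergence. The only difference is that the paper inserts the intermediate value $\bar{\mathbf Z}^z_{\tau_1^n}$ and invokes \eqref{theo:convSup} and the previous proposition separately, whereas you apply Proposition~\ref{prop:ConvTemps1L1} directly; this is slightly cleaner, since it avoids evaluating $\bar{\mathbf Z}^z$ at $\tau_1^n$ on the event $\{\tau_1<\tau_1^n\}$.
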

\begin{proof}
Let $z\in\mathbb R^d$, for all $i\in I$, $x_i\in E_i$ and $T>0$ we have
\begin{align*}
&\mathbf E_{x_i}\big[\varphi(\mathbf Z_{\tau_1^n}^{n,z})\,\mathbf 1_{\{\tau_1^n\leq T\}}\big]-\mathbf E_{x_i}\big[\varphi(\bar{\mathbf Z}_{\tau_1}^z)\,\mathbf 1_{\{\tau_1\leq T\}}\big]\\
&=\mathbf E_{x_i}\Big[\big(\varphi(\mathbf Z_{\tau_1^n}^{n,z})-\varphi(\bar{\mathbf Z}_{\tau_1^n}^z)\big)\,\mathbf 1_{\{\tau_1^n\leq T\}}\Big]+\mathbf E_{x_i}\big[\varphi(\bar{\mathbf Z}_{\tau_1^n}^z)\,\mathbf 1_{\{\tau_1^n\leq T\}}-\varphi(\bar{\mathbf Z}_{\tau_1}^z)\,\mathbf 1_{\{\tau_1\leq T\}}\big]\\
&\leq\|\nabla\varphi\|_\infty\,\sup_{t\in[0,T]}\mathbf E_{x_i}\big[\lvert\mathbf Z_t^{n,z,i}-\bar{\mathbf Z}_t^{z,i}\rvert\big]+\|\nabla\varphi\|_\infty\,\mathbf E_{x_i}\big[\lvert\bar{\mathbf Z}_{\tau_1^n}^z-\bar{\mathbf Z}_{\tau_1}\rvert\,\mathbf 1_{\{\tau_1^n\vee\tau_1\leq T\}}\big]\\
&+\|\varphi\|_\infty\,\mathbf P_{x_i}\big(\tau_1^n\leq T<\tau_1\big)+\|\varphi\|_\infty\,\mathbf P_{x_i}\big(\tau_1\leq T<\tau_1^n\big).
\end{align*}
By \eqref{theo:convSup} and the previous Proposition both of the first terms above go to zero when $n$ goes to infinity. As in the proof of the Proposition \ref{prop:conv1} the two last terms go to zero when $n$ goes to infinity.
\end{proof}
\begin{remark}
Under suitable coupling assumptions, it is possible to consider a family $(X^z)_{z\in\mathbb R^d}$ that satisfies an ergodic theorem for fixed $z\in\mathbb R^d$ with invariant measure $\mu_i^z$ on each $E_i$, and to obtain convergence, before the first jump time and at the first jump time $\tau_1^n$ of $\mathbf Z^{z,n}$, toward the averaged process $\bar{\mathbf Z}^z$ satisfying the differential equation
\[
\frac{\mathrm d\bar{\mathbf Z}_t^z}{\mathrm dt}=\bar F_i(\bar{\mathbf Z}_t^z),\quad\bar{\mathbf Z}_0^z=z
\]
with
\[
\bar F_i(z)=\int_{E_i}F_i(z,x)\,\mu_i^z(\mathrm dx)
\]
so as to allow the fast process to depend on the slow process.
\end{remark}

\section{Proof of the Theorem \ref{th:conv}}
\subsection{Technical Lemmas}
To prove our main result, we first establish several lemmas, starting with a Lipschitz property in mean with respect to the initial condition of $\mathbf Z^n$.
\begin{lemma}
\label{lem:majoration1}
For all $n,k\geq1$, $T>0$, $x\in E$ and $y\in\mathbb R^d$ we set
\begin{align*}
\phi_T^{n,k}(y,x)=\mathbf E_{x}\big[\varphi(\mathbf Z_T^{n,y})\,\mathbf 1_{\{T\leq\tau_k^n\}}\big].
\end{align*}
Let $L>0$, then for all $n,k\geq1$, $T>0$ there exist $M_T^k>0$ such that for all $z,y\in\mathbb R^d$ and $x\in E$
\begin{align*}
\lvert\phi_T^{n,k}(y,x)-\phi_T^{n,k}(z,x)\rvert&\leq2\,\|\varphi\|_\infty\,\mathbf P_x(T\leq\tau_k^n,L_T^n>L)+\|\nabla\varphi\|_\infty\,\lvert y-z\rvert\,e^{TL}
\end{align*}
where for all $t>0$ and $n\geq1$
\begin{align*}
L_t^n:=\sup_{s\in[0,t]}L_{\mathbf I_s^n}.
\end{align*}
\end{lemma}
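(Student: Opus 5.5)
The plan is a pathwise coupling: drive $\mathbf Z^{n,y}$ and $\mathbf Z^{n,z}$ by the same realisation of $(\mathbf X^n,\mathbf I^n)$ under $\mathbf P_x$. This is possible because, by uniqueness of the solution (see the Remark following \eqref{hyp:Lip}), $\mathbf Z^{n,y}$ is a measurable functional of the trajectory of $(\mathbf X^n,\mathbf I^n)$ and of the initial condition $y$ only. In particular, the event $\{T\leq\tau_k^n\}$ and the process $L^n_T$ are the same for both initial conditions. We can therefore write
\[
\phi_T^{n,k}(y,x)-\phi_T^{n,k}(z,x)=\mathbf E_x\big[\big(\varphi(\mathbf Z_T^{n,y})-\varphi(\mathbf Z_T^{n,z})\big)\mathbf 1_{\{T\leq\tau_k^n\}}\big],
\]
and split the expectation according to whether $L_T^n>L$ or $L_T^n\leq L$.

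On $\{T\leq\tau_k^n,\,L_T^n>L\}$ we bound the integrand crudely by $2\|\varphi\|_\infty$. This produces the first term $2\|\varphi\|_\infty\,\mathbf P_x(T\leq\tau_k^n,L_T^n>L)$.

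On $\{T\leq\tau_k^n,\,L_T^n\leq L\}$ we use the mean value inequality, $|\varphi(a)-\varphi(b)|\leq\|\nabla\varphi\|_\infty|a-b|$, and then a deterministic Gronwall estimate. For every $t\in[0,T]$,
\[
\lvert\mathbf Z_t^{n,y}-\mathbf Z_t^{n,z}\rvert\leq\lvert y-z\rvert+\int_0^t\big\lvert F_{\mathbf I_s^n}(\mathbf Z_s^{n,y},\mathbf X_s^n)-F_{\mathbf I_s^n}(\mathbf Z_s^{n,z},\mathbf X_s^n)\big\rvert\,\mathrm ds .
\]
Since $\mathbf X^n_s\in E_{\mathbf I^n_s}$, assumption \eqref{hyp:Lip} bounds the integrand by $L_{\mathbf I^n_s}\lvert\mathbf Z_s^{n,y}-\mathbf Z_s^{n,z}\rvert\leq L_T^n\lvert\mathbf Z_s^{n,y}-\mathbf Z_s^{n,z}\rvert$. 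On this event $L^n_T\leq L$, so the integrand is at most $L\lvert\mathbf Z_s^{n,y}-\mathbf Z_s^{n,z}\rvert$. Gronwall's lemma then gives $\lvert\mathbf Z_T^{n,y}-\mathbf Z_T^{n,z}\rvert\leq\lvert y-z\rvert e^{TL}$ pathwise. Taking expectations and bounding the probability of the event by $1$ gives the second term $\|\nabla\varphi\|_\infty\lvert y-z\rvert e^{TL}$.

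The only delicate point is making sure the Gronwall step is legitimate pathwise on $[0,T]$. The process $\mathbf I^n$ is piecewise constant with jumps at $\tau_j^n$. On $\{T\leq\tau_k^n\}$ there are finitely many jumps before $T$, so $s\mapsto L_{\mathbf I^n_s}$ is a measurable step function and $t\mapsto\mathbf Z_t^{n,\cdot}$ is continuous, being an integral. Hence the integral form of Gronwall applies without any integrability issue. Note also that $L^n_T$ might be infinite a priori when $I$ is infinite, but that case falls in the first term and causes no problem. Finally, the constant $M^k_T$ announced in the statement is not actually needed in the bound, so it can be ignored (or taken arbitrary).
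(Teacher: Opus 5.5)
Your proposal is correct and follows the same route as the paper. The paper also splits on $\{L_T^n>L\}$ versus $\{L_T^n\le L\}$, bounds the first piece by $2\|\varphi\|_\infty$, and on the second uses the mean value inequality plus a pathwise Gronwall bound $|\mathbf Z_T^{n,y}-\mathbf Z_T^{n,z}|\le|y-z|e^{TL_T^n}$. Your remarks on driving both solutions with the same trajectory of $(\mathbf X^n,\mathbf I^n)$ and on the constant $M_T^k$ being superfluous are accurate, and they make explicit what the paper leaves implicit.
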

\begin{proof}
Let, $L>0$, for all $n,k\geq1$, $T>0$, $x\in E$ and $y,z\in\mathbb R^d$ we have
\begin{align*}
\lvert\phi_T^{n,k}(y,x)-\phi_T^{n,k}(z,x)\rvert&\leq\mathbf E_x\big[\big\lvert\varphi(\mathbf Z_T^{n,y})-\varphi(\mathbf Z_T^{n,y})\big\rvert\,\mathbf 1_{\{T\leq\tau_k^n\}}\,\mathbf 1_{\{L_T^n>L\}}\big]\\
&+\mathbf E_x\big[\big\lvert\varphi(\mathbf Z_T^{n,y})-\varphi(\mathbf Z_T^{n,y})\big\rvert\,\mathbf 1_{\{T\leq\tau_k^n\}}\,\mathbf 1_{\{L_T^n\leq L\}}\big].
\end{align*}
The first term above is dominated by
\begin{align*}
2\,\|\varphi\|_\infty\,\mathbf P_x(T\leq\tau_k^n,L_T^n>L).
\end{align*}
For the second term we have
\begin{align*}
\mathbf E_x\big[\big(\varphi(\mathbf Z_T^{n,y})-\varphi(\mathbf Z_T^{n,y})\big)\,\mathbf 1_{\{T\leq\tau_k^n\}}\,\mathbf 1_{\{L_T^n\leq L\}}\big]
\leq\|\nabla\varphi\|_\infty\,\mathbf E_x\big[\lvert\mathbf Z_T^{n,y}-\mathbf Z_T^{n,z}\rvert\,\mathbf 1_{\{T\leq\tau_k^n\}}\,\mathbf 1_{\{L_T^n\leq L\}}\big]
\end{align*}
and
\begin{align*}
\lvert\mathbf Z_T^{n,y}-\mathbf Z_T^{n,z}\rvert&\leq\lvert y-z\rvert+\int_0^T\lvert F_{\mathbf I_t^n}(\mathbf Z_t^{n,y},\mathbf X_t^n)-F_{\mathbf I_t^n}(\mathbf Z_t^{n,z},\mathbf X_t^n)\rvert\,\mathrm dt\\
&\leq\lvert y-z\rvert+\int_0^TL_{\mathbf I_t^n}\,\lvert\mathbf Z_t^{n,y}-\mathbf Z_t^{n,z}\rvert\,\mathrm dt\\
&\leq\lvert y-z\rvert+L_T^n\int_0^T\lvert\mathbf Z_t^{n,y}-\mathbf Z_t^{n,z}\rvert\,\mathrm dt
\end{align*}
So, by the Gronwall Lemma we have
\begin{align*}
\lvert\mathbf Z_T^{n,y}-\mathbf Z_T^{n,z}\rvert&\leq\lvert y-z\rvert\,e^{TL_T^n}.
\end{align*}
Then, 
\begin{align*}
\mathbf E_x\big[\lvert\mathbf Z_T^{n,y}-\mathbf Z_T^{n,y}\rvert\,\mathbf 1_{\{T\leq\tau_k^n\}}\,\mathbf 1_{\{L_T^n\leq L\}}\big]\leq\lvert y-z\rvert\,e^{TL}\,\mathbf P_x(L_T^n\leq L).
\end{align*}
Finally 
\begin{align*}
\lvert\phi_T^{n,k}(y,x)-\phi_T^{n,k}(z,x)\rvert\leq2\,\|\varphi\|_\infty\,\mathbf P_x(T\leq\tau_k^n,L_T^n>L)+\|\nabla\varphi\|_\infty\,\lvert y-z\rvert\,e^{TL}\,\mathbf P_x(L_T^n\leq L),
\end{align*}
and conclues the proof of the Lemma.
\end{proof}
Now we get a uniform bound on $\mathbf Z^n$.
\begin{lemma}
\label{lem:majorationZn}
Let $n\geq1$ and $z\in\mathbb R^d$, then for all $0<t<T$, if \eqref{hyp:Fbound} and \eqref{hyp:Lip} are satisfied we have
\begin{align*}
\lvert\mathbf Z_t^{n,z}\rvert\leq\big(\lvert z\rvert+T\,M_T^n)\,e^{TL_T^n},
\end{align*}
where for all $t>0$ and $n\geq1$
\begin{align*}
M_t^n:=\sup_{s\in[0,t]}M_{\mathbf I_s^n}.
\end{align*}
\end{lemma}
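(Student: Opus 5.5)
The bound follows from a single Gronwall argument applied pathwise to the integral form of the equation, as in the proof of Lemma~\ref{lem:majoration1}. Fix $n\geq1$, $z\in\mathbb R^d$ and $T>0$, and work on a fixed trajectory $\omega$. Since $\mathbf Z^{n,z}$ is the continuous solution of the equation, for all $0\leq t\leq T$ we have
\begin{align*}
\mathbf Z_t^{n,z}=z+\int_0^tF_{\mathbf I_s^n}(\mathbf Z_s^{n,z},\mathbf X_s^n)\,\mathrm ds .
\end{align*}
The first step is a pointwise linear-growth bound on the vector field. For $s\in[0,t]$, with $i=\mathbf I_s^n$ and $x=\mathbf X_s^n\in E_i$, the Lipschitz assumption \eqref{hyp:Lip} and the bound \eqref{hyp:Fbound} give
\begin{align*}
\lvert F_i(y,x)\rvert\leq\lvert F_i(0,x)\rvert+\lvert F_i(y,x)-F_i(0,x)\rvert\leq M_i+L_i\,\lvert y\rvert .
\end{align*}
Taking suprema over the regimes visited up to time $T$, this becomes $\lvert F_{\mathbf I_s^n}(y,\mathbf X_s^n)\rvert\leq M_T^n+L_T^n\,\lvert y\rvert$ for all $s\leq T$. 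Here $M_T^n$ and $L_T^n$ are the running suprema defined in the statement and in Lemma~\ref{lem:majoration1}.

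Plugging this into the integral equation gives, for $t\leq T$,
\begin{align*}
\lvert\mathbf Z_t^{n,z}\rvert\leq\lvert z\rvert+t\,M_T^n+L_T^n\int_0^t\lvert\mathbf Z_s^{n,z}\rvert\,\mathrm ds\leq\big(\lvert z\rvert+T\,M_T^n\big)+L_T^n\int_0^t\lvert\mathbf Z_s^{n,z}\rvert\,\mathrm ds .
\end{align*}
The coefficients $\lvert z\rvert+TM_T^n$ and $L_T^n$ are random but do not depend on $t$, so Gronwall's lemma applies pathwise. It yields $\lvert\mathbf Z_t^{n,z}\rvert\leq(\lvert z\rvert+TM_T^n)\,e^{tL_T^n}\leq(\lvert z\rvert+TM_T^n)\,e^{TL_T^n}$, which is the claim.

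The only delicate point is the finiteness of $L_T^n$ and $M_T^n$, which is needed for Gronwall to give a nontrivial bound. I would handle it by restricting to $t<\tau_\infty^n$, where only finitely many regimes are visited on $[0,t]$ and each $L_i,M_i$ is finite. Where the suprema happen to be infinite, the inequality holds trivially.

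The other point to check is measurability and integrability of $s\mapsto\lvert\mathbf Z_s^{n,z}\rvert$, which is immediate from continuity of the solution (see the Remark following \eqref{hyp:Lip}). No probabilistic argument is needed, since the bound is deterministic given the path.
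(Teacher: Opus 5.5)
Your proposal is correct and matches the paper's proof: you bound the vector field by $M_{\mathbf I_s^n}+L_{\mathbf I_s^n}\lvert\mathbf Z_s^{n,z}\rvert$, replace these by the running suprema $M_T^n$ and $L_T^n$, and apply Gronwall pathwise. Your remark that restricting to $t<\tau_\infty^n$ keeps $L_T^n$ and $M_T^n$ finite is a clarification the paper leaves implicit.
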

\begin{proof}
For all $i\in I$ we set
Let $n\geq1$ and $z\in\mathbb R^d$, for all $0<t<T$ we have
\begin{align*}
\lvert\mathbf Z_t^{n,z}\rvert&\leq\lvert z\rvert+\Big\lvert\int_0^t F_{\mathbf I_s^n}(\mathbf Z_s^{n,z},\mathbf X_s^n)\,\mathrm ds\Big\rvert\\
&\leq\lvert z\rvert+\int_0^tL_{\mathbf I_s^n}\,\lvert\mathbf Z_s^{n,z}\rvert\,\mathrm ds+\int_0^t\lvert F_{\mathbf I_s^n}(0,\mathbf X_s^n)\rvert\,\mathrm ds\\
&\leq\lvert z\rvert+L_t^n\int_0^t\lvert\mathbf Z_s^{n,z}\rvert\,\mathrm ds+t\,M_t^n\\
&\leq\lvert z\rvert+L_T^n\int_0^t\lvert\mathbf Z_s^{n,z}\rvert\,\mathrm ds+T\,M_T^n
\end{align*}
So by the Gronwall Lemma we have
\begin{align*}
\lvert\mathbf Z_t^{n,z}\rvert\leq\big(\lvert z\rvert+T\,M_T^n)\,e^{TL_T^n},
\end{align*}
which conclues the proof of the Lemma.
\end{proof}
Finally, we establish another Lipschitz property in mean for $\mathbf Z^n$, this time with respect to time.
\begin{lemma}
\label{lem:majoration2}
We recall the notation from Lemma \ref{lem:majoration1}: for all $n,k\geq1$, $t>0$, $x\in E$ and $y\in\mathbb R^d$ we set
\begin{align*}
\phi_t^{n,k}(y,x)=\mathbf E_x\big[\varphi(\mathbf Z_t^{n,y})\,\mathbf 1_{\{t\leq\tau_k^n\}}\big].
\end{align*}
Then, for all $n,k\geq1$, $0<s<t<T$ $x\in E$ and $z\in\mathbb R^d$, under assumptions \eqref{hyp:Lip} and \eqref{hyp:Fbound} we have
\begin{align*}
\lvert\phi_t^{n,k}(z,x)-\phi_s^{n,k}(z,x)\rvert&\leq(t-s)\,\|\nabla\varphi\|_\infty\,\Big(L\,\big(\lvert z\rvert+TM)\,e^{TL}+M\Big)\\
&+2\,\|\varphi\|_\infty\,\big(\,\mathbf P_x(t\leq\tau_k^n,L_t^n>L)+\,\mathbf P_x(t\leq\tau_k^n,M_t^n>M)\big)\\
&+\|\varphi\|_\infty\,\mathbf P_x(s\leq\tau_k^n<t).
\end{align*}
\end{lemma}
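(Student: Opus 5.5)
We split the difference of the two expectations according to the indicators, and then treat each piece on a good event where the Lipschitz constants and the sup norms stay bounded, exactly as in Lemma~\ref{lem:majoration1}. Since $\{t\leq\tau_k^n\}\subset\{s\leq\tau_k^n\}$, we write
\begin{align*}
\phi_t^{n,k}(z,x)-\phi_s^{n,k}(z,x)
=\mathbf E_x\big[\big(\varphi(\mathbf Z_t^{n,z})-\varphi(\mathbf Z_s^{n,z})\big)\mathbf 1_{\{t\leq\tau_k^n\}}\big]
-\mathbf E_x\big[\varphi(\mathbf Z_s^{n,z})\mathbf 1_{\{s\leq\tau_k^n<t\}}\big].
\end{align*}
The second term is bounded in absolute value by $\|\varphi\|_\infty\,\mathbf P_x(s\leq\tau_k^n<t)$, which is the last term of the claimed bound.

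For the first term, we further split on the event $A:=\{L_t^n\leq L,\ M_t^n\leq M\}$ and its complement. On $\{t\leq\tau_k^n\}\cap A^c$ we bound $|\varphi(\mathbf Z_t^{n,z})-\varphi(\mathbf Z_s^{n,z})|\leq 2\|\varphi\|_\infty$. A union bound then gives the contribution
\[
2\|\varphi\|_\infty\big(\mathbf P_x(t\leq\tau_k^n,L_t^n>L)+\mathbf P_x(t\leq\tau_k^n,M_t^n>M)\big).
\]
On $\{t\leq\tau_k^n\}\cap A$ we use the mean value inequality, $|\varphi(\mathbf Z_t^{n,z})-\varphi(\mathbf Z_s^{n,z})|\leq\|\nabla\varphi\|_\infty\,|\mathbf Z_t^{n,z}-\mathbf Z_s^{n,z}|$, together with
\[
\mathbf Z_t^{n,z}-\mathbf Z_s^{n,z}=\int_s^t F_{\mathbf I_r^n}(\mathbf Z_r^{n,z},\mathbf X_r^n)\,\mathrm dr .
\]
By \eqref{hyp:Lip} and \eqref{hyp:Fbound}, the integrand satisfies $|F_{\mathbf I_r^n}(\mathbf Z_r^{n,z},\mathbf X_r^n)|\leq L_{\mathbf I_r^n}|\mathbf Z_r^{n,z}|+M_{\mathbf I_r^n}$. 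For $r\leq t$ on $A$, this is at most $L\,|\mathbf Z_r^{n,z}|+M$.

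The key step is to bound $\sup_{r\leq t}|\mathbf Z_r^{n,z}|$ on $A$. We do not apply Lemma~\ref{lem:majorationZn} with horizon $T$, since it involves $L_T^n,M_T^n$ rather than $L_t^n,M_t^n$. Instead we rerun its Gronwall argument on $[0,t]$, which gives $|\mathbf Z_r^{n,z}|\leq(|z|+tM_t^n)e^{tL_t^n}\leq(|z|+TM)e^{TL}$ for $r\leq t<T$ on $A$. Hence $|\mathbf Z_t^{n,z}-\mathbf Z_s^{n,z}|\leq(t-s)\big(L(|z|+TM)e^{TL}+M\big)$ on this event. Taking expectations yields the first term of the claimed bound, and summing the three contributions concludes.

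The only point needing care is measurability and the definition of $\mathbf Z^{n,z}$ up to $\tau_k^n$: on $\{t\leq\tau_k^n\}$ the process is well defined and continuous on $[0,t]$, so the integral representation and Gronwall apply pathwise there. This is the step I expect to be the (mild) obstacle; everything else is direct bookkeeping.
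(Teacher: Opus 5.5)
Your proof is correct and follows the paper's argument essentially step for step: the same split into $\{t\le\tau_k^n\}$ and $\{s\le\tau_k^n<t\}$, the same truncation on $\{L_t^n\le L,\,M_t^n\le M\}$ with a union bound on the complement, and the same estimate $|\mathbf Z_t^{n,z}-\mathbf Z_s^{n,z}|\le\int_s^t(L|\mathbf Z_r^{n,z}|+M)\,\mathrm dr$ combined with a Gronwall bound on $|\mathbf Z_r^{n,z}|$. Rerunning the Gronwall argument on $[0,t]$ rather than citing Lemma~\ref{lem:majorationZn} as stated is what the paper implicitly does too, since it writes that lemma's bound with $L_t^n,M_t^n$ in place of $L_T^n,M_T^n$.
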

\begin{proof}
Let $n,k\geq 1$, $x\in E$ and $z\in\mathbb R^d$, then for all $0<s<t<T$ we have
\begin{align*}
\lvert\phi_t^{n,k}(z,x)-\phi_s^{n,k}(z,x)\rvert&\leq\mathbf E_x\big[\lvert\varphi(\mathbf Z_t^{n,z})\,\mathbf 1_{\{t\leq\tau_k^n\}}-\varphi(\mathbf Z_s^{n,z})\,\mathbf 1_{\{s\leq\tau_k^n\}}\rvert\big]\\
&=\mathbf E_x\Big[\big\lvert\varphi(\mathbf Z_t^{n,z})-\varphi(\mathbf Z_s^{n,z})\big\rvert\,\mathbf 1_{\{t\leq\tau_k^n\}}\Big]+\mathbf E_x\big[\lvert\varphi(\mathbf Z_s^{n,z})\rvert\,\mathbf 1_{\{s\leq\tau_k^n<t\}}\big]\\
&\leq\mathbf E_x\Big[\big\lvert\varphi(\mathbf Z_t^{n,z})-\varphi(\mathbf Z_s^{n,z})\big\rvert\,\mathbf 1_{\{t\leq\tau_k^n\}}\Big]+\|\varphi\|_\infty\,\mathbf P_x(s\leq\tau_k^n<t).
\end{align*}
For all $L,M>0$, we have
\begin{align*}
\mathbf E_x\Big[\big\lvert\varphi(\mathbf Z_t^{n,z})-\varphi(\mathbf Z_s^{n,z})\big\rvert\,\mathbf 1_{\{t\leq\tau_k^n\}}\Big]&\leq\mathbf E_x\Big[\big\lvert\varphi(\mathbf Z_t^{n,z})-\varphi(\mathbf Z_s^{n,z})\big\rvert\,\mathbf 1_{\{t\leq\tau_k^n\}}\,\mathbf 1_{\{L_t^n>L\}}\Big]\\
&+\mathbf E_x\Big[\big\lvert\varphi(\mathbf Z_t^{n,z})-\varphi(\mathbf Z_s^{n,z})\big\rvert\,\mathbf 1_{\{t\leq\tau_k^n\}}\,\mathbf 1_{\{M_t^n>M\}}\Big]\\
&+\mathbf E_x\Big[\big\lvert\varphi(\mathbf Z_t^{n,z})-\varphi(\mathbf Z_s^{n,z})\big\rvert\,\mathbf 1_{\{t\leq\tau_k^n\}}\,\mathbf 1_{\{L_t^n\leq L,M_t^n\leq M\}}\Big].
\end{align*}
The two first terms above are bounded above by
\begin{align*}
2\,\|\varphi\|_\infty\,\big(\mathbf P_x(t\leq\tau_k^n,L_t^n>L)+\mathbf P_x(t\leq\tau_k^n,M_t^n>M)\big).
\end{align*}
For the last term we have
\begin{multline*}
\mathbf E_x\Big[\big\lvert\varphi(\mathbf Z_t^{n,z})-\varphi(\mathbf Z_s^{n,z})\big\rvert\,\mathbf 1_{\{t\leq\tau_k^n\}}\,\mathbf 1_{\{L_t^n\leq L,M_t^n\leq M\}}\Big]\\
\leq\|\nabla\varphi\|_\infty\,\mathbf E_x\big[\lvert\mathbf Z_t^{n,z}-\mathbf Z_s^{n,z}\rvert\,\mathbf 1_{\{t\leq\tau_k^n\}}\,\mathbf 1_{\{L_t^n\leq L,M_t^n\leq M\}}\big]
\end{multline*}
We have
\begin{align*}
\mathbf Z_t^{n,z}-\mathbf Z_s^{n,z}&=\int_s^tF_{\mathbf I_u^n}(\mathbf Z_u^{n,z},\mathbf X_u^n)\,\mathrm du\\
&=\int_s^t\big[F_{\mathbf I_u^n}(\mathbf Z_u^{n,z},\mathbf X_u^n)-F_{\mathbf I_u^n}(0,\mathbf X_u^n)\big]\,\mathrm du+\int_s^tF_{\mathbf I_u^n}(0,\mathbf X_u^n)\,\mathrm du.
\end{align*}
So
\begin{align*}
\lvert\mathbf Z_t^{n,z}-\mathbf Z_s^{n,z}\rvert\,\mathbf 1_{\{t\leq\tau_k^n\}}&\leq\int_s^tL_{\mathbf I_u^n}\,\lvert\mathbf Z_u^{n,z}\rvert\,\mathrm du+(t-s)\,\sup_{u\in[s,t]}M_{\mathbf I_u^n}\\
&\leq L_t^n\int_s^t\lvert\mathbf Z_u^{n,z}\rvert\,\mathrm du+(t-s)\,M_t^n,
\end{align*}
using Lemma \ref{lem:majorationZn} we have
\begin{align*}
\lvert\mathbf Z_t^{n,z}-\mathbf Z_s^{n,z}\rvert\leq(t-s)\Big(L_t^n\,\big(\lvert z\rvert+T\,M_t^n)\,e^{TL_t^n}+M_t^n\Big),
\end{align*}
and 
\begin{align*}
\mathbf E_x\big[\lvert\mathbf Z_t^{n,z}-\mathbf Z_s^{n,z}\rvert\,\mathbf 1_{\{t\leq\tau_k^n\}}\,\mathbf 1_{\{L_t^n\leq L,M_t^n\leq M\}}\big]\leq(t-s)\Big(L\,\big(\lvert z\rvert+TM)\,e^{TL}+M\Big)
\end{align*}
which concludes the proof.
\end{proof}

\subsection{Proof of Theorem \ref{th:conv}: convergence in law of \texorpdfstring{$\mathbf Z^n$}{Zn}}
\label{subsec:proofConv}
Having established the necessary preliminary results, we are now ready to prove the theorem.
\begin{proof}[Proof of Theorem \ref{th:conv}]
We make the proof by induction on $k\geq1$. For $k=1$, it is none other than the Proposition \ref{prop:conv1}. Using the notation of Lemma \ref{lem:majoration1}, we assume that we have the following recurrence hypothesis 
\begin{align}
\label{hyp:reccurence2}
\phi_T^{n,k}(z,x_i)\underset{n\to\infty}{\longrightarrow}\phi_T^k(z,x_i):=\mathbf E_{x_i}\big[\varphi(\bar{\mathbf Z}_T^z)\,\mathbf 1_{\{T\leq\tau_k\}}\big],\quad\forall T>0, i\in I\quad\textup{and}\quad x_i\in E_i.
\end{align} 
For $k+1$ we have
\begin{align*}
\mathbf E_{x_i}\big[\varphi(\mathbf Z_T^{n,z})\,\mathbf 1_{\{T\leq\tau_{k+1}^n\}}\big]&=\mathbf E_{x_i}\big[\varphi(\mathbf Z_T^{n,z})\,\mathbf 1_{\{T\leq\tau_1^n\}}\big]+\mathbf E_{x_i}\big[\varphi(\mathbf Z_T^{n,z})\,\mathbf 1_{\{\tau_1^n<T\leq\tau_{k+1}^n\}}\big]
\end{align*}
\textit{Step 1} By the Proposition \ref{prop:conv1} the first term above goes to 
\begin{align*}
\mathbf E_{x_i}\big[\varphi(\bar{\mathbf Z}_T^z)\,\mathbf 1_{\{T\leq\tau_1\}}\big].
\end{align*}
For the second, on the event $\{\tau_1^n<T\leq\tau_{k+1}^n\}$ we have
\begin{align*}
\varphi(\mathbf Z_T^{n,z})&=\varphi(\mathbf Z_{\tau_1^n}^{n,z})+\int_0^{T-\tau_1^n}\nabla\varphi(\mathbf Z_{t+\tau_1^n}^{n,z})\,F_{\mathbf I_{t+\tau_1^n}^n}(\mathbf Z_{t+\tau_1^n}^{n,z},\mathbf X_{t+\tau_1^n}^n)\,\mathrm dt
\end{align*}
so by the Strong Markov property applied to $(\mathbf Z^{n,z},\mathbf X^n)$ at time $\tau_1^n$ we have
\begin{align*}
\mathbf E_{x_i}\big[\varphi(\mathbf Z_T^{n,z})\,\mathbf 1_{\{\tau_1^n<T\leq\tau_{k+1}^n\}}\big]&=\mathbf E_{x_i}\big[\mathbf 1_{\{\tau_1^n<T\}}\,\phi_{T-\tau_1^n}^{n,k}(\mathbf Z_{\tau_1^n}^{n,z},\mathbf X_{\tau_1^n}^n)\big],
\end{align*}
which is equal to
\begin{align*}
\mathbf E_{x_i}\big[\mathbf 1_{\{\tau_1^n\vee\tau_1<T\}}\,\phi_{T-\tau_1^n}^{n,k}(\mathbf Z_{\tau_1^n}^{n,z},\mathbf X_{\tau_1^n}^n)\big]+\mathbf E_{x_i}\big[\mathbf 1_{\{\tau_1^n<T\leq\tau_1\}}\,\phi_{T-\tau_1^n}^{n,k}(\mathbf Z_{\tau_1^n}^{n,z},\mathbf X_{\tau_1^n}^n)\big].
\end{align*}
The second term above is majorated by
\begin{align*}
\|\varphi\|_\infty\,\mathbf P_{x_i}(\tau_1^n<T\leq\tau_1),
\end{align*}
which goes to zero when $n$ goes to infinity.
By addition and substraction, the first term is equal to
\begin{multline*}
\mathbf E_{x_i}\Big[\mathbf 1_{\{\tau_1^n\vee\tau_1<T\}}\,\big(\phi_{T-\tau_1^n}^{n,k}(\mathbf Z_{\tau_1^n}^{n,z},\mathbf X_{\tau_1^n}^n)-\phi_{T-\tau_1^n}^{n,k}(\bar{\mathbf Z}_{\tau_1}^z,\mathbf X_{\tau_1^n}^n)\big)\Big]\\
+\mathbf E_{x_i}\big[\mathbf 1_{\{\tau_1^n\vee\tau_1<T\}}\,\phi_{T-\tau_1^n}^{n,k}(\bar{\mathbf Z}_{\tau_1}^z,\mathbf X_{\tau_1^n}^n)\big].
\end{multline*}
\textit{Step 2} Let $L>0$, using Lemma \ref{lem:majoration1}, we have
\begin{align*}
\lvert(\phi_{T-\tau_1^n}^{n,k}(\mathbf Z_{\tau_1^n}^{n,z},\mathbf X_{\tau_1^n}^n)-\phi_{T-\tau_1^n}^{n,k}(\bar{\mathbf Z}_{\tau_1}^z,\mathbf X_{\tau_1^n}^n)\big\rvert\leq2\,\|\varphi\|_\infty\,\mathbf P_{\mathbf X_{\tau_1^n}^n}(L_{\tau_k^n}^n>L)+\lvert\mathbf Z_{\tau_1^n}^{n,z}-\bar{\mathbf Z}_{\tau_1}^z\rvert\,e^{TL}
\end{align*}
therefore,
\begin{align*}
\mathbf E_{x_i}\Big[\mathbf 1_{\{\tau_1^n\vee\tau_1<T\}}\,\big(\phi_{T-\tau_1^n}^{n,k}(\mathbf Z_{\tau_1^n}^{n,z}&,\mathbf X_{\tau_1^n}^n)-\phi_{T-\tau_1^n}^{n,k}(\bar{\mathbf Z}_{\tau_1}^z,\mathbf X_{\tau_1^n}^n)\big)\Big]\\
&\leq2\,\|\varphi\|_\infty\,\mathbf E_{x_i}\big[\mathbf 1_{\{\tau_1^n<T\}}\,\mathbf P_{\mathbf X_{\tau_1^n}^n}(T-t\leq\tau_k^n,L_{T-t}^n>L)_{t=\tau_1^n}\big]\\
&+\mathbf E_{x_i}\big[\lvert\mathbf Z_{\tau_1^n}^{n,z}-\bar{\mathbf Z}_{\tau_1}^z\rvert\,\mathbf 1_{\{\tau_1^n\vee\tau_1<T\}}\,e^{(T-\tau_1)L}\big]\\
&\leq2\,\|\varphi\|_\infty\,\mathbf E_{x_i}\big[\mathbf 1_{\{\tau_1^n<T\}}\,\mathbf P_{\mathbf X_{\tau_1^n}^n}(T-t\leq\tau_k^n,L_{T-t}^n>L)_{t=\tau_1^n}\big]\\
&+\mathbf E_{x_i}\big[\lvert\mathbf Z_{\tau_1^n}^{n,z}-\bar{\mathbf Z}_{\tau_1}^z\rvert\,\mathbf 1_{\{\tau_1^n\vee\tau_1<T\}}\big]\,e^{TL}.
\end{align*}
For all $L>0$, $n,k\geq 1$ and $x\in E$
\begin{align*}
\mathbf P_x(T-t\leq\tau_k^n,L_{T-t}^n>L)&=\mathbf P_x\big(T-t\leq\tau_k^n,\sup_{s\in[0,T-t]}L_{\mathbf I_t^n}>L\big)\\
&=\mathbf P_x\big(T-t\leq\tau_k^n,\max_{0\leq i\leq k}L_{\mathbf I_{\tau_i^n}^n}>L\big)\\
&\leq\sum_{j=0}^k\mathbf P_x(T-t\leq\tau_k^n,L_{\mathbf I_{\tau_j^n}^n}>L).
\end{align*}
By Remark 3.5. following Corollary 3.3 in \cite{kagan2025averaging} we have for all $x\in E$ and $f$ a bounded measurable function
\begin{align*}
\mathbf E_x\big[f(\mathbf I_{\tau_j^n}^n)\,\mathbf 1_{\{T-t\leq \tau_k^n\}}\big]\underset{n\to\infty}{\longrightarrow}\mathbf E_x\big[f(\mathbf I_{\tau_j})\,\mathbf 1_{\{T-t\leq \tau_k\}}\big],\quad\forall j,k\geq0.
\end{align*}
Hence
\begin{align}
\label{eq:convL}
\mathbf P_x(T-t\leq\tau_k^n,L_{\mathbf I_{\tau_j^n}^n}>L)\underset{n\to\infty}{\longrightarrow}\mathbf P_x(T-t\leq\tau_k,L_{\mathbf I_{\tau_j}}>L).
\end{align}
Now by definition of the process $\mathbf X^n$ we have
\begin{align*}
&\mathbf E_{x_i}\big[\mathbf 1_{\{\tau_1^n<T\}}\,\mathbf P_{\mathbf X_{\tau_1^n}^n}(T-t\leq\tau_k^n,L_{T-t}^n>L)_{t=\tau_1^n}\big]\\
&=\mathbf E_{x_i}\Big[\int_0^T\mathrm dt\,b(X_{nt})\,G'\Big(\int_0^tb(X_{ns})\,\mathrm ds\Big)\int_E\pi(X_{nt},\mathrm dy)\,\mathbf P_y(T-t\leq\tau_k^n,L_{\mathbf I_{\tau_j^n}^n}>L)\Big]\\
&=\mathbf E_{x_i}\Big[\int_0^T\mathrm dt\,b(X_{nt})\,\Big(G'\Big(\int_0^tb(X_{ns})\,\mathrm ds\Big)-G'\big(t\mu_i(b)\big)\Big)\\
&\times\int_E\pi(X_{nt},\mathrm dy)\,\mathbf P_y(T-t\leq\tau_k^n,L_{\mathbf I_{\tau_j^n}^n}>L)\Big]\\
&+\mathbf E_{x_i}\Big[\int_0^T\mathrm dt\,b(X_{nt})\,G'\big(t\mu_i(b)\big)\int_E\pi(X_{nt},\mathrm dy)\,\mathbf P_y(T-t\leq\tau_k^n,L_{\mathbf I_{\tau_j^n}^n}>L)\Big].
\end{align*}
Using the same computation as in the proof of Proposition 3.1 in Section 5.1 of \cite{kagan2025averaging}, the first term above goes to zero as $n$ goes to infinity. By Lemma 5.1 in \cite{kagan2025averaging} and \eqref{eq:convL}, we have
\begin{align*}
\limsup_n\mathbf E_{x_i}\big[\mathbf 1_{\{\tau_1^n<T\}}\,\mathbf P_{\mathbf X_{\tau_1^n}^n}(T-t\leq\tau_k^n,L_{T-t}^n>L)_{t=\tau_1^n}\big]\leq\sum_{j=0}^k\mathbf E_i\big[\mathbf 1_{\{\tau_1<T\}}\,\mathbf P_{\mathbf I_{\tau_1}}(L_{\mathbf I_{\tau_j}}>L)\big].
\end{align*}
By the Proposition \ref{prop:ConvTemps1L1}
\begin{align*}
\mathbf E_{x_i}\big[\lvert\mathbf Z_{\tau_1^n}^{n,z}-\bar{\mathbf Z}_{\tau_1}^z\rvert\,\mathbf 1_{\{\tau_1^n\vee\tau_1<T\}}\big]\underset{n\to\infty}{\longrightarrow}0.
\end{align*}
Combining the inequalities, we have, for all $L>0$
\begin{multline*}
\limsup_n\mathbf E_{x_i}\Big[\mathbf 1_{\{\tau_1^n\vee\tau_1<T\}}\,\big(\phi_{T-\tau_1^n}^{n,k}(\mathbf Z_{\tau_1^n}^{n,z},\mathbf X_{\tau_1^n}^n)-\phi_{T-\tau_1^n}^{n,k}(\bar{\mathbf Z}_{\tau_1}^z,\mathbf X_{\tau_1^n}^n)\big)\Big]\\
\leq2\|\varphi\|_\infty\sum_{j=0}^k\mathbf E_i\big[\mathbf 1_{\{\tau_1<T\}}\,\mathbf P_{\mathbf I_{\tau_1}}(L_{\mathbf I_{\tau_j}}>L)\big]
\end{multline*}
Finally using assumption \eqref{hyp:Lip}, for all $j\geq0$ and $\ell\in I$
\begin{align*}
\mathbf P_\ell(L_{\mathbf I_{\tau_j}}>L)\underset{L\to\infty}{\longrightarrow}0.
\end{align*}
Since the sum is finite we get
\begin{align*}
\mathbf E_{x_i}\Big[\mathbf 1_{\{\tau_1^n\vee\tau_1<T\}}\,\big(\phi_{T-\tau_1^n}^{n,k}(\mathbf Z_{\tau_1^n}^{n,z},\mathbf X_{\tau_1^n}^n)-\phi_{T-\tau_1^n}^{n,k}(\bar{\mathbf Z}_{\tau_1}^z,\mathbf X_{\tau_1^n}^n)\big)\Big]\underset{n\to\infty}{\longrightarrow}0.
\end{align*}
\textit{Step 3} By the definition of the process $\mathbf X^n$, we have
\begin{multline*}
\mathbf E_{x_i}\big[\mathbf 1_{\{\tau_1^n\vee\tau_1<T\}}\,\phi_{T-\tau_1^n}^{n,k}(\bar{\mathbf Z}_{\tau_1}^z,\mathbf X_{\tau_1^n}^n)\big]\\
=\mathbf E_{x_i}\Big[\,\mathbf 1_{\{\tau_1<T\}}\int_0^T\mathrm dt\,b(X_{nt})\,G'\Big(\int_0^tb(X_{ns})\,\mathrm ds\Big)\int_E\pi(X_{nt},\mathrm dy)\,\phi_{T-t}^{n,k}(\bar{\mathbf Z}_{\tau_1}^z,y)\Big],
\end{multline*}
which is equal by addition and substraction to
\begin{multline*}
\mathbf E_{x_i}\Big[\,\mathbf 1_{\{\tau_1<T\}}\int_0^T\mathrm dt\,b(X_{nt})\,\Big(G'\Big(\int_0^tb(X_{ns})\,\mathrm ds\Big)-G'\big(t\mu_i(b)\big)\Big)\int_E\pi(X_{nt},\mathrm dy)\,\phi_{T-t}^{n,k}(\bar{\mathbf Z}_{\tau_1}^z,y)\Big]\\
+\mathbf E_{x_i}\Big[\,\mathbf 1_{\{\tau_1<T\}}\int_0^T\mathrm dt\,b(X_{nt})\,G'\big(t\mu_i(b)\big)\int_E\pi(X_{nt},\mathrm dy)\,\phi_{T-t}^{n,k}(\bar{\mathbf Z}_{\tau_1}^z,y)\Big].
\end{multline*}
Using the same technics as in the proof of the Proposition 2.3. in \cite{kagan2025averaging} the first term above goes to zero when $n$ goes to infinity.\\
\textit{Step 4} For all $N\geq 1$ we have the following decomposition
\begin{multline*}
\int_0^T\mathrm dt\,b(X_{nt})\,G'\big(t\mu_i(b)\big)\int_E\pi(X_{nt},\mathrm dy)\,\phi_{T-t}^{n,k}(\bar{\mathbf Z}_{\tau_1}^z,y)\\
=\sum_{j=0}^{N-1}\int_{jT/N}^{(j+1)T/N}\mathrm dt\,b(X_{nt})\,G'\big(t\mu_i(b)\big)\int_E\pi(X_{nt},\mathrm dy)\,\phi_{T-t}^{n,k}(\bar{\mathbf Z}_{\tau_1}^z,y).
\end{multline*}
By addition and substraction we have
\begin{align*}
&\int_{jT/N}^{(j+1)T/N}\mathrm dt\,b(X_{nt})\,G'\big(t\mu_i(b)\big)\int_E\pi(X_{nt},\mathrm dy)\,\phi_{T-t}^{n,k}(\bar{\mathbf Z}_{\tau_1}^z,y)\\
&=\int_{jT/N}^{(j+1)T/N}\mathrm dt\,b(X_{nt})\,G'\big(t\mu_i(b)\big)\int_E\pi(X_{nt},\mathrm dy)\,\big(\phi_{T-t}^{n,k}(\bar{\mathbf Z}_{\tau_1}^z,y)-\phi_{T-jT/N}^{n,k}(\bar{\mathbf Z}_{\tau_1}^z,y)\big)\\
&+\int_{jT/N}^{(j+1)T/N}\mathrm dt\,b(X_{nt})\,G'\big(t\mu_i(b)\big)\int_E\pi(X_{nt},\mathrm dy)\,\phi_{T-jT/N}^{n,k}(\bar{\mathbf Z}_{\tau_1}^z,y).
\end{align*}
By Lemma \ref{lem:majoration2}, for all $L,M>0$ we have
\begin{align*}
\lvert\phi_{T-t}^{n,k}(\bar{\mathbf Z}_{\tau_1}^z,y)-\phi_{T-jT/N}^{n,k}(\bar{\mathbf Z}_{\tau_1}^z,y)\rvert&\leq(t-jT/N)\,\Big(L\,\big(\lvert\bar{\mathbf Z}_{\tau_1}^z\rvert+T\,M\big)\,e^{TL}+M\Big)\\
&+\|\varphi\|_\infty\,\mathbf P_y(T-t\leq\tau_k^n,L_{T-t}^n>L)\\
&+\|\varphi\|_\infty\,\mathbf P_y(T-t\leq\tau_k^n,M_{T-t}^n>M)\\
&+\|\varphi\|_\infty\,\mathbf P_y\big(T-(j+1)T/N\leq\tau_k^n<T-jT/N\big)\\
&\leq\frac{T}{N}\,\Big(L\,\big(\lvert\bar{\mathbf Z}_{\tau_1}^z\rvert+T\,M\big)\,e^{TL}+M\Big)\\
&+\|\varphi\|_\infty\,\mathbf P_y(T-t\leq\tau_k^n,L_{T-t}^n>L)\\
&+\|\varphi\|_\infty\,\mathbf P_y(T-t\leq\tau_k^n,M_{T-t}^n>M)\\
&+\|\varphi\|_\infty\,\mathbf P_y\big(T-(j+1)T/N\leq\tau_k^n<T-jT/N\big).
\end{align*}
Then 
\begin{align*}
&\Big\lvert\sum_{j=0}^{N-1}\int_{jT/N}^{(j+1)T/N}\mathrm dt\,b(X_{nt})\,G'\big(t\mu_i(b)\big)\int_E\pi(X_{nt},\mathrm dy)\,\big(\phi_{T-t}^{n,k}(\bar{\mathbf Z}_{\tau_1}^z,y)-\phi_{T-jT/N}^{n,k}(\bar{\mathbf Z}_{\tau_1}^z,y)\big)\Big\rvert\\
&\leq\frac{T}{N}\,\Big(L\,\big(\lvert\bar{\mathbf Z}_{\tau_1}^z\rvert+T\,M\big)\,e^{TL}+M\Big)\,G'\big(T\mu_i(b)\big)+\|\varphi\|_\infty\,\sum_{j=0}^{N-1}\int_{jT/N}^{(j+1)T/N}\mathrm dt\,b(X_{nt})\\
&\times G'\big(t\mu_i(b)\big)\int_E\pi(X_{nt},\mathrm dy)\,\big(\mathbf P_y(T-t\leq\tau_k^n,L_{T-t}^n>L)+\mathbf P_y(T-t\leq\tau_k^n,M_{T-t}^n>M)\\
&+\mathbf P_y(T-(j+1)T/N\leq\tau_k^n<T-jT/N)\big).
\end{align*}
As in the step $2$ of this proof, we have
\begin{align*}
\mathbf P_y(T-t\leq\tau_k^n,L_{T-t}^n>L)\leq\sum_{\ell=0}^k\mathbf P_y(T-t\leq\tau_k^n,L_{\mathbf I_{\tau_\ell^n}^n}>L),
\end{align*}
and 
\begin{align*}
\mathbf P_y(T-t\leq\tau_k^n,L_{T-t}^n>L)\leq\sum_{\ell=0}^k\mathbf P_y(T-t\leq\tau_k^n,L_{\mathbf M_{\tau_\ell^n}^n}>M).
\end{align*}
So, using Lemma 5.1. in \cite{kagan2025averaging} as in the step 2 the following term
\begin{multline*}
\mathbf E_{x_i}\Big[\int_{jT/N}^{(j+1)T/N}\mathrm dt\,b(X_{nt})\,G'\big(t\mu_i(b)\big)\int_E\pi(X_{nt},\mathrm dy)\,\Big(\sum_{\ell=0}^k\mathbf P_y(T-t\leq\tau_k^n,L_{\mathbf I_{\tau_\ell^n}^n}>L)\\
+\sum_{\ell=0}^k\mathbf P_y(T-t\leq\tau_k^n,M_{\mathbf I_{\tau_\ell^n}^n}>M)+\mathbf P_y(T-(j+1)T/N\leq\tau_k^n<T-jT/N)\Big)\Big]
\end{multline*}
goes to 
\begin{multline*}
\int_{jT/N}^{(j+1)T/N}\mathrm dt\,G'\big(t\mu_i(b)\big)\int_E\mu_i(\mathrm dx)\,b(x)\int_E\pi(x,\mathrm dy)\,\Big(\sum_{\ell=0}^k\mathbf P_y(T-t\leq\tau_k,L_{\mathbf I_{\tau_\ell}}>L)\\
+\sum_{\ell=0}^k\mathbf P_y(T-t\leq\tau_k,M_{\mathbf I_{\tau_\ell}}>M)+\mathbf P_y(T-(j+1)T/N\leq\tau_k<T-jT/N)\Big),
\end{multline*}
as $n$ goes to infinity and it is dominated by
\begin{multline*}
\sum_{\ell=0}^k\frac{T}{N}\int_E\mu_i(\mathrm dx)\,b(x)\int_E\pi(x,\mathrm dy)\,\Big(\sum_{\ell=0}^k\mathbf P_y(L_{\mathbf I_{\tau_\ell}}>L)+\sum_{\ell=0}^k\mathbf P_y(M_{\mathbf I_{\tau_\ell}}>M)\\
+\mathbf P_y(T-(j+1)T/N\leq\tau_k<T-jT/N)\Big).
\end{multline*}
Finally, for all $N\geq1$ and $L,M>0$ we have
\begin{multline*}
\limsup_n\mathbf E_{x_i}\Big[\mathbf 1_{\{\tau_1<T\}}\sum_{j=0}^{N-1}\int_{jT/N}^{(j+1)T/N}\mathrm dt\,b(X_{nt})\,G'\big(t\mu_i(b)\big)\int_E\pi(X_{nt},\mathrm dy)\\
\times\big(\phi_{T-t}^{n,k}(\bar{\mathbf Z}_{\tau_1}^z,y)-\phi_{T-jT/N}^{n,k}(\bar{\mathbf Z}_{\tau_1}^z,y)\big)\Big]\\
\leq\frac{T}{N}\Big(L\,\big(\mathbf E_{x_i}\big[\lvert\bar{\mathbf Z}_{\tau_1}^z\rvert\big]+T\,M\big)\,e^{TL}+M\Big)+\int_E\mu_i(\mathrm dx)\,b(x)\int_E\pi(x,\mathrm dy)\\
\times\Big(\sum_{\ell=0}^k\mathbf P_y(L_{\mathbf I_{\tau_\ell}}>L)+\sum_{\ell=0}^k\mathbf P_y(M_{\mathbf I_{\tau_\ell}}>M)+\frac{T}{N}\mathbf P_y(\tau_k<T)\Big),
\end{multline*}
which goes to
\begin{align*}
\int_E\mu_i(\mathrm dx)\,b(x)\int_E\pi(x,\mathrm dy)\sum_{\ell=0}^k\big(\mathbf P_y(L_{\mathbf I_{\tau_\ell}}>L)+\mathbf P_y(M_{\mathbf I_{\tau_\ell}}>M)\big)
\end{align*}
when $N$ goes to infinity.
Since 
\begin{align*}
\mathbf P_y(M_{\mathbf I_{\tau_\ell^n}^n}>M)\underset{L\to\infty}{\longrightarrow}0,
\end{align*}
and 
\begin{align*}
\mathbf P_y(L_{\mathbf I_{\tau_\ell^n}^n}>L)\underset{M\to\infty}{\longrightarrow}0,
\end{align*}
we have
\begin{multline*}
\limsup_n\mathbf E_{x_i}\Big[\mathbf 1_{\{\tau_1<T\}}\sum_{j=0}^{N-1}\int_{jT/N}^{(j+1)T/N}\mathrm dt\,b(X_{nt})\,G'\big(t\mu_i(b)\big)\int_E\pi(X_{nt},\mathrm dy)\\
\times\big(\phi_{T-t}^{n,k}(\bar{\mathbf Z}_{\tau_1}^z,y)-\phi_{T-jT/N}^{n,k}(\bar{\mathbf Z}_{\tau_1}^z,y)\big)\Big]=0.
\end{multline*}
\textit{Step 5} Using Lemma 5.1. in \cite{kagan2025averaging} and the induction hypothesis \eqref{hyp:reccurence2} the following term
\begin{align*}
\mathbf E_{x_i}\Big[\mathbf 1_{\{\tau_1<T\}}\,\sum_{j=0}^{N-1}\int_{jT/N}^{(j+1)T/N}\mathrm dt\,b(X_{nt})\,G'\big(t\mu_i(b)\big)\int_E\pi(X_{nt},\mathrm dy)\,\phi_{T-jT/N}^{n,k}(\bar{\mathbf Z}_{\tau_1}^z,y)\Big]
\end{align*}
goes to 
\begin{align*}
\mathbf E_{x_i}\Big[\mathbf 1_{\{\tau_1<T\}}\,\sum_{j=0}^{N-1}\int_{jT/N}^{(j+1)T/N}\mathrm dt\,G'\big(t\mu_i(b)\big)\int_E\mu_i(\mathrm dx)\,b(x)\int_E\pi(x,\mathrm dy)\,\phi_{T-jT/N}^k(\bar{\mathbf Z}_{\tau_1}^z,y)\Big]
\end{align*}
when $n$ goes to infinity.\\
\textit{Step 6} Finally, the previous term goes to
\begin{align*}
\mathbf E_{x_i}\Big[\mathbf 1_{\{\tau_1<T\}}\,\int_0^T\mathrm dt\,G'\big(t\mu_i(b)\big)\int_E\mu_i(\mathrm dx)\,b(x)\int_E\pi(x,\mathrm dy)\,\phi_{T-t}^k(\bar{\mathbf Z}_{\tau_1}^z,y)\Big]
\end{align*}
as $N$ goes to infinity. The previous term equals
\begin{align*}
\mathbf E_{x_i}\big[\mathbf 1_{\{\tau_1<T\}}\,\phi_{T-\tau_1}^k(\bar{\mathbf Z}_{\tau_1}^z,\mathbf Y_1)\big]&=
\mathbf E_{x_i}\big[\varphi(\bar{\mathbf Z}_T^z)\,\mathbf 1_{\{\tau_1<T\leq\tau_{k+1}^n\}}\big],
\end{align*}
by the strong Markov property applied at time $\tau_1$ to the process $(\bar{\mathbf Z},\mathbf Y)$ and conclues the proof of the theorem.
\end{proof}

\section{Proof of Theorem \ref{th:convFD}}
 
For simplicity of notation, and since the computations are identical in the general case, we restrict the proof to the case $N=2$. To prove the convergence of the finite-dimensional distributions, we split the argument into two cases. 

First, we consider the case where both times lie in the same jump interval, namely when there exists $k \ge 1$ such that
\[
\tau_{k-1}^n < t_1 < t_2 \le \tau_k^n.
\]

Second, we treat the case where the two times belong to distinct jump intervals, that is, when there exist $j \ge 1$ and $k \ge j+1$ such that
\[
\tau_{j-1}^n < t_1 \le \tau_j^n
\quad \text{and} \quad
\tau_{k-1}^n < t_2 \le \tau_k^n.
\]

\subsection{Finite-dimensional convergence on a single jump interval} 

\begin{proposition}
\label{prop:convFD1}
Let $x\in E$, $z\in\mathbb R^d$ and $0<t_1<t_2$, then for all $k\geq1$ and $f_1,f_2\in C_b^1(\mathbb R^d)$ we have
\begin{align*}
\mathbf E_x\big[f_1(\mathbf Z_{t_1}^{n,z})\,f_2(\mathbf Z_{t_2}^{n,z})\,\mathbf 1_{\{\tau_{k-1}^n<t_1<t_2\leq\tau_k^n\}}\big]\underset{n\to\infty}{\longrightarrow}\mathbf E_x\big[f_1(\bar{\mathbf Z}_{t_1}^z)\,f_2(\bar{\mathbf Z}_{t_2}^z)\,\mathbf 1_{\{\tau_{k-1}<t_1<t_2\leq\tau_k\}}\big].
\end{align*}
\end{proposition}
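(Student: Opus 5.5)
We argue by induction on $k\geq1$, mimicking the structure of the proof of Theorem~\ref{th:conv}.

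\textbf{Base case $k=1$.} On $\{t_2\leq\tau_1^n\}$ the process $\mathbf Z^{n,z}$ coincides with $\mathbf Z^{n,z,i}$ (with $x=x_i\in E_i$), and similarly $\bar{\mathbf Z}^z=\bar{\mathbf Z}^{z,i}$ on $\{t_2\leq\tau_1\}$; note also that $\{\tau_0<t_1\}$ always holds. We write the difference as
\[
\mathbf E_{x_i}\big[\big(f_1(\mathbf Z_{t_1}^{n,z,i})f_2(\mathbf Z_{t_2}^{n,z,i})-f_1(\bar{\mathbf Z}_{t_1}^{z,i})f_2(\bar{\mathbf Z}_{t_2}^{z,i})\big)\mathbf 1_{\{t_2\leq\tau_1^n\wedge\tau_1\}}\big]
\]
plus two error terms bounded by $\|f_1\|_\infty\|f_2\|_\infty\big(\mathbf P_{x_i}(\tau_1<t_2\leq\tau_1^n)+\mathbf P_{x_i}(\tau_1^n<t_2\leq\tau_1)\big)$. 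The error terms vanish as $n\to\infty$ by the argument in the proof of Proposition~\ref{prop:conv1}, which uses \eqref{hyp:unifintegr}, \eqref{eq:ergob} and dominated convergence. For the main term we use the elementary inequality $|ab-a'b'|\leq\|f_2\|_\infty|a-a'|+\|f_1\|_\infty|b-b'|$ together with the mean value inequality. This bounds it by
\[
\big(\|f_2\|_\infty\|\nabla f_1\|_\infty+\|f_1\|_\infty\|\nabla f_2\|_\infty\big)\sup_{t\in[0,t_2]}\mathbf E_{x_i}|\mathbf Z_t^{n,z,i}-\bar{\mathbf Z}_t^{z,i}|,
\]
which tends to $0$ by \eqref{theo:convSup}.

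\textbf{Inductive step $k\to k+1$.} On $\{\tau_k^n<t_1<t_2\leq\tau_{k+1}^n\}$ we have $\tau_1^n<t_1$. We apply the strong Markov property of $(\mathbf Z^{n,z},\mathbf X^n)$ at $\tau_1^n$ (the Proposition preceding the definition of $\mathbf I$), which gives
\[
\mathbf E_x\big[\mathbf 1_{\{\tau_1^n<t_1\}}\,\psi^{n,k}_{t_1-\tau_1^n,\,t_2-\tau_1^n}(\mathbf Z_{\tau_1^n}^{n,z},\mathbf X_{\tau_1^n}^n)\big],
\]
where
\[
\psi^{n,k}_{s_1,s_2}(y,x):=\mathbf E_x\big[f_1(\mathbf Z_{s_1}^{n,y})f_2(\mathbf Z_{s_2}^{n,y})\mathbf 1_{\{\tau_{k-1}^n<s_1<s_2\leq\tau_k^n\}}\big].
\]
The induction hypothesis gives pointwise convergence of $\psi^{n,k}$ to the analogous limit quantity $\psi^k$. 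From here we repeat Steps~2--6 of the proof of Theorem~\ref{th:conv} with $\phi^{n,k}$ replaced by $\psi^{n,k}$:
\begin{itemize}
\item[(i)] Replace $\mathbf Z_{\tau_1^n}^{n,z}$ by $\bar{\mathbf Z}_{\tau_1}^z$, using a Lipschitz-in-$y$ estimate for $\psi^{n,k}$ analogous to Lemma~\ref{lem:majoration1} and Proposition~\ref{prop:ConvTemps1L1}.
\item[(ii)] Write the law of $(\tau_1^n,\mathbf X^n_{\tau_1^n})$ through the density $b(X_{nt})G'(\int_0^tb(X_{ns})\mathrm ds)\pi(X_{nt},\mathrm dy)$, and replace $G'(\int_0^t b(X_{ns})\,\mathrm ds)$ by $G'(t\mu_i(b))$.
\item[(iii)] Discretize $[0,t_1]$ into $N$ pieces to freeze the time argument of $\psi^{n,k}$.
\item[(iv)] Pass to the limit $n\to\infty$ using Lemma~5.1 of \cite{kagan2025averaging} and the induction hypothesis, then let $N\to\infty$.
\end{itemize}
The final identification comes from the strong Markov property of $(\bar{\mathbf Z},\mathbf Y)$ at $\tau_1$.

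\textbf{Main obstacle.} The delicate point is step (iii): a time-regularity estimate for $(s_1,s_2)\mapsto\psi^{n,k}_{s_1,s_2}(y,x)$, analogous to Lemma~\ref{lem:majoration2}, under a \emph{joint} shift of both times. The Lipschitz parts are handled exactly as in Lemma~\ref{lem:majoration2}, via Lemma~\ref{lem:majorationZn}, truncating on $\{L^n\leq L,M^n\leq M\}$. However, the indicator $\mathbf 1_{\{\tau_{k-1}^n<s_1<s_2\leq\tau_k^n\}}$ now changes when either $\tau_{k-1}^n$ or $\tau_k^n$ falls in a small interval. This produces two boundary terms,
\[
\mathbf P_y\big(\tau_{k-1}^n\in[s_1-T/N,s_1)\big)+\mathbf P_y\big(\tau_k^n\in[s_2-T/N,s_2)\big).
\]
Their limits as $n\to\infty$ are controlled by the convergence in law of the jump times from \cite{kagan2025averaging}. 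The corresponding limit probabilities then vanish as $N\to\infty$ because the laws of $\tau_j$ are atomless, $G$ being absolutely continuous.
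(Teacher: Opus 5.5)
Your proposal is correct and follows essentially the same route as the paper. Both use induction on $k$. The base case combines \eqref{theo:convSup} with the jump-time comparison from the proof of Proposition~\ref{prop:conv1}. The inductive step applies the strong Markov property at $\tau_1^n$ and then reruns Steps 2--6 of the proof of Theorem~\ref{th:conv}, using two-time analogues of Lemmas~\ref{lem:majoration1} and~\ref{lem:majoration2}; the paper states these as separate lemmas, with the same boundary terms $\mathbf P_x(s_1\leq\tau_{k-1}^n<s_2)$ and $\mathbf P_x(t_1\leq\tau_k^n<t_2)$ that you identify. The only minor difference is how these boundary terms are removed: you use atomlessness of the limiting jump-time laws, whereas the paper's Step 4 sums them over the disjoint discretization cells to get a contribution of order $T/N$. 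Both arguments work.
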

To prove this Proposition by induction on $k\geq1$, we need the following two lemmas which are analogues of lemmas \ref{lem:majoration1} and \ref{lem:majoration2}
\begin{lemma}
For all $n,k\geq1$, $0<S<T$, $x\in E$ and $y\in\mathbb R^d$ we set
\begin{align*}
\phi_{S,T}^{n,k}(y,x)=\mathbf E_{x}\big[f_1(\mathbf Z_S^{n,y})\,f_2(\mathbf Z_T^{n,y})\mathbf 1_{\{\tau_{k-1}^n<S<T\leq\tau_k^n\}}\big].
\end{align*}
Let $L>0$, then for all $n,k\geq1$, $T>0$ there exist $M_T^k>0$ such that for all $z,y\in\mathbb R^d$ and $x\in E$
\begin{align*}
\lvert\phi_{S,T}^{n,k}(y,x)-\phi_{S,T}^{n,k}(z,x)\rvert&\leq2\,\|f_1\|_\infty\,\|f_2\|_\infty\,\big(\mathbf P_x(S\leq\tau_k^n,L_S^n>L)+\mathbf P_x(T\leq\tau_k^n,L_T^n>L)\big)\\
&+\lvert y-z\rvert\,\big(\|f_1'\|_\infty\,\|f_2\|_\infty\,e^{SL}+\|f_1\|_\infty\,\|f_2'\|_\infty\,e^{TL}\big),
\end{align*}
where for all $t>0$ and $n\geq1$
\begin{align*}
L_t^n:=\sup_{s\in[0,t]}L_{\mathbf I_s^n}.
\end{align*}
\end{lemma}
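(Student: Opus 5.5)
The argument mirrors the proof of Lemma~\ref{lem:majoration1}, applied separately to each of the two factors. Write $A:=\{\tau_{k-1}^n<S<T\leq\tau_k^n\}$. Since both solutions $\mathbf Z^{n,y}$ and $\mathbf Z^{n,z}$ are driven by the same trajectory of $(\mathbf X^n,\mathbf I^n)$, the event $A$ does not depend on the initial condition. We can therefore write
\begin{align*}
\phi_{S,T}^{n,k}(y,x)-\phi_{S,T}^{n,k}(z,x)
&=\mathbf E_x\big[\big(f_1(\mathbf Z_S^{n,y})-f_1(\mathbf Z_S^{n,z})\big)\,f_2(\mathbf Z_T^{n,y})\,\mathbf 1_A\big]\\
&\quad+\mathbf E_x\big[f_1(\mathbf Z_S^{n,z})\,\big(f_2(\mathbf Z_T^{n,y})-f_2(\mathbf Z_T^{n,z})\big)\,\mathbf 1_A\big].
\end{align*}
We then treat the two expectations in turn.

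For the first expectation, we split $\mathbf 1_A$ according to whether $L_S^n>L$ or $L_S^n\leq L$, and we use $A\subset\{S\leq\tau_k^n\}$.

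On $\{L_S^n>L\}$, we bound the integrand crudely by $2\|f_1\|_\infty\|f_2\|_\infty$. This contributes $2\|f_1\|_\infty\|f_2\|_\infty\,\mathbf P_x(S\leq\tau_k^n,L_S^n>L)$.

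On $\{L_S^n\leq L\}$, we use the mean value inequality for $f_1$, which gives the bound $\|f_1'\|_\infty\|f_2\|_\infty\,|\mathbf Z_S^{n,y}-\mathbf Z_S^{n,z}|$. We then apply the Gronwall estimate from the proof of Lemma~\ref{lem:majoration1}: since $L_{\mathbf I^n_u}\le L_S^n$ for $u\le S$, we get $|\mathbf Z_S^{n,y}-\mathbf Z_S^{n,z}|\leq|y-z|\,e^{SL_S^n}\leq|y-z|\,e^{SL}$.

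The second expectation is handled in the same way, now at time $T$ with $L_T^n$. The crude part gives $2\|f_1\|_\infty\|f_2\|_\infty\,\mathbf P_x(T\leq\tau_k^n,L_T^n>L)$, and the Lipschitz part gives $\|f_1\|_\infty\|f_2'\|_\infty|y-z|e^{TL}$.

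Adding the four contributions gives exactly the claimed inequality. Here $2\|f_1\|_\infty\|f_2\|_\infty$ is an upper bound for the oscillation of the product, and the indicator $\mathbf 1_A$ was simply dropped to the larger events in each case.

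There is no real obstacle. The only point to check is that the pathwise Gronwall bound holds for the full switched equation, and not only for the equation in a fixed regime. This follows because $|F_{\mathbf I^n_u}(a,\mathbf X^n_u)-F_{\mathbf I^n_u}(b,\mathbf X^n_u)|\le L_{\mathbf I_u^n}|a-b|$ holds for each $u$ by \eqref{hyp:Lip}, and $L_{\mathbf I^n_u}\leq L_t^n$ for $u\le t$. The constant $M_T^k$ mentioned in the statement plays no role in this argument.
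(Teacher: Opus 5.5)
Your proposal is correct and follows essentially the same route as the paper: the same telescoping of the product via $f_2(\mathbf Z_T^{n,y})$ and $f_1(\mathbf Z_S^{n,z})$, the enlargement of $\{\tau_{k-1}^n<S<T\leq\tau_k^n\}$ to $\{S\leq\tau_k^n\}$ and $\{T\leq\tau_k^n\}$, and the split on $\{L_S^n>L\}$ and $\{L_T^n>L\}$ combined with the pathwise Gronwall bound from Lemma~\ref{lem:majoration1}. The only cosmetic difference is that the paper first pulls out $\|f_2\|_\infty$ and $\|f_1\|_\infty$ and then quotes the argument of Lemma~\ref{lem:majoration1}, whereas you split directly on the product; both give the same constants.
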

\begin{proof}
Let $L>0$, for all $n,k\geq1$, $T>0$, $x\in E$ and $y,z\in\mathbb R^d$ we have
\begin{align*}
&\lvert\phi_{S,T}^{n,k}(y,x)-\phi_{S,T}^{n,k}(z,x)\rvert\\
&=\Big\lvert\mathbf E_x\big[f_1(\mathbf Z_S^{n,y})\,f_2(\mathbf Z_T^{n,y})\,\mathbf 1_{\{\tau_{k-1}^n<S<T\leq\tau_k^n\}}\big]-\mathbf E_x\big[f_1(\mathbf Z_S^{n,z})\,f_2(\mathbf Z_T^{n,z})\,\mathbf 1_{\{\tau_{k-1}^n<S<T\leq\tau_k^n\}}\big]\Big\rvert\\
&\leq\Big\lvert\mathbf E_x\big[(f_1(\mathbf Z_S^{n,y})-f_1(\mathbf Z_S^{n,z}))\,f_2(\mathbf Z_T^{n,y})\,\mathbf 1_{\{\tau_{k-1}^n<S<T\leq\tau_k^n\}}\big]\Big\rvert\\
&+\Big\lvert\mathbf E_x\big[(f_2(\mathbf Z_T^{n,y})-f_2(\mathbf Z_T^{n,z}))\,f_1(\mathbf Z_S^{n,z})\,\mathbf 1_{\{\tau_{k-1}^n<S<T\leq\tau_k^n\}}\big]\Big\rvert\\
&\leq\|f_2\|_\infty\,\mathbf E_x\big[\lvert f_1(\mathbf Z_S^{n,y})-f_1(\mathbf Z_S^{n,z})\rvert\,\mathbf 1_{\{S\leq\tau_k^n\}}\big]\\
&+\|f_1\|_\infty\,\mathbf E_x\big[\lvert f_2(\mathbf Z_T^{n,y})-f_2(\mathbf Z_T^{n,z})\rvert\,\mathbf 1_{\{T\leq\tau_k^n\}}\big]
\end{align*}
Then, as in the proof of Lemma \ref{lem:majoration1} we have, for all $L>0$ 
\begin{align*}
\mathbf E_x\big[\lvert f_1(\mathbf Z_S^{n,y})-f_1(\mathbf Z_S^{n,z})\rvert\,\mathbf 1_{\{S\leq\tau_k^n\}}\big]\leq2\|f_1\|_\infty\,\,\mathbf P_x(S\leq\tau_k^n,L_S^n>L)+\|f_1'\|_\infty\,\lvert y-z\rvert\,e^{SL}
\end{align*}
and
\begin{align*}
\mathbf E_x\big[\lvert f_2(\mathbf Z_T^{n,y})-f_2(\mathbf Z_T^{n,z})\rvert\,\mathbf 1_{\{T\leq\tau_k^n\}}\big]\leq2\|f_2\|_\infty\,\mathbf P_x(T\leq\tau_k^n,L_T^n>L)+\|f_2'\|_\infty\,\lvert y-z\rvert\,e^{TL}.
\end{align*}
The second one is bounded above by
\begin{align*}
2\,\|f_1\|_\infty\,\|f_2\|_\infty\,\mathbf P_x(T\leq\tau_k^n,L_T^n>L).
\end{align*}
which conclues the proof of the Lemma.
\end{proof}
\begin{lemma}
For all $0<s<t$, $n,k\geq 1$, $x\in E$ and $z\in\mathbb R^d$ we set 
\begin{align*}
\phi_{s,t}^{n,k}(z,x):=\mathbf E_x\big[f_1(\mathbf Z_s^{n,z})\,f_2(\mathbf Z_t^{n,z})\,\mathbf 1_{\{\tau_{k-1}^n<s<t\leq\tau_k^n\}}\big].
\end{align*}
Then, for all $T>0$, $0<s_1<s_2<T$, $t_1<t_2<T$ and $L,M>0$ we have
\begin{align*}
&\lvert\phi_{s_1,t_1}^{n,k}(y,x)-\phi_{s_2,t_2}^{n,k}(y,x)\rvert\\
&\leq\big(\|f_1'\|_\infty\,\|f_2\|_\infty\,(s_2-s_1)+\|f_1\|_\infty\,\|f_2'\|_\infty\,(t_2-t_1)\big)\,\Big(L\,(\lvert z\rvert+TM)\,e^{TL}+M\Big)\\
&+2\,\|f_1\|_\infty\,\|f_2\|_\infty\,\big(\mathbf P_x(s_2\leq\tau_k^n,L_{s_2}^n>L)+\mathbf P_x(s_2\leq\tau_k^n,M_{s_2}^n>M)\big)\\
&+\|f_1\|_\infty\,\|f_2\|_\infty\,\mathbf P_x(s_1\leq\tau_{k-1}^n<s_2)+\mathbf P_x(t_1\leq\tau_k^n<t_2)+\mathbf P_x(t_2\leq\tau_k^n,L_{t_2}^n>L)\\
&+\mathbf P_x(t_2\leq\tau_k^n,M_{t_2}^n>M)\big)+\mathbf P_x(s_1\leq\tau_{k-1}^n<s_2)+\mathbf P_x(t_1\leq\tau_k^n<t_2)\big).
\end{align*}
\end{lemma}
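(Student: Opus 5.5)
The plan is to mimic the proof of Lemma~\ref{lem:majoration2}: split the difference into a ``Lipschitz in time'' part and a ``change of indicator'' part. I write $A_{s,t}:=\{\tau_{k-1}^n<s<t\leq\tau_k^n\}$, so that
\begin{align*}
\phi_{s_1,t_1}^{n,k}(z,x)-\phi_{s_2,t_2}^{n,k}(z,x)
&=\mathbf E_x\big[\big(f_1(\mathbf Z_{s_1}^{n,z})f_2(\mathbf Z_{t_1}^{n,z})-f_1(\mathbf Z_{s_2}^{n,z})f_2(\mathbf Z_{t_2}^{n,z})\big)\mathbf 1_{A_{s_2,t_2}}\big]\\
&\quad+\mathbf E_x\big[f_1(\mathbf Z_{s_1}^{n,z})f_2(\mathbf Z_{t_1}^{n,z})\big(\mathbf 1_{A_{s_1,t_1}}-\mathbf 1_{A_{s_2,t_2}}\big)\big].
\end{align*}
The second term is bounded by $\|f_1\|_\infty\|f_2\|_\infty\,\mathbf E_x|\mathbf 1_{A_{s_1,t_1}}-\mathbf 1_{A_{s_2,t_2}}|$. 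The two events differ only if $\tau_{k-1}^n$ falls between $s_1$ and $s_2$, or $\tau_k^n$ falls between $t_1$ and $t_2$. This gives the terms $\mathbf P_x(s_1\leq\tau_{k-1}^n<s_2)+\mathbf P_x(t_1\leq\tau_k^n<t_2)$. Some care is needed with the strict versus non-strict inequalities at the endpoints, and the bound should be stated symmetrically so the order of $s_1,s_2$ does not matter.

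For the first term, I add and subtract $f_1(\mathbf Z_{s_2}^{n,z})f_2(\mathbf Z_{t_1}^{n,z})$. This bounds it by
\[
\|f_2\|_\infty\,\mathbf E_x\big[\lvert f_1(\mathbf Z_{s_1}^{n,z})-f_1(\mathbf Z_{s_2}^{n,z})\rvert\mathbf 1_{\{s_2\leq\tau_k^n\}}\big]+\|f_1\|_\infty\,\mathbf E_x\big[\lvert f_2(\mathbf Z_{t_1}^{n,z})-f_2(\mathbf Z_{t_2}^{n,z})\rvert\mathbf 1_{\{t_2\leq\tau_k^n\}}\big],
\]
using $A_{s_2,t_2}\subset\{s_2\leq\tau_k^n\}\cap\{t_2\leq\tau_k^n\}$. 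On each expectation I apply the computation in the proof of Lemma~\ref{lem:majoration2}. I split according to $\{L_{s_2}^n>L\}$, $\{M_{s_2}^n>M\}$ (respectively with $t_2$) and their complement. On the good event, Lemma~\ref{lem:majorationZn} and the integral representation of $\mathbf Z^{n,z}_t-\mathbf Z^{n,z}_s$ give
\[
\lvert\mathbf Z_{s_2}^{n,z}-\mathbf Z_{s_1}^{n,z}\rvert\leq (s_2-s_1)\big(L(\lvert z\rvert+TM)e^{TL}+M\big),
\]
and the analogous bound holds with $t_1,t_2$. This produces the factor $\big(\|f_1'\|_\infty\|f_2\|_\infty(s_2-s_1)+\|f_1\|_\infty\|f_2'\|_\infty(t_2-t_1)\big)\big(L(\lvert z\rvert+TM)e^{TL}+M\big)$. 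On the bad events I bound crudely by $2\|f_1\|_\infty\|f_2\|_\infty$ times $\mathbf P_x(s_2\leq\tau_k^n,L_{s_2}^n>L)+\mathbf P_x(s_2\leq\tau_k^n,M_{s_2}^n>M)$, plus the same expression at $t_2$.

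No step is deep. The only real obstacle is bookkeeping: keeping the events consistent so that each boundary-crossing probability is attached to the correct jump time. Then $\tau_{k-1}^n$ controls the first-time shift, and $\tau_k^n$ controls both the second-time shift and the truncation events. A further point is that the monotonicity $L_{s}^n\leq L_{s_2}^n$ for $s\leq s_2$ must be used so that Lemma~\ref{lem:majorationZn} applies on $[0,T]$. Collecting the three groups of terms gives the stated inequality, up to harmless constants.
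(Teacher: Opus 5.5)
Your proposal is correct and follows essentially the paper's argument: a triangle-inequality split of the product, the crossing probabilities $\mathbf P_x(s_1\leq\tau_{k-1}^n<s_2)$ and $\mathbf P_x(t_1\leq\tau_k^n<t_2)$ for the change of indicator, and the good/bad-event splitting of Lemma~\ref{lem:majoration2} combined with Lemma~\ref{lem:majorationZn}. The only difference is the order of decomposition. You separate the indicator mismatch first and then split the product on the common event $A_{s_2,t_2}$, whereas the paper splits the product first and handles the mismatch inside each factor, so the crossing probabilities appear twice in the paper's bound and once in yours. Your bound matches the intended (typographically garbled) statement up to constants.
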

\begin{proof}
Let $s_1<s_2$ and $t_1<t_2$, then, for all $x\in E$, $z\in\mathbb R^d$ and $n,k\geq1$ we have
\begin{align*}
\lvert\phi_{s_1,t_1}^{n,k}(z,x)-\phi_{s_2,t_2}^{n,k}&(z,x)\rvert\\
&=\lvert\mathbf E_{x}\big[f_1(\mathbf Z_{s_1}^{n,y})\,f_2(\mathbf Z_{t_1}^{n,y})\,\mathbf 1_{\{\tau_{k-1}^n<s_1<t_1\leq\tau_k^n\}}\big]\\
&-\mathbf E_{x}\big[f_1(\mathbf Z_{s_2}^{n,y})\,f_2(\mathbf Z_{t_2}^{n,y})\mathbf 1_{\{\tau_{k-1}^n<s_2<t_2\leq\tau_k^n\}}\big]\rvert\\
&\leq\|f_2\|_\infty\,\mathbf E_x\big[\lvert f_1(\mathbf Z_{s_1}^{n,y})\,\mathbf 1_{\{\tau_{k-1}^n<s_1<t_1\leq\tau_k^n\}}-f_1(\mathbf Z_{s_2}^{n,y})\,\mathbf 1_{\{\tau_{k-1}^n<s_2<t_2\leq\tau_k^n\}}\rvert\big]\\
&+\|f_1\|_\infty\,\mathbf E_x\big[\lvert f_2(\mathbf Z_{t_1}^{n,y})\,\mathbf 1_{\{\tau_{k-1}^n<s_1<t_1\leq\tau_k^n\}}-f_2(\mathbf Z_{t_2}^{n,y})\,\mathbf 1_{\{\tau_{k-1}^n<s_2<t_2\leq\tau_k^n\}}\rvert\big].
\end{align*}
The first term above can be rewritten as follow
\begin{align*}
\mathbf E_x\big[\lvert f_1(\mathbf Z_{s_1}^{n,y})\,\mathbf 1_{\{\tau_{k-1}^n<s_1<t_1\leq\tau_k^n\}}&-f_1(\mathbf Z_{s_2}^{n,y})\,\mathbf 1_{\{\tau_{k-1}^n<s_2<t_2\leq\tau_k^n\}}\rvert\big]\\
&=\mathbf E_x\big[\lvert f_1(\mathbf Z_{s_1}^{n,y})-f_1(\mathbf Z_{s_2}^{n,y})\rvert\,\mathbf 1_{\{\tau_{k-1}^n<s_1<t_1\leq\tau_k^n\}}\,\mathbf 1_{\{\tau_{k-1}^n<s_2<t_2\leq\tau_k^n\}}]\\
&+\mathbf E_x\big[\lvert f_1(\mathbf Z_{s_1}^{n,y})\rvert\,\mathbf 1_{\{\tau_{k-1}^n<s_1<t_1\leq\tau_k^n\}}\,\mathbf 1_{\{\tau_{k-1}^n<s_2<t_2\leq\tau_k^n\}^c}\big]\\
&+\mathbf E_x\big[\lvert f_1(\mathbf Z_{s_2}^{n,y})\rvert\,\mathbf 1_{\{\tau_{k-1}^n<s_2<t_2\leq\tau_k^n\}}\,\mathbf 1_{\{\tau_{k-1}^n<s_1<t_1\leq\tau_k^n\}^c}\big]\\
&\leq\mathbf E_x\big[\lvert f_1(\mathbf Z_{s_1}^{n,y})-f_1(\mathbf Z_{s_2}^{n,y})\rvert\,\mathbf 1_{\{s_2\leq\tau_k^n\}}\big]\\
&+\|f_1\|_\infty\,\big(\mathbf P_x(t_1\leq\tau_k^n<t_2)+\mathbf P_x(s_1\leq\tau_{k-1}^n<s_2)\big).
\end{align*}
Then, as in the proof of Lemma \ref{lem:majoration2}, for all $L,M>0$, the first term above is dominated by
\begin{gather*}
(s_2-s_1)\,\|f_1'\|_\infty\,\Big(L\,\big(\lvert z\rvert+TM\big)\,e^{TL}+M\Big)+2\,\|f_1\|_\infty\,\big(\mathbf P_x(s_2\leq\tau_k^n,L_{s_2}^n>L)\\
+\mathbf P_x(s_2\leq\tau_k^n,M_{s_2}^n>M)\big)
\end{gather*}
and we also have
\begin{align*}
&\mathbf E_x\big[\lvert f_2(\mathbf Z_{t_1}^{n,y})\,\mathbf 1_{\{\tau_{k-1}^n<s_1<t_1\leq\tau_k^n\}}-f_2(\mathbf Z_{t_2}^{n,y})\,\mathbf 1_{\{\tau_{k-1}^n<s_2<t_2\leq\tau_k^n\}}\rvert\big]\\
&\leq\mathbf E_x\big[\lvert f_2(\mathbf Z_{t_1}^{n,y})-f_2(\mathbf Z_{t_2}^{n,y})\rvert\,\mathbf 1_{\{t_2\leq\tau_k^n\}}]+\|f_2\|_\infty\,\mathbf P_x(s_1\leq\tau_{k-1}^n<s_2)+\,\|f_2\|_\infty\,\mathbf P_x(t_1\leq\tau_k^n<t_2)\\
&\leq\|f_2'\|_\infty\,(t_2-t_1)\,\Big(L\,(\lvert z\rvert+TM)\,e^{TL}+M\Big)\\
&+2\,\|f_2\|_\infty\,\big(\mathbf P_x(t_2\leq\tau_k^n,L_{t_2}^n>L)+\mathbf P_x(t_2\leq\tau_k^n,M_{t_2}^n>M)\\
&+\mathbf P_x(s_1\leq\tau_{k-1}^n<s_2)+\mathbf P_x(t_1\leq\tau_k^n<t_2)\big)
\end{align*}
which conclues the proof of the lemma.
\end{proof}
\begin{proof}[Proof of Proposition \ref{prop:convFD1}]
Let $x\in E$, $z\in\mathbb R^d$, $0<t_1<t_2$, we show by induction on $k\geq1$
\begin{align*}
\mathbf E_x\big[f_1(\mathbf Z_{t_1}^{n,z})\,f_2(\mathbf Z_{t_2}^{n,z})\,\mathbf 1_{\{\tau_{k-1}^n<t_1<t_2\leq\tau_k^n\}}\big]\underset{n\to\infty}{\longrightarrow}\mathbf E_x\big[f_1(\bar{\mathbf Z}_{t_1}^z)\,f_2(\bar{\mathbf Z}_{t_2}^z)\,\mathbf 1_{\{\tau_{k-1}<t_1<t_2\leq\tau_k\}}\big].
\end{align*}
\textit{Step 1} For $k=1$ we have
\begin{align*}
\mathbf E_x\big[f_1(\mathbf Z_{t_1}^{n,z})\,f_2(\mathbf Z_{t_2}^{n,z})\,\mathbf 1_{\{t_1<t_2\leq\tau_1^n\}}\big].
\end{align*}
Then, 
\begin{align*}
&\Big\lvert\mathbf E_x\big[f_1(\mathbf Z_{t_1}^{n,z})\,f_2(\mathbf Z_{t_2}^{n,z})\,\mathbf 1_{\{0<t_1<t_2\leq\tau_1^n\}}\big]-\mathbf E_x\big[f_1(\bar{\mathbf Z}_{t_1}^z)\,f_2(\bar{\mathbf Z}_{t_2}^z)\,\mathbf 1_{\{0<t_1<t_2\leq\tau_1\}}\big]\Big\rvert\\
&\leq\Big\lvert\mathbf E_x\big[(f_1(\mathbf Z_{t_1}^{n,z})\,\mathbf 1_{\{t_1\leq\tau_1^n\}}-f_1(\bar{\mathbf Z}_{t_1}^z)\,\mathbf 1_{\{t_1\leq\tau_1\}})\,f_2(\mathbf Z_{t_2}^{n,z})\,\mathbf 1_{\{t_2\leq\tau_1^n\}}\big]\Big\rvert\\
&+\Big\lvert\mathbf E_x\big[(f_2(\mathbf Z_{t_2}^{n,z})\,\mathbf 1_{\{t_2\leq\tau_1^n\}}-f_2(\bar{\mathbf Z}_{t_2}^z)\,\mathbf 1_{\{t_2\leq\tau_1\}})\,f_1(\bar{\mathbf Z}_{t_1}^z)\,\mathbf 1_{\{t_1\leq\tau_1\}}\big]\Big\rvert\\
&\leq\|f_2\|_\infty\,\mathbf E_x\big[\lvert f_1(\mathbf Z_{t_1}^{n,z})\,\mathbf 1_{\{t_1\leq\tau_1^n\}}-f_1(\bar{\mathbf Z}_{t_1}^z)\,\mathbf 1_{\{t_1\leq\tau_1\}}\rvert\big]\\
&+\|f_1\|_\infty\,\mathbf E_x\big[\lvert f_2(\mathbf Z_{t_2}^{n,z})\,\mathbf 1_{\{t_2\leq\tau_1^n\}}-f_2(\bar{\mathbf Z}_{t_2}^z)\,\mathbf 1_{\{t_2\leq\tau_1\}}\rvert\big].
\end{align*}
We have
\begin{align*}
\mathbf E_x\big[\lvert f_1(\mathbf Z_{t_1}^{n,z})\,\mathbf 1_{\{t_1\leq\tau_1^n\}}-f_1(&\bar{\mathbf Z}_{t_1}^z)\,\mathbf 1_{\{t_1\leq\tau_1\}}\rvert\big]\\
&=\mathbf E_x\big[\lvert f_1(\mathbf Z_{t_1}^{n,z})-f_1(\bar{\mathbf Z}_{t_1}^z)\rvert\,\mathbf 1_{\{t_1\leq\tau_1^n\wedge\tau_1\}}\big]\\
&+\mathbf E_x\big[\lvert f_1(\mathbf Z_{t_1}^{n,z})\rvert\,\mathbf 1_{\{\tau_1<t_1\leq\tau_1^n\}}\big]+\mathbf E_x\big[\lvert f_1(\bar{\mathbf Z}_{t_1}^z)\rvert\,\mathbf 1_{\{\tau_1^n<t_1\leq\tau_1\}}\big]\\
&\leq\|f_1'\|_\infty\,\mathbf E_x\big[\lvert\mathbf Z_{t_1}^{n,z}-\bar{\mathbf Z}_{t_1}^z\rvert\,\mathbf 1_{\{t_1\leq\tau_1^n\wedge\tau_1\}}\big]\\
&+\|f_1\|_\infty\,\big(\mathbf P_x(\tau_1< t_1\leq\tau_1^n)+\mathbf P_x(\tau_1^n< t_1\leq\tau_1)\big)
\end{align*}
which goes to zero when $n$ goes to infinity. Then,  we have
\begin{align*}
\mathbf E_x\big[f_1(\mathbf Z_{t_1}^{n,z})\,f_2(\mathbf Z_{t_2}^{n,z})\,\mathbf 1_{\{t_1<t_2\leq\tau_1^n\}}\big]\underset{n\to\infty}{\longrightarrow}\mathbf E_x\big[f_1(\bar{\mathbf Z}_{t_1}^z)\,f_2(\bar{\mathbf Z}_{t_2}^z)\,\mathbf 1_{\{t_1<t_2\leq\tau_1\}}\big].
\end{align*}
\textit{Step 2} For $k+1$ we have
\begin{align*}
\mathbf E_x\big[f_1(\mathbf Z_{t_1}^{n,z})\,f_2(\mathbf Z_{t_2}^{n,z})\,\mathbf 1_{\{\tau_k^n<t_1<t_2\leq\tau_{k+1}^n\}}\big].
\end{align*}
Then, for $\tau_k^n<t_1\leq\tau_{k+1}^n$ with $k\geq 1$, we have
\begin{align*}
f(\mathbf Z_{t_1}^{n,z})&=f(z)+\int_0^{t_1}f'(\mathbf Z_s^{n,z})\,F_{\mathbf I_s^n}(\mathbf Z_s^{n,z},\mathbf X_s^n)\,\mathrm ds\\
&=f(z)+\int_0^{\tau_1^n}f'(\mathbf Z_s^{n,z})\,F_{\mathbf I_s^n}(\mathbf Z_s^{n,z},\mathbf X_s^n)\,\mathrm ds+\int_{\tau_1^n}^{t_1}f'(\mathbf Z_s^{n,z})\,F_{\mathbf I_s^n}(\mathbf Z_s^{n,z},\mathbf X_s^n)\,\mathrm ds\\
&=f(\mathbf Z_{\tau_1^n}^{n,z})+\int_0^{t_1-\tau_1^n}f'(\mathbf Z_{s+\tau_1^n}^{n,z})\,F_{\mathbf I_{s+\tau_1^n}^n}(\mathbf Z_{s+\tau_1^n}^n,\mathbf X_{s+\tau_1^n}^n)\,\mathrm ds
\end{align*}
and we also have 
\begin{align*}
f(\mathbf Z_{t_2}^{n,z})&=f(\mathbf Z_{\tau_1^n}^{n,z})+\int_0^{t_2-\tau_1^n}f'(\mathbf Z_{s+\tau_1^n}^{n,z})\,F_{\mathbf I_{s+\tau_1^n}^n}(\mathbf Z_{s+\tau_1^n}^n,\mathbf X_{s+\tau_1^n}^n)\,\mathrm ds
\end{align*}
for $\tau_k^n<t_1<t_2\leq\tau_{k+1}^n$. So, by the strong Markov property applied at time $\tau_1^n$ to the process $(\mathbf Z^n,\mathbf X^n)$ we get
\begin{align*}
\mathbf E_x\big[f_1(\mathbf Z_{t_1}^{n,z})\,f_2(\mathbf Z_{t_2}^{n,z})\,\mathbf 1_{\{\tau_k^n<t_1<t_2\leq\tau_{k+1}^n\}}\big]&=\mathbf E_x\big[\mathbf 1_{\{\tau_1^n<t_1\}}\,\phi_{t_1-\tau_1^n,t_2-\tau_1^n}^{n,k}(\mathbf Z_{\tau_1^n}^{n,z},\mathbf X_{\tau_1^n}^n)\big]
\end{align*}
where for all $0<s<t$, $x\in E$ and $z\in\mathbb R^d$
\begin{align*}
\phi_{s,t}^{n,k}(z,x):=\mathbf E_x\big[f_1(\mathbf Z_s^{n,z})\,f_2(\mathbf Z_t^{n,z})\,\mathbf 1_{\{\tau_{k-1}^n<s<t\leq\tau_k^n\}}\big],\quad\forall n,k\geq1.
\end{align*}
By the induction hypothesis, for all $0<s<t$, $y\in E$ and $w\in\mathbb R^d$
\begin{align*}
\phi_{s,t}^{n,k}(w,y)\underset{n\to\infty}{\longrightarrow}\phi_{s,t}^k(w,y):=\mathbf E_x\big[f_1(\bar{\mathbf Z}_s^w)\,f_2(\bar{\mathbf Z}_t^w)\,\mathbf 1_{\{\tau_{k-1}^n<s<t\leq\tau_k^n\}}\big]\quad\forall k\geq1.
\end{align*}
Using previous lemmas and the same methods as in steps 2, 3, 4 and 5 of the proof of Theorem \ref{th:conv}, we show that 
\begin{align*}
\mathbf E_x\big[\mathbf 1_{\{\tau_1^n<t_1\}}\,\phi_{t_1-\tau_1^n,t_2-\tau_1^n}^{n,k}(\mathbf Z_{\tau_1^n}^{n,z},\mathbf X_{\tau_1^n}^n)\big]\underset{n\to\infty}{\longrightarrow} \mathbf E_x\big[\mathbf 1_{\{\tau_1<t_1\}}\,\phi_{t_1-\tau_1,t_2-\tau_1}^k(\mathbf Z_{\tau_1}^z,\mathbf Y_1)\big],
\end{align*}
Then, by applying the strong Markov at time $\tau_1$ to the process $(\mathbf Z,\mathbf Y)$ we get
\begin{align*}
\mathbf E_x\big[f_1(\mathbf Z_{t_1}^{n,z})\,f_2(\mathbf Z_{t_2}^{n,z})\,\mathbf 1_{\{\tau_k^n<t_1<t_2\leq\tau_{k+1}^n\}}\big]\underset{n\to\infty}{\longrightarrow}\mathbf E_x\big[f_1(\mathbf Z_{t_1}^z)\,f_2(\mathbf Z_{t_2}^z)\,\mathbf 1_{\{\tau_k<t_1<t_2\leq\tau_{k+1}\}}\big].
\end{align*}
which conclues the proof of Proposition \ref{prop:convFD1}.
\end{proof}

\subsection{Finite-dimensional convergence across different jump intervals}

\begin{proposition}
\label{prop:convFD2}
Let $0<t_1<t_2$, $x\in E$, $z\in\mathbb R^d$, then for all $j\geq 1$ and $f_1,f_2\in C_b^1(\mathbb R^d)$ we have
\begin{multline*}
\mathbf E_x\big[f_1(\mathbf Z_{t_1}^{n,z})\,f_2(\mathbf Z_{t_2}^{n,z})\,\mathbf 1_{\{\tau_{j-1}^n<t_1\leq\tau_j^n,\tau_{k-1}^n<t_2\leq\tau_k^n\}}\big]\\
\underset{n\to\infty}{\longrightarrow}\mathbf E_x\big[f_1(\bar{\mathbf Z}_{t_1}^z)\,f_2(\bar{\mathbf Z}_{t_2}^z)\,\mathbf 1_{\{\tau_{j-1}<t_1\leq\tau_j,\tau_{k-1}<t_2\leq\tau_k\}}\big],\quad\forall k\geq j+1.
\end{multline*}
\end{proposition}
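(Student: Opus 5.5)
The plan is to argue by induction on $j\geq1$, with $k\geq j+1$ arbitrary, using the strong Markov property at the intermediate jump time $\tau_j^n$ to decouple the two observation times. The base case $j=1$ is where the work is. On the event $\{t_1\leq\tau_1^n,\ \tau_{k-1}^n<t_2\leq\tau_k^n\}$ with $k\geq2$, the first jump occurs in $[t_1,t_2)$. Applying the strong Markov property of $(\mathbf Z^n,\mathbf X^n)$ at $\tau_1^n$ gives
\begin{align*}
\mathbf E_x\big[f_1(\mathbf Z_{t_1}^{n,z})\,f_2(\mathbf Z_{t_2}^{n,z})\,\mathbf 1_{\{t_1\leq\tau_1^n,\tau_{k-1}^n<t_2\leq\tau_k^n\}}\big]
=\mathbf E_x\big[f_1(\mathbf Z_{t_1}^{n,z})\,\mathbf 1_{\{t_1\leq\tau_1^n<t_2\}}\,\psi_{t_2-\tau_1^n}^{n,k-1}(\mathbf Z_{\tau_1^n}^{n,z},\mathbf X_{\tau_1^n}^n)\big],
\end{align*}
where
\[
\psi_t^{n,m}(w,y):=\mathbf E_y\big[f_2(\mathbf Z_t^{n,w})\,\mathbf 1_{\{\tau_{m-1}^n<t\leq\tau_m^n\}}\big].
\]
This function is the difference of the functions $\phi_t^{n,m}$ and $\phi_t^{n,m-1}$ of Lemma~\ref{lem:majoration1}, both built with $f_2$. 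By Theorem~\ref{th:conv}, it converges pointwise to its averaged analogue $\psi_t^{m}$. Lemmas~\ref{lem:majoration1} and~\ref{lem:majoration2} also give, up to remainders that are small uniformly in $n$, a Lipschitz bound in $w$ and a modulus of continuity in $t$.

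The next step is to replace $f_1(\mathbf Z_{t_1}^{n,z})$ by $f_1(\bar{\mathbf Z}_{t_1}^{z,i})$ on $\{t_1\leq \tau_1^n\}$, where $i=\phi(x)$. The error is at most $\|f_1'\|_\infty\|f_2\|_\infty\sup_{t\leq t_1}\mathbf E_x|\mathbf Z_t^{n,z,i}-\bar{\mathbf Z}_t^{z,i}|$, which vanishes by \eqref{theo:convSup}. After this replacement, $f_1(\bar{\mathbf Z}_{t_1}^{z,i})$ is deterministic given the start in $E_i$, so it factors out of the expectation. What remains is $\mathbf E_x[\mathbf 1_{\{t_1\leq\tau_1^n<t_2\}}\psi^{n,k-1}_{t_2-\tau_1^n}(\mathbf Z^{n,z}_{\tau_1^n},\mathbf X^n_{\tau_1^n})]$. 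This has exactly the structure handled in Steps 2--6 of the proof of Theorem~\ref{th:conv}, with the time window restricted to $[t_1,t_2)$, and it is treated in the same way:
\begin{itemize}
\item Step 2 replaces $\mathbf Z_{\tau_1^n}^{n,z}$ by $\bar{\mathbf Z}_{\tau_1}^z$ via Proposition~\ref{prop:ConvTemps1L1} and the Lipschitz bound.
\item Step 3 uses the jump-time density representation to replace $G'(\int_0^t b(X_{ns})\,\mathrm ds)$ by $G'(t\mu_i(b))$.
\item Step 4 discretises time and uses the continuity of $\psi$ in $t$.
\item Step 5 applies Lemma~5.1 of \cite{kagan2025averaging} together with the pointwise convergence of $\psi^{n,k-1}$.
\end{itemize}
Running these steps yields the limit $\mathbf E_x[f_1(\bar{\mathbf Z}_{t_1}^z)\mathbf 1_{\{t_1\leq\tau_1<t_2\}}\psi^{k-1}_{t_2-\tau_1}(\bar{\mathbf Z}^z_{\tau_1},\mathbf Y_1)]$. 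The strong Markov property of $(\bar{\mathbf Z},\mathbf Y)$ at $\tau_1$ then identifies this as the right-hand side of the proposition for $j=1$.

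For the induction step from $j$ to $j+1$, with $k\geq j+2$, I decompose at $\tau_1^n$ exactly as in Step 2 of the proof of Proposition~\ref{prop:convFD1}. The quantity becomes $\mathbf E_x[\mathbf 1_{\{\tau_1^n<t_1\}}\Phi^{n}_{t_1-\tau_1^n,t_2-\tau_1^n}(\mathbf Z^{n,z}_{\tau_1^n},\mathbf X^n_{\tau_1^n})]$, where $\Phi^n_{s,t}(w,y)$ denotes the two-time functional with indices $(j,k-1)$. By the induction hypothesis, $\Phi^n_{s,t}$ converges pointwise in $(w,y)$ for fixed $s<t$. For the uniform bounds, I apply the proofs of the two lemmas preceding Proposition~\ref{prop:convFD1} with the indicator $\mathbf 1_{\{\tau_{j-1}^n<s\leq\tau_j^n,\ \tau_{k-2}^n<t\leq\tau_{k-1}^n\}}$ in place of $\mathbf 1_{\{\tau_{k-1}^n<s<t\leq\tau_k^n\}}$. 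Changing the times $(s,t)$ then only introduces extra boundary terms of the form $\mathbf P_y(s_1\leq\tau_\ell^n<s_2)$, and these converge to the corresponding limit probabilities, which are small. With these bounds, Steps 2--6 of the proof of Theorem~\ref{th:conv} go through verbatim, and the strong Markov property of $(\bar{\mathbf Z},\mathbf Y)$ at $\tau_1$ closes the induction.

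The main obstacle I expect is the continuity in time of $\psi$ and $\Phi$ in Step 4, because the indicators now involve two jump times. The boundary terms $\mathbf P_y(s_1\leq\tau^n_\ell<s_2)$ must vanish uniformly after averaging against $\mu_i(\mathrm dx)b(x)\pi(x,\mathrm dy)$ as the mesh $T/N\to0$. I would handle this as in Step 4 of the proof of Theorem~\ref{th:conv}. First take $n\to\infty$ using Lemma~5.1 of \cite{kagan2025averaging}, which converts each boundary term into $\mathbf P_y(s_1\leq\tau_\ell<s_2)$. Then sum over the $N$ subintervals: these events are disjoint, so the sum is bounded by a single probability. Only then let $N\to\infty$, using that $\tau_\ell$ has an absolutely continuous law, so that its atoms do not contribute.
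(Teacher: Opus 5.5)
Your proposal follows the paper's proof essentially step for step. The paper also argues by induction on $j$ via the strong Markov property of $(\mathbf Z^n,\mathbf X^n)$ at $\tau_1^n$; in the base case it replaces $f_1(\mathbf Z_{t_1}^{n,z})$ by $f_1(\bar{\mathbf Z}_{t_1}^{z,i})$ through \eqref{theo:convSup}, reruns Steps 2--6 of the proof of Theorem~\ref{th:conv}, and closes with the strong Markov property of $(\bar{\mathbf Z},\mathbf Y)$ at $\tau_1$. Your bookkeeping is in fact slightly cleaner than the paper's: your $\psi^{n,k-1}=\phi^{n,k-1}-\phi^{n,k-2}$, with indicator $\mathbf 1_{\{\tau_{k-2}^n<t\leq\tau_{k-1}^n\}}$, is the correct post-jump functional, whereas the paper writes $\phi^{n,k}_{t_2-\tau_1^n}$ at that point.
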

To prove this Proposition, we need the two following lemmas
\begin{lemma}
Let $S,T>0$, $n,j\geq1$, $k\geq j+1$, we set for all $f_1,f_2\in C_b^1(\mathbb R^d)$, $x\in E$ and $z\in\mathbb R^d$
\begin{align*}
\phi_{S,T}^{n,j,k}:=\mathbf E_x\big[f_1(\mathbf Z_S^{n,z})\,f_2(\mathbf Z_T^{n,z})\,\mathbf 1_{\{\tau_{j-1}^n<S\leq\tau_j^n,\tau_{k-1}^n<T\leq\tau_k^n\}}\big].
\end{align*}
Then, for all $L,M>0$
\begin{align*}
\lvert\phi_{S,T}^{n,j,k}(y,x)-\phi_{S,T}^{n,j,k}(z,x)\rvert&\leq\lvert y-z\rvert\,\big(\|f_1'\|_\infty\,\|f_2\|_\infty\,e^{SL}+\|f_1\|_\infty\,\|f_2'\|_\infty\,e^{TL}\big)\\
&+2\,\|f_1\|_\infty\,\|f_2\|_\infty\,\big(\mathbf P_x(s\leq\tau_j^n,L_s^n>L)+\mathbf P_x(t\leq\tau_k^n,L_t^n>L)\big).
\end{align*}
where for all $t>0$ and $n\geq1$
\begin{align*}
L_t^n:=\sup_{s\in[0,t]}L_{\mathbf I_s^n}.
\end{align*}
\end{lemma}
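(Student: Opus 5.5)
The approach mirrors the proofs of Lemma~\ref{lem:majoration1} and of the analogous lemma for a single jump interval. The first step is to write the difference $\phi_{S,T}^{n,j,k}(y,x)-\phi_{S,T}^{n,j,k}(z,x)$ as an expectation against the common indicator $\mathbf 1_{\{\tau_{j-1}^n<S\leq\tau_j^n,\tau_{k-1}^n<T\leq\tau_k^n\}}$. This is legitimate because the jump times $\tau_\ell^n$ are functionals of $\mathbf X^n$ alone and do not depend on the initial condition of $\mathbf Z^n$. We then add and subtract $f_1(\mathbf Z_S^{n,z})f_2(\mathbf Z_T^{n,y})$, which gives
\[
\|f_2\|_\infty\,\mathbf E_x\big[\lvert f_1(\mathbf Z_S^{n,y})-f_1(\mathbf Z_S^{n,z})\rvert\,\mathbf 1_{\{S\leq\tau_j^n\}}\big]+\|f_1\|_\infty\,\mathbf E_x\big[\lvert f_2(\mathbf Z_T^{n,y})-f_2(\mathbf Z_T^{n,z})\rvert\,\mathbf 1_{\{T\leq\tau_k^n\}}\big].
\]
To reach this bound we enlarge the indicator, keeping only $\{S\leq\tau_j^n\}$ in the first term and $\{T\leq\tau_k^n\}$ in the second.

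Each of the two terms is handled exactly as in Lemma~\ref{lem:majoration1}, with $k$ replaced by $j$ in the first and by $k$ in the second. We split according to $\{L_S^n>L\}$, respectively $\{L_T^n>L\}$. On the bad event we bound the difference by $2\|f_\ell\|_\infty$, which contributes $2\|f_1\|_\infty\|f_2\|_\infty\,\mathbf P_x(S\leq\tau_j^n,L_S^n>L)$ and the analogous term at $T$. On the good event we use the mean value theorem together with the Gronwall estimate from Lemma~\ref{lem:majoration1}, namely $\lvert\mathbf Z_t^{n,y}-\mathbf Z_t^{n,z}\rvert\leq\lvert y-z\rvert e^{tL_t^n}$, which relies on the uniform Lipschitz bound \eqref{hyp:Lip} along the trajectory of $\mathbf I^n$. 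This produces $\lvert y-z\rvert\,\|f_1'\|_\infty\|f_2\|_\infty e^{SL}$ and $\lvert y-z\rvert\,\|f_1\|_\infty\|f_2'\|_\infty e^{TL}$. Summing the two terms gives the stated inequality, where the lowercase $s,t$ in the statement are read as $S,T$.

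There is no real obstacle, since the Gronwall step is already done in Lemma~\ref{lem:majoration1}. The only points that need care are bookkeeping ones. First, the indicator must be shared by the two initial conditions, which holds because the $\tau_\ell^n$ are built from $\mathbf X^n$ only. Second, one should note that $\{S\leq\tau_j^n\}$ is the right enlargement for the first term, so that its error term involves $\tau_j^n$ and not $\tau_k^n$, as in the statement.
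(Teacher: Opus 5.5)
Your proposal is correct and follows the paper's proof essentially step by step. You pass to the common indicator and split the product difference into an $\|f_2\|_\infty$-weighted $f_1$-difference at $S$ and an $\|f_1\|_\infty$-weighted $f_2$-difference at $T$, enlarge the indicators to $\{S\leq\tau_j^n\}$ and $\{T\leq\tau_k^n\}$, and treat each term as in Lemma~\ref{lem:majoration1}, splitting on $\{L_S^n>L\}$, resp.\ $\{L_T^n>L\}$, and using Gronwall on the good event. Your bookkeeping is in fact slightly cleaner than the paper's: its first display writes the indicator with shifted indices $\{\tau_j^n<S\leq\tau_{j+1}^n,\tau_k^n<T\leq\tau_{k+1}^n\}$ before enlarging to $\{S\leq\tau_j^n\}$, whereas you enlarge from $\{\tau_{j-1}^n<S\leq\tau_j^n,\tau_{k-1}^n<T\leq\tau_k^n\}$, which matches the definition of $\phi_{S,T}^{n,j,k}$.
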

\begin{proof}
Let $S,T>0$, $n,j\geq1$, $f_1,f_2\in C_b^1(\mathbb R^d)$. Then, for all $x\in E$ and $z\in\mathbb R^d$
\begin{align*}
\lvert\phi_{S,T}^{n,j,k}(y,x)&-\phi_{S,T}^{n,j,k}(z,x)\rvert\\
&\leq\mathbf E_x\big[\lvert f_1(\mathbf Z_S^{n,y})\,f_2(\mathbf Z_T^{n,y})-f_1(\mathbf Z_S^{n,z})\,f_2(\mathbf Z_T^{n,z})\rvert\,\mathbf 1_{\{\tau_j^n<S\leq\tau_{j+1}^n,\tau_k^n<T\leq\tau_{k+1}^n\}}\big]\\
&\leq\|f_2\|_\infty\,\mathbf E_x\big[\lvert f_1(\mathbf Z_S^{n,y})-f_1(\mathbf Z_S^{n,z})\rvert\,\mathbf 1_{\{S\leq\tau_j^n\}}\big]\\
&+\|f_1\|_\infty\,\mathbf E_x\big[\lvert f_2(\mathbf Z_T^{n,y})-f_2(\mathbf Z_T^{n,z})\rvert\,\mathbf 1_{\{T\leq\tau_k^n\}}\big].
\end{align*}
Then, as in proof of Lemma \ref{lem:majoration1} we have 
\begin{align*}
\mathbf E_x\big[\lvert f_1(\mathbf Z_S^{n,y})-f_1(\mathbf Z_S^{n,z})\rvert\,\mathbf 1_{\{S\leq\tau_j^n\}}\big]\leq 2\|f_1\|_\infty\,\mathbf P_x(S\leq\tau_j^n, L_S^n>L)+\|f_1'\|_\infty\,\lvert y-z\rvert\,e^{SL},
\end{align*}
and 
\begin{align*}
\mathbf E_x\big[\lvert f_2(\mathbf Z_T^{n,y})-f_2(\mathbf Z_T^{n,z})\rvert\,\mathbf 1_{\{T\leq\tau_k^n\}}\big]\leq 2\,\|f_2\|_\infty\,\mathbf P_x(T\leq\tau_k^n,L_T^n>L)+\|f_2'\|_\infty\,\lvert y-z\rvert\,e^{TL},
\end{align*}
which conclues the proof of the Lemma.
\end{proof}
\begin{lemma}
Let $0<s_1<s_2$, $0<t_1<t_2$, $x\in E$, $z\in\mathbb R^d$, then, for all $L,M>0$, $n,j\geq1$ and $j\geq k+1$ we have
\begin{align*}
\big\lvert\phi_{s_1,t_1}^{n,k}(z,x)-\phi_{s_2,t_2}^{n,k}(z,x)\big\rvert&\leq\|f_1'\|_\infty\,\|f_2\|_\infty\,(s_2-s_1)\,\Big(L\,\big(\lvert z\rvert+TM)\,e^{TL}+M\Big)\\
&+2\,\|f_1\|_\infty\,\|f_2\|_\infty\,\big(\mathbf P_x(s_2\leq\tau_j^n,L_{s_2}^n>L)+\mathbf P_x(s_2\leq\tau_j^n,M_{s_2}^n>M)\big)\\
&+\|f_1\|_\infty\,\|f_2\|_\infty\,\big(\mathbf P_x(\tau_{j-1}^n<s_1\leq\tau_j^n)+\mathbf P_x(\tau_{j-1}^n<s_2\leq\tau_j^n)\big)\\
&+\|f_1\|_\infty\,\|f_2\|_\infty\,(t_2-t_1)\,\Big(L\,\big(\lvert z\rvert+TM)\,e^{TL}+M\Big)\\
&+2\,\|f_1\|_\infty\,\|f_2'\|_\infty\,\big(\mathbf P_x(t_2\leq\tau_k^n,L_{t_2}^n>L)+\mathbf P_x(t_2\leq\tau_k^n,M_{t_2}^n>M)\big)\\
&+\|f_1\|_\infty\,\|f_2\|_\infty\,\big(\mathbf P_x(\tau_{k-1}^n<t_1\leq\tau_k^n)+\mathbf P_x\big(\tau_{k-1}^n<t_2\leq\tau_k^n)\big)
\end{align*}
where for all $t>0$ and $n\geq1$
\begin{align*}
M_t^n:=\sup_{s\in[0,t]}M_{\mathbf I_s^n}.
\end{align*}
\end{lemma}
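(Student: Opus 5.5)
We follow the same scheme as Lemma~\ref{lem:majoration2} and its single-interval analogue, with $\phi^{n,k}_{s,t}$ read as $\phi^{n,j,k}_{s,t}$, the functional defined in the previous lemma. Fix $L,M>0$ and write
\[
A_{s,t}:=\{\tau_{j-1}^n<s\leq\tau_j^n,\ \tau_{k-1}^n<t\leq\tau_k^n\}.
\]
By adding and subtracting $f_1(\mathbf Z^{n,z}_{s_2})f_2(\mathbf Z^{n,z}_{t_1})\mathbf 1_{A_{s_2,t_1}}$, we split
\[
\big\lvert\phi^{n,j,k}_{s_1,t_1}(z,x)-\phi^{n,j,k}_{s_2,t_2}(z,x)\big\rvert
\]
into an $s$-part and a $t$-part. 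In the $s$-part, $f_2(\mathbf Z^{n,z}_{t_1})$ is bounded by $\|f_2\|_\infty$. In the $t$-part, $f_1(\mathbf Z^{n,z}_{s_2})$ is bounded by $\|f_1\|_\infty$.

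\textbf{The $s$-part.} We separate the event where both indicators are on from the symmetric difference of the events. The symmetric difference is controlled by
\[
\mathbf P_x(\tau_{j-1}^n<s_1\leq\tau_j^n)+\mathbf P_x(\tau_{j-1}^n<s_2\leq\tau_j^n),
\]
times $\|f_1\|_\infty\|f_2\|_\infty$; this is the third line of the statement. On the common event we have $s_2\leq\tau_j^n$. We then split further according to $\{L^n_{s_2}>L\}$, $\{M^n_{s_2}>M\}$, and their complement.
\begin{itemize}
\item The first two pieces give $2\|f_1\|_\infty\|f_2\|_\infty\big(\mathbf P_x(s_2\leq\tau_j^n,L^n_{s_2}>L)+\mathbf P_x(s_2\leq\tau_j^n,M^n_{s_2}>M)\big)$.
\item On the complement we use $|f_1(a)-f_1(b)|\leq\|f_1'\|_\infty|a-b|$, together with the pathwise bound from the proof of Lemma~\ref{lem:majoration2}, itself based on Lemma~\ref{lem:majorationZn}:
\[
|\mathbf Z^{n,z}_{s_2}-\mathbf Z^{n,z}_{s_1}|\leq(s_2-s_1)\big(L(|z|+TM)e^{TL}+M\big).
\]
This yields the first line of the statement.
\end{itemize}

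\textbf{The $t$-part.} We argue identically with $f_2$ and the interval $(\tau_{k-1}^n,\tau_k^n]$. The symmetric difference of the events gives $\|f_1\|_\infty\|f_2\|_\infty\big(\mathbf P_x(\tau_{k-1}^n<t_1\leq\tau_k^n)+\mathbf P_x(\tau_{k-1}^n<t_2\leq\tau_k^n)\big)$, which is the last line. On the common event, $t_2\leq\tau_k^n$, and we split along $L^n_{t_2}$ and $M^n_{t_2}$. The tail pieces are bounded by $2\|f_1\|_\infty\sup|f_2|$ times $\mathbf P_x(t_2\leq\tau_k^n,L^n_{t_2}>L)+\mathbf P_x(t_2\leq\tau_k^n,M^n_{t_2}>M)$. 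The good piece is bounded by $(t_2-t_1)\big(L(|z|+TM)e^{TL}+M\big)$ times $\|f_1\|_\infty$ and the Lipschitz constant of $f_2$.

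\textbf{Matching the constants (main obstacle).} The two coefficients that the argument actually produces in the $t$-part are $\sup|f_2|$ on the tail probabilities and $\|f_2'\|_\infty$ on the increment. The statement instead places $\|f_1\|_\infty\|f_2\|_\infty$ on the $(t_2-t_1)$ term and $2\|f_1\|_\infty\|f_2'\|_\infty$ on the $t_2$-tails, i.e.\ the roles of $\|f_2\|_\infty$ and $\|f_2'\|_\infty$ are interchanged.

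I do not see a way to bound the time increment without a Lipschitz constant of $f_2$, so I do not claim to derive the displayed constants in general. The bound this plan establishes is the corrected one, with $\|f_1\|_\infty\|f_2'\|_\infty$ on the $(t_2-t_1)$ term and $2\|f_1\|_\infty\|f_2\|_\infty$ on the $t_2$-tails. The displayed form holds verbatim only when one takes the common constant $\|f_2\|_\infty\vee\|f_2'\|_\infty$ in both places, which the statement does not say.

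For the subsequent use of the lemma only two facts matter. First, the increment terms vanish as $N\to\infty$ for fixed $L,M$. Second, the tail terms vanish as $L,M\to\infty$, after passing $n\to\infty$ via Lemma~5.1 of \cite{kagan2025averaging} as in Step~4 of the proof of Theorem~\ref{th:conv}. Both facts are insensitive to which of these finite constants multiplies which term.
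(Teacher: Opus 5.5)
Your argument is the paper's: split into an $s$-part and a $t$-part, isolate the symmetric difference of the interval events, then on the common event truncate along $L^n$, $M^n$ and use the pathwise increment bound from Lemma~\ref{lem:majoration2}. You are also right that this argument, like the paper's one-line appeal to ``the same inequalities'', puts $\|f_1\|_\infty\|f_2'\|_\infty$ on the $(t_2-t_1)$ term and $2\|f_1\|_\infty\|f_2\|_\infty$ on the $t_2$-tails. The displayed constants are therefore a typo.

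The error is your conclusion that the displayed bound holds verbatim only with $\|f_2\|_\infty\vee\|f_2'\|_\infty$ in both places. The statement writes the event-boundary terms in the coarse form $\mathbf P_x(\tau_{j-1}^n<s_i\le\tau_j^n)$ and $\mathbf P_x(\tau_{k-1}^n<t_i\le\tau_k^n)$, and you bound them that way too. With those terms the displayed inequality is trivially true as written. With $A_{s,t}$ as in your proposal, $A_{s,t}\subset\{\tau_{k-1}^n<t\le\tau_k^n\}$, so
\[
\begin{aligned}
\big|\phi^{n,j,k}_{s_1,t_1}(z,x)-\phi^{n,j,k}_{s_2,t_2}(z,x)\big|
&\le\|f_1\|_\infty\|f_2\|_\infty\big(\mathbf P_x(A_{s_1,t_1})+\mathbf P_x(A_{s_2,t_2})\big)\\
&\le\|f_1\|_\infty\|f_2\|_\infty\big(\mathbf P_x(\tau_{k-1}^n<t_1\le\tau_k^n)+\mathbf P_x(\tau_{k-1}^n<t_2\le\tau_k^n)\big).
\end{aligned}
\]
This is the last line of the right-hand side by itself; the third line alone works equally well. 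Every other term on the right is nonnegative. So there is no obstacle, and you should not leave the displayed statement unproved or assert that it needs an extra hypothesis.

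The same observation undercuts your closing remark about the later use of the lemma. In the coarse form, the boundary terms do not tend to $0$ as $s_2-s_1\to0$ or $t_2-t_1\to0$, so the lemma as stated is useless for the time-discretization step. That step needs the sharper bound your argument passes through before you coarsen it, namely the exact symmetric differences
\[
\mathbf P_x(\tau_{j-1}^n<s_1\le\tau_j^n<s_2)+\mathbf P_x(s_1\le\tau_{j-1}^n<s_2\le\tau_j^n)
\]
and their analogues in $t$. This is essentially what the paper's proof writes down before invoking Lemma~\ref{lem:majoration2}. For that sharper version, your placement of $\|f_2\|_\infty$ and $\|f_2'\|_\infty$ is the correct one.
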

\begin{proof}
Let $T>0$, $0<s_1<s_2<T$, $0<t_1<t_2<T$. For all $x\in E$, $z\in\mathbb R^d$ we have 
\begin{align*}
\big\lvert\phi_{s_1,t_1}^{n,j,k}(z,x)&-\phi_{s_2,t_2}^{n,j,k}(z,x)\big\rvert\\
&\leq\,\|f_2\|_\infty\,\mathbf E_x\big[\lvert f_1(\mathbf Z_{s_1}^{n,z})-f_1(\mathbf Z_{s_2}^{n,z})\rvert\,\mathbf 1_{\{s_2\leq\tau_j^n\}}\big]\\
&+\|f_1\|_\infty\,\|f_2\|_\infty\,\big(\mathbf P_x(\tau_{j-1}^n<s_1\leq\tau_j^n<s_2)+\mathbf P_x(s_1<\tau_{j-1}^n<s_2\leq\tau_j^n)\big)\\
&+\|f_1\|_\infty\,\mathbf E_x\big[\lvert f_2(\mathbf Z_{t_1}^{n,z})-f_2(\mathbf Z_{t_2}^{n,z})\rvert\,\mathbf 1_{\{t_2\leq\tau_k^n\}}\big]\\
&+\|f_1\|_\infty\,\|f_2\|_\infty\,\big(\mathbf P_x(\tau_{k-1}^n<t_1\leq\tau_k^n<t_2)+\mathbf P_x\big(t_1<\tau_{k-1}^n<t_2\leq\tau_k^n)\big).
\end{align*}
The result is obtained by using the same inequalities as in the proof of Lemma \ref{lem:majoration2}. 
\end{proof}

\begin{proof}[Proof of Propostion \ref{prop:convFD2}]
We also make the proof by induction on $j\geq1$. 

\medskip

\textit{Step 1} For $j=1$ and $k\geq j+1$ we have, for all $x\in E$, $z\in\mathbb R^d$ and $f_1,f_2\in C_b^1(\mathbb R^d)$
\begin{align*}
\mathbf E_x\big[f_1(\mathbf Z_{t_1}^{n,z})\,f_2(\mathbf Z_{t_2}^{n,z})\,\mathbf 1_{\{t_1\leq\tau_1^n,\tau_{k-1}^n<t_2\leq\tau_k^n\}}\big].
\end{align*}
So, by the strong Markov property applied to $(\mathbf Z^n,\mathbf X^n)$, at time $\tau_1^n$, the previous term is equal to
\begin{align*}
\mathbf E_x\big[f_1(\mathbf Z_{t_1}^{n,z})\,\mathbf 1_{\{t_1\leq\tau_1^n<t_2\}}\,\phi_{t_2-\tau_1^n}^{n,k}(\mathbf Z_{\tau_1^n}^{n,z},\mathbf X_{\tau_1^n}^n)\big].
\end{align*}
By addition and substraction, the previous term equals
\begin{multline*}
\mathbf E_x\big[\big(f_1(\mathbf Z_{t_1}^{i,n,z})-f_1(\bar{\mathbf Z}_{t_1}^{i,z})\big)\,\mathbf 1_{\{t_1\leq\tau_1^n<t_2\}}\,\phi_{t_2-\tau_1^n}^{n,k}(\mathbf Z_{\tau_1^n}^{n,z},\mathbf X_{\tau_1^n}^n)\big]\\
+\mathbf E_x\big[f_1(\bar{\mathbf Z}_{t_1}^{i,z})\,\mathbf 1_{\{t_1\leq\tau_1^n<t_2\}}\,\phi_{t_2-\tau_1^n}^{n,k}(\mathbf Z_{\tau_1^n}^{n,z},\mathbf X_{\tau_1^n}^n)\big].
\end{multline*}
Since $f_2$ is bounded, the first term above goes to zero when $n$ goes to infinity by \ref{theo:convSup}. Since $\|f_1\|_\infty<\infty$, using the same calculations as in the proof of Theorem \ref{th:conv} the second term above goes to   
\begin{align*}
\mathbf E_x\big[f_1(\bar{\mathbf Z}_{t_1}^{i,z})\,\mathbf 1_{\{t_1\leq\tau_1^n<t_2\}}\,\phi_{t_2-\tau_1}^k(\bar{\mathbf Z}_{\tau_1}^z,\mathbf Y_1)\big]
\end{align*}
when $n$ goes to infinity. By the strong Markov property of the process $(\bar{\mathbf Z},\mathbf Y)$ at time $\tau_1$, the previous term is equal to
\begin{align*}
\mathbf E_x\big[f_1(\bar{\mathbf Z}_{t_1}^z)\,f_2(\mathbf Z_{t_2}^z)\,\mathbf 1_{\{t_1\leq\tau_1^n,\tau_{k-1}^n<t_2\leq\tau_k^n\}}\big].
\end{align*}
Thus, the property is satisfied for $j=1$.

\medskip

\textit{Step 2} For $j+1$ and $k\geq j+2$, by the strong Markov property applied at time $\tau_1^n$ to the process $(\mathbf Z^n,\mathbf X^n)$ we have
\begin{align*}
\mathbf E_x\big[f_1(\mathbf Z_{t_1}^{n,z})\,f_2(\mathbf Z_{t_2}^{n,z})\,\mathbf 1_{\{\tau_j^n<t_1\leq\tau_{j+1}^n,\tau_{k-1}^n<t_2\leq\tau_k^n\}}\big]&=\mathbf E_x\big[\mathbf 1_{\{\tau_1^n<t_1\}}\,\phi_{t_1-\tau_1^n,t_2-\tau_1^n}^{n,j,k}(\mathbf Z_{\tau_1^n}^{n,z},\mathbf X_{\tau_1^n}^n)\big]
\end{align*}
where, for all $s,t>0$, $x\in E$, $z\in\mathbb R^d$, $n,j\geq1$ and $k\geq j+2$ 
\begin{align*}
\phi_{s,t}^{n,j,k-1}(z,x):=\mathbf E_x\big[f_1(\mathbf Z_s^{n,z})\,f_2(\mathbf Z_t^{n,z})\,\mathbf 1_{\{\tau_{j-1}^n<s\leq\tau_j^n,\tau_{k-2}^n<t\leq\tau_{k-1}^n\}}\big].
\end{align*}
By the induction property we have 
\begin{align*}
\phi_{s,t}^{n,j,k-1}(z,x)\underset{n\to\infty}{\longrightarrow}\phi_{s,t}^{j,k-1}(z,x):=\mathbf E_x\big[f_1(\bar{\mathbf Z}_s^z)\,f_2(\bar{\mathbf Z}_t^z)\,\mathbf 1_{\{\tau_{j-1}<s\leq\tau_j^n,\tau_{k-2}<t\leq\tau_{k-1}\}}\big].
\end{align*}
Using the two previous Lemmas, by the same computations as in the step 2, 3, 4 and  of the proof of Theorem \ref{th:conv} we have
\begin{align*}
\mathbf E_x\big[\mathbf 1_{\{\tau_1^n<t_1\}}\,\phi_{t_1-\tau_1^n,t_2-\tau_1^n}^{n,j,k}(\mathbf Z_{\tau_1^n}^{n,z},\mathbf X_{\tau_1^n}^n)\big]\underset{n\to\infty}{\longrightarrow}\mathbf E_x\big[\mathbf 1_{\{\tau_1<t_1\}}\,\phi_{t_1-\tau_1,t_2-\tau_1}^{j,k}(\mathbf Z_{\tau_1}^{n,z},\mathbf Y_1)\big]
\end{align*}
which is equal to
\begin{align*}
\mathbf E_x\big[f_1(\bar{\mathbf Z}_{t_1}^z)\,f_2(\bar{\mathbf Z}_{t_2}^z)\,\mathbf 1_{\{\tau_j<t_1\leq\tau_{j+1},\tau_{k-1}<t_2\leq\tau_k\}}\big]
\end{align*}
and conclues the proof of Proposition \ref{prop:convFD2}.
\end{proof}

\subsection{Proof of Theorem 5: convergence of the finite-dimensional distributions of \texorpdfstring{$\mathbf Z^n$}{Zn}}

\label{subsec:proofConvFD}

\begin{proof}[Proof of Theorem \ref{th:convFD}]
We prove this for the case where $N=2$, but the result holds in the same way for any $n\geq2$. Let $0<t_1<t_2$, we have
\begin{align*}
\mathbf E_x\big[f_1(\mathbf Z_{t_1}^{n,z})\,&f_2(\mathbf Z_{t_2}^{n,z})\,\mathbf 1_{\{t_2\leq\tau_k^n\}}\big]\\
&=\sum_{k_1=0}^{k-1}\sum_{k_2=k_1}^{k-1}\mathbf E_x\big[f_1(\mathbf Z_{t_1}^{n,z})\,\mathbf 1_{\{\tau_{k_1}^n<t_1\leq\tau_{k_1+1}^n\}}\,f_2(\mathbf Z_{t_2}^{n,z})\,\mathbf 1_{\{\tau_{k_2}^n<t_2\leq\tau_{k_2+1}^n\}}\big]\\
&=\sum_{k_1=0}^{k-1}\mathbf E_x\big[f_1(\mathbf Z_{t_1}^{n,z})\,f_2(\mathbf Z_{t_2}^{n,z})\,\mathbf 1_{\{\tau_{k_1}^n<t_1<t_2\leq\tau_{k_1+1}^n\}}\big]\\
&+\sum_{k_1=0}^{k-1}\sum_{k_2=k_1+1}^{k-1}\mathbf E_x\big[f_1(\mathbf Z_{t_1}^{n,z})\,f_2(\mathbf Z_{t_2}^{n,z})\,\mathbf 1_{\{\tau_{k_1}^n<t_1\leq\tau_{k_1+1}^n,\tau_{k_2}^n<t_2\leq\tau_{k_2+1}^n\}}\big].
\end{align*}
Then, by Propositions \ref{prop:convFD1} and \ref{prop:convFD2} we have
\begin{align*}
\lim_{n\to\infty}\mathbf E_x\big[f_1(\mathbf Z_{t_1}^{n,z})\,&f_2(\mathbf Z_{t_2}^{n,z})\,\mathbf 1_{\{t_2\leq\tau_k^n\}}\big]\\
&=\sum_{k_1=0}^{k-1}\mathbf E_x\big[f_1(\bar{\mathbf Z}_{t_1}^z)\,f_2(\bar{\mathbf Z}_{t_2}^z)\,\mathbf 1_{\{\tau_{k_1}<t_1<t_2\leq\tau_{k_1+1}\}}\big]\\
&+\sum_{k_1=0}^{k-1}\sum_{k_2=k_1+1}^{k-1}\mathbf E_x\big[f_1(\bar{\mathbf Z}_{t_1}^z)\,f_2(\bar{\mathbf Z}_{t_2}^z)\,\mathbf 1_{\{\tau_{k_1}<t_1\leq\tau_{k_1+1},\tau_{k_2}<t_2\leq\tau_{k_2+1}\}}\big], 
\end{align*}
which is equal to
\begin{align*}
\mathbf E_x\big[f_1(\bar{\mathbf Z}_{t_1}^z)\,f_2(\bar{\mathbf Z}_{t_2}^z)\,\mathbf 1_{\{t_2\leq\tau_k\}}\big]
\end{align*}
and conclues the proof of the Theorem.
\end{proof}

\section{Examples}

In this section we consider several examples. The first example we look at is inspired by \cite{faggionato2008averaging} and \cite{Pakdaman2012} which fits our framework and satisfies our hypothesis given by \eqref{hyp:Lip} and \eqref{hyp:Fbound}.

\subsection{Multiscale piecewise-deterministic Markov processes}

In contrast to the example studied in \cite{faggionato2008averaging} and \cite{Pakdaman2012}, we consider the case where the state space $E$ is infinite and the holding times are no longer exponentially distributed, but instead have a general distribution with cumulative distribution function $G$. Furthermore, our model construction makes it unnecessary to describe the behavior of the process restricted to each $E_i$, $i \geq 1$, and thus to specify either the jump rates or the distribution of post-jump positions within each $E_i$. In particular, there is no need to assume that the process is Markovian. As a result, the limiting averaged stochastic differential equation is no longer driven by a pure-jump Markov process, but rather by a pure-jump semi-Markov process, which extend the generality of the model.

\medskip

\textbf{Two-time scale PDMP $(\mathbf Z^n,\mathbf X^n)$.} \textit{Underlying ergodic process $X$.} We consider a stochastic jump process $X$ taking values in a countable set $E$, which can be decomposed as a disjoint union
\[
E = \bigsqcup_{i \geq 1} E_i,
\]
where $(E_i)_{i \geq 1}$ is a partition of $E$ into finite subsets such that for all $i\geq1$ and all $x_i\in E_i$,
\begin{align*}
\mathbf P_{x_i}\big(X_t\in E_i,\ \forall t\geq0\big)=1.
\end{align*} For each $i \geq 1$, we assume that the process $X$ restricted to $E_i$ admits a unique invariant probability measure $(\mu_i(x))_{x \in E_i}$.

\medskip

\textbf{Fast process $\mathbf X^n$.} We construct a process $\mathbf X$ by a piecing-out procedure based on $X$, whose first jump time corresponds to the exit time from a block $E_i$, with a distribution specified below.

\medskip

Let $G:\mathbb{R}_+ \to [0,1]$ be an absolutely continuous cumulative distribution function, and let $\lambda:E \to \mathbb{R}_+$ be a measurable function. For $x \in E$, the holding time $\sigma_1$ at $x$ satisfies
\[
\mathbb{P}_x(\sigma_1 \leq t)
=
G\Big(\int_0^t \lambda(X_s)\,\mathrm ds\Big).
\]
To define the transition at jump-time we considere a probability kernel $\pi:E\times E\to[0,1]$ and for all $i\geq1$, starting from a state $x\in E_i$, we have
\begin{align*}
\mathbf P_x(\mathbf X_{\tau_1}=z\,\lvert\,\mathbf X_{\tau_1-}=y)&=\pi(y,z),\quad\forall z\in E_i\quad\textup{and}\quad y\not\in E_i 
\end{align*} 
with 
\begin{align*}
\sum_{w\not\in E_i}\pi(y,w)=1.
\end{align*}
For all $n\geq1$ we consider the accelerated process $\mathbf X^n$ where we take the accelerated process $X^n$ defined for all $n\geq0$ by
\begin{align*}
X_t^n=X_{nt},\quad\forall t\geq0,
\end{align*}
instead of $X$.

\medskip

\textbf{Index process $\mathbf I^n$.} We remind the definition of the index process $\mathbf I^n$:
\[\mathbf I_t^n=i\iff \mathbf X_t^n\in E_i,\quad\forall t\geq0,
\]
and we denote by $(\tau_k^n)$ its sequence of jump-times.

\medskip

\textbf{Multiscale stochastic differential equation and PDMP $(\mathbf Z^n,\mathbf X^n)$.} We now define a two-time-scale PDMP $(\mathbf Z^n,\mathbf X^n)$ on $\mathbb R^d \times E$. The fast component is $\mathbf X^n$, while the slow component $\mathbf Z^n$ evolves continuously according to the stochastic differential equation
\[
\frac{\mathrm d \mathbf Z_t^n}{\mathrm dt}
=
f(\mathbf Z_t^n,\mathbf X_t^n),
\qquad
\mathbf Z_0^n = z \in \mathbb R^d,
\]
where $f:\mathbb R^d \times E \to \mathbb R^d$ is a measurable function. \textit{Desciption of the two-time scal process $(\mathbf Z^n,\mathbf X^n)$.} The sample paths of the PDMP $(\mathbf Z^n,\mathbf X^n)$ are constructed iteratively as follows. One starts at time $\tau_0^n=0$ from an initial condition $(z_0,x_0)\in\mathbb R^d\times E_{i_0}$, with $i_0\geq1$. Between two successive jump times $\tau_0^n$ and $\tau_1^n$, i.e. before the first transition of the discrete component $\mathbf X^n$, the process $\mathbf Z^n$ evolves deterministically according to the ordinary differential equation
\[
\frac{\mathrm d \mathbf Z_t^n}{\mathrm dt}=f(\mathbf Z_t^n,x_0), 
\qquad \mathbf Z_{\tau_0^n}^n=z_0,
\]
while the fast component evolves as $\mathbf X_t^n=X_{nt}$. At time $\tau_1^n$, the process $\mathbf X^n$ jumps from $x_0$ to a new state $x_1\in E$, with $x_1\notin E_{i_0}$, according to the transition mechanism described above. The slow component is then restarted from the value reached at the jump time, i.e.
\[
z_1 := \mathbf Z_{\tau_1^n}^n,
\]
and the evolution continues on $[\tau_1^n,\tau_2^n)$ by solving
\[
\frac{\mathrm d \mathbf Z_t^n}{\mathrm dt}=f(\mathbf Z_t^n,x_1),
\qquad \mathbf Z_{\tau_1^n}^n=z_1.
\]

This construction is then iterated over all jump times $(\tau_k^n)_{k\geq0}$ to obtain the full càdlàg trajectory of the PDMP $(\mathbf Z^n,\mathbf X^n)$. We assume that there exists a family of vector fields $(f_i)_{i \geq 1}$ such that for $x \in E_i$,
\[
f(\cdot,x)=f_i(\,\cdot\,,x).
\]
\textbf{Averaged limiting process $\bar{\mathbf Z}$.} \textit{Averaged index process.} Let $\mathbf I$ be a semi Markov-jump process on $\mathbb N^*$. Its first jump time satisfies
\begin{align*}
\mathbf P_i(\tau_1\leq t)=G(t\,\bar\lambda_i)
\end{align*}
where
\begin{align*}
\bar\lambda_i := \sum_{y\in E_i} \mu_i(y)\, \lambda(y).
\end{align*}
and for $i,j\in\mathbb N^*$ its transition are given by
\begin{align*}
\mathbf P_i(\mathbf I_{\tau_1}=j)&=\bar\pi_{ij}\quad j\not=i,
\end{align*}
where
\begin{align*}
\bar\pi_{ij}=\sum_{x\in E_i}\sum_{y\in E_j}\,\mu_i(x)\,\pi(x,y).
\end{align*}

\medskip 

\textit{Limiting averaged stochastic differential equation $\bar{\mathbf Z}$.} Now, we define the averaged vector field, for all $z\in\mathbb R^d$
\begin{align*}
\bar f_i(z):=\sum_{x\in E_i}f_i(z,x)\,\mu_i(x),
\end{align*}
and the limiting averaged dynamics
\begin{align*}
\frac{\mathrm d\bar{\mathbf Z}_t^z}{\mathrm dt}=\bar f_{\mathbf I_t}(\bar{\mathbf Z}_t^z),\quad \bar{\mathbf Z}_0^z=z\in\mathbb R^d.
\end{align*}

Then, under the regularity conditions: for all $y,z\in\mathbb R^d$ and $x\in E$ 
\begin{align*}
\lvert f_i(y,x)-f_i(z,x)\rvert\leq L_i\lvert y-z\rvert\quad\forall i\geq1.
\end{align*} 
we have the following result.
\begin{theorem}
Let $z \in \mathbb R^d$ and $T>0$. Then the sequence of processes $(\mathbf{Z}^{n,z})_{n\geq 1}$ converges in law for the Skorokhod topology on $D([0,T],\mathbb R^d)$ to $\bar{\mathbf Z}^z$ defined above, where $D([0,T],\mathbb R^d)$ is equipped with the Skorokhod topology induced by the supremum norm.
\end{theorem}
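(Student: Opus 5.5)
The plan is to deduce this statement from Theorem~\ref{thm:convSkorokhod}, checking that the example is an instance of the general framework and that each standing assumption holds. Take $F_i:=f_i$ and $b:=\lambda$, with the kernel $\pi$ and distribution function $G$ as given. Since $E_i$ is finite and $f_i(\cdot,x)$ is $L_i$-Lipschitz for every $x\in E_i$, assumption~\eqref{hyp:Lip} holds with constant $L_i$. For~\eqref{hyp:Fbound}, note that $M_i=\max_{x\in E_i}|f_i(0,x)|$ is a maximum over a finite set, hence finite. With these identifications, $\bar F_i=\bar f_i$. Moreover, $\mu_i(b)=\bar\lambda_i$, so the law $G(t\bar\lambda_i)$ of the first jump time is the one in the definition of $\mathbf I$. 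The transition probabilities $P(i,\cdot)$ from~\eqref{eq:jumpMarkov} should also be compared with $\bar\pi_{ij}$. The general formula weights by $b(x)/\mu_i(b)$, so the identification holds when $\lambda$ is constant on $E_i$. Otherwise the limit is the process $\mathbf I$ of the general theory, and I would state it that way.

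Next I would verify the ergodic assumptions. On each finite block $E_i$, the restricted jump process is irreducible with unique invariant law $\mu_i$; irreducibility is implicit in the uniqueness assumption and I would impose it. The ergodic theorem for finite-state Markov chains then gives Assumption~\ref{assumption:ergo} for every function on $E_i$; all such functions are bounded and lie in $\mathcal A_i$. For Assumption~\ref{hyp:b-UI}, on $E_i$ we have $\frac1n\int_0^{nt}\lambda(X_s)\,\mathrm ds\le t\max_{E_i}\lambda$, which is bounded and therefore uniformly integrable.

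The main step is~\eqref{hyp:ErgoF}, the uniform-in-$t$ $L^1$ ergodicity. Write $\frac1T\int_t^{t+T}F_i(z,X_{ns})\,\mathrm ds=\frac{1}{nT}\int_{nt}^{n(t+T)}h(X_u)\,\mathrm du$ with $h=F_i(z,\cdot)-\bar F_i(z)$, so that $\mu_i(h)=0$. On a finite irreducible state space, the Poisson equation $\mathcal L_i g=h$ has a bounded solution $g$. Dynkin's formula then gives
\[
\int_a^{b}h(X_u)\,\mathrm du=g(X_b)-g(X_a)-(M_b-M_a),
\]
where $M$ is a martingale with $\mathbf E[(M_b-M_a)^2]\le C(b-a)$. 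This yields the bound $\mathbf E_{x_i}|\cdot|\le \frac{2\|g\|_\infty}{nT}+\frac{C^{1/2}}{(nT)^{1/2}}$, which is uniform in $t$ and tends to $0$. The same estimate gives the result of~\cite{Hasminskii1966a} used in~\eqref{theo:convSup}.

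Finally, to apply Theorem~\ref{thm:convSkorokhod}, I need non-explosion $\mathbf P_x(\tau_\infty>T)=1$. This is the step I expect to be the real obstacle, since $I=\mathbb N^*$ is infinite and a semi-Markov chain can explode. I would try to show that $\sum_k(\tau_{k+1}-\tau_k)=\infty$ almost surely. The holding times are i.i.d.\ draws from $G$ rescaled by $\bar\lambda_{\mathbf I_{\tau_k}}$. If $\sup_i\bar\lambda_i<\infty$, each holding time stochastically dominates an i.i.d.\ variable with law $G(\cdot\,\sup\bar\lambda)$, whose sum diverges, so $\tau_\infty=\infty$. Without such a bound I would add it, or non-explosion of $\mathbf I$, as an explicit hypothesis. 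Tightness of the one-dimensional marginals of $\bar{\mathbf Z}^z$ is automatic in $\mathbb R^d$. Theorem~\ref{thm:convSkorokhod} then gives the convergence in $D([0,T],\mathbb R^d)$.
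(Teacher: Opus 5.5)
Your route is the paper's: it also obtains the result by invoking Theorem~\ref{thm:convSkorokhod} after checking \eqref{hyp:Lip}, \eqref{hyp:Fbound} and ergodicity on the finite blocks. The difference is in what actually gets verified. The paper's proof stops at the ergodic limit for $\lambda$ and then asserts that, since $\mathbf X^n$ ``mixes rapidly'', the slow component sees $\bar f_i$. It never verifies \eqref{hyp:ErgoF}, Assumption~\ref{hyp:b-UI}, or the non-explosion hypothesis $\mathbf P_x(\tau_\infty>T)=1$ that Theorem~\ref{thm:convSkorokhod} requires. Your Poisson-equation/Dynkin bound of order $(nT)^{-1}+(nT)^{-1/2}$, uniform in $t$, supplies \eqref{hyp:ErgoF} rigorously. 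The bound $t\max_{E_i}\lambda$ settles uniform integrability.

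Both of your caveats are genuine defects of the printed statement, not weaknesses of your argument. By \eqref{eq:jumpMarkov}, the pre-jump position is asymptotically distributed as $\lambda(x)\mu_i(x)/\bar\lambda_i$. The paper's $\bar\pi_{ij}$ omits this weight, so it is the correct limit kernel only in special cases, such as $\lambda$ constant on $E_i$ or $x\mapsto\pi(x,E_j)$ constant on $E_i$. For instance, if $\lambda$ vanishes on the states of $E_i$ that feed $E_j$, the true limit never jumps from $i$ to $j$, although $\bar\pi_{ij}$ may be positive. Second, with $I=\mathbb N^*$ the limiting chain can explode before $T$ with positive probability (exponential $G$, $\lambda\equiv i^2$ on $E_i$, $\pi$ pushing block $i$ into block $i+1$). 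So a hypothesis like your $\sup_i\bar\lambda_i<\infty$ is needed. Under it, your domination by i.i.d.\ copies of $G^{-1}(U)/\sup_i\bar\lambda_i$, which are a.s.\ positive since $G(0)=0$, gives $\tau_\infty=\infty$.

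One small correction: uniqueness of $\mu_i$ does not imply irreducibility. On a finite block, however, it does give a single closed class. That already suffices for the ergodic theorem from every starting point and for solvability of $\mathcal L_i g=h$ whenever $\mu_i(h)=0$, so the extra irreducibility assumption is unnecessary.
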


\medskip

\begin{proof}
The result is a direct application of Theorem~\ref{thm:convSkorokhod}. We verify that its assumptions are satisfied for the PDMP $(\mathbf Z^n,\mathbf X^n)$ constructed above.

\medskip

\textit{Lipschitz condition.} For all $y,z\in\mathbb R^d$ and $x\in E$ 
\begin{align*}
\lvert f_i(y,x)-f_i(z,x)\rvert\leq L_i\lvert y-z\rvert\quad\forall i\geq1,
\end{align*} 
assumption~\eqref{hyp:Lip} is verified.

\medskip
 
\textit{Growth control.} Since $E_i$ is finite for all $i\geq1$ 
\begin{align*}
\sup_{x\in E_i}\lvert f_i(0,x)\rvert<\infty,
\end{align*}
so hypothesis \eqref{hyp:Fbound} is satisfied.

\medskip

\textit{Ergodic averaging on blocks.}
Since each $E_i$ is finite and invariant, the restriction of $X$ to $E_i$ is an irreducible finite-state Markov jump process with unique invariant measure $\mu_i$. Hence, for all $x\in E_i$,
\[
\frac{1}{t}\int_0^t \lambda(X_s)\,ds \xrightarrow[t\to\infty]{} \bar\lambda_i,
\]
with
\[
\bar\lambda_i=\sum_{y\in E_i}\mu_i(y)\lambda(y).
\]
After time rescaling $X_t^n=X_{nt}$, this yields
\[
\int_0^t \lambda(X_{ns})\,ds \xrightarrow[n\to\infty]{} t\,\bar\lambda_i,
\]
which identifies the limiting holding-time distribution $G(t\bar\lambda_i)$.

\medskip

\textit{Effective jump mechanism.}
When $\mathbf X^n$ leaves a block $E_i$, transitions are governed by the kernel $\pi$. Averaging over the invariant distribution inside each block yields the effective probabilities
\[
\bar\pi_{ij}=\sum_{x\in E_i}\sum_{y\in E_j}\mu_i(x)\pi(x,y).
\]

\medskip

\textit{Averaging of the slow dynamics.}
Between jump times, $\mathbf Z^n$ evolves according to
\[
\frac{d\mathbf Z_t^n}{dt}=f(\mathbf Z_t^n,\mathbf X_t^n), \qquad f(\cdot,x)=f_i(\cdot,x).
\]
Since $\mathbf X^n$ mixes rapidly within each $E_i$, the slow component sees the averaged vector field
\[
\bar f_i(z)=\sum_{x\in E_i} f_i(z,x)\mu_i(x).
\]
The assumptions of Theorem~\ref{th:conv} are satisfied, which yields the stated convergence.
\end{proof}

\subsection{Branching Population Models Coupled with a Fast Ergodic Environment}

In those examples, we refer to Section 4.1 in \cite{kagan2025averaging} for the construction of the branching process $\mathbf X^n$ and the corresponding assumptions. We provide only a brief description here, in order to focus primarily on the process $\mathbf Z^n$.\\

We illustrate our framework through a branching-type model, which provides a natural and insightful example of the general setting. We consider, for each $n \geq 1$, a process $(\mathbf Z_t^n)_{t \geq 0}$ taking values in a state space $Z$, defined as the solution of the ordinary differential equation
\begin{align*}
\frac{\mathrm d\mathbf Z_t^n}{\mathrm dt} = F_{\mathbf I_t^n}(\mathbf Z_t^n,\mathbf X_t^n), \quad \mathbf Z_0^n = z,
\end{align*}
where the dynamics are driven by an underlying population process.
More precisely, $(\mathbf X_t^n)_{t \geq 0}$ is a branching process describing the evolution of a population of individuals, each individual being characterized by a type taking values in a set
\begin{align*}
E:=\bigsqcup_{i\geq1}E_i
\end{align*}
where $E_i=D^i$ for all $i\geq1$ denotes the space of possible types. At any time $t \geq 0$, the population is composed of $\mathbf I_t^n$ individuals, and the configuration of their types is encoded by the vector
\begin{align*}
\mathbf X_t^n=(x_1,\ldots,x_{\mathbf I_t^n})\in E_{\mathbf I_t^n},
\end{align*}
where, for each $i \in \{1,\ldots,\mathbf I_t^n\}$, the variables $x_i$ are independent and identically distributed copies of a Markov process $Y$ taking values in $D$. In particular, the process $(\mathbf I_t^n)_{t \geq 0}$ represents the size of the population at time $t$, while $(\mathbf X_t^n)_{t \geq 0}$ captures its internal composition.
The individual dynamic $(Y_t)_{t \geq 0}$ is a Markov process on a measurable space $D$. We assume that $Y$ admits a unique ergodic stationary distribution $\chi$ such that for all $y \in D$, 
\[
\lim_{t \to \infty} \| \mathbf{P}_y( Y_t \in \cdot) - \chi \|_{TV} = 0.
\]
The individual branching rate $\beta : D \to \mathbb{R}_+$: a measurable function such that   for all $y \in D$, the family $(\frac{1}{n}\int_0^n \beta(Y_s) \, \mathrm ds)_{n \geq 1}$ is uniformly integrable. The individual reproduction is given by a transition kernel $K$ from $D$ to $\{ \Delta\}\cup \cup_{i\geq 1} D^i$, where $\Delta$ is a cemetery point corresponding to an empty progeny. We have shown in \cite{kagan2025averaging}, Proposition 4.1. that the sequence of processes $(\mathbf{I}^n)_{n\geq 1}$ converges in law for the Skorokhod topology on $D([0,+\infty),\mathbb N)$ to 
a standard continuous-time Galton-Watson tree with reproduction/branching rate $\chi(\beta)$ and offspring distribution \[L = \bigg(\frac{\chi(\beta K( \cdot, D^j))}{\chi(\beta)}\bigg)_{j \in \mathbb N}.\] which we shall denote by $\mathbf I$ in the remainder.

\subsubsection{Additive population–environment interaction}

In the additive population–environment interaction example, th process $(\mathbf Z_t^n)_{t \geq 0}$ models the evolution of an external quantity (for instance, a resource, an environmental variable, or a macroscopic observable) which interacts with the population. Its dynamics depend both on its current state $\mathbf Z_t^n$ and on the instantaneous configuration of the population $(\mathbf X_t^n, \mathbf I_t^n)$.
To make this interaction explicit, we introduce a function $\psi_1 : E \to \mathbb{R}$, such that $\|\psi_1\|_\infty<\infty$ which represents the contribution (e.g., consumption or production) of a single individual of type $x \in E$. In addition, we consider a function $g : Z \to \mathbb{R}$ describing the intrinsic dynamics of the variable $\mathbf Z^n$. We assume that $g$ is Lipschitz continuous, i.e., there exists a constant $L_g > 0$ such that
\begin{align*}
\lvert g(y) - g(z) \rvert \leq L_g \lvert y - z \rvert, \quad \forall y, z \in\mathbb R^d,
\end{align*}
which ensures well-posedness of the differential equation.
In this branching setting, the interaction term is defined, for any configuration $(x_1, \dots, x_i) \in E^i$, by
\begin{align*}
F_i^1\big(z,(x_1,\ldots,x_i)\big) = \sum_{k=1}^i \psi_1(x_k) + g(z).
\end{align*}
This expression reflects the fact that the evolution of $\mathbf Z_t^n$ results from the aggregation of the individual contributions of all members of the population, through the sum $\sum_{k=1}^i \psi_1(x_k)$, combined with the intrinsic dynamics $g(z)$.
Overall, this model captures the coupled evolution of a structured population and an external variable, where the population evolves according to a branching mechanism and, in turn, influences the dynamics of the macroscopic quantity $\mathbf Z^n$ through a cumulative interaction term.

Then we have

\begin{theorem}
Let $z \in \mathbb R^d$. There exists a continuous  averaged process $\bar{\mathbf Z}^z$ such that the sequence of processes $(\mathbf{Z}^{n,z})_{n\geq 1}$ converges in law for the Skorokhod topology on $D([0,+\infty),\mathbb R^d)$ where $D([0,+\infty),\mathbb R^d)$ is equipped with the Skorokhod topology induced by the supremum norm where $\bar{\mathbf Z}^z$ is solution of the stochastic differential equation 
\[
\frac{\mathrm d\bar{\mathbf Z}_t^z}{\mathrm dt}=\bar F_{\mathbf I_t}^1(\bar{\mathbf Z}_t^z),\quad\mathbf Z_0^z=z
\]
with 
\begin{align*}
\bar F_i^1(z)=\int_{E_i}F_i^1(z,x)\,\mu_i(\mathrm dx).
\end{align*}
\end{theorem}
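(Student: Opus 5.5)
The plan is to deduce the statement from Theorem~\ref{thm:convSkorokhod} on every $[0,T]$, and then pass to $D([0,+\infty),\mathbb R^d)$ using the remark that follows that theorem. This requires three things. First, the structural assumptions \eqref{hyp:Lip}, \eqref{hyp:Fbound} and \eqref{hyp:ErgoF} must hold for $F_i^1$. Second, we need \eqref{hyp:unifintegr} for the rate $b$ of the branching construction. Third, the limit $\mathbf I$ must be non-explosive, with tight one-dimensional marginals for $\bar{\mathbf Z}^z$.

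\textbf{Step 1: the Lipschitz and growth bounds.} For fixed $i$ we have $F_i^1(y,x)-F_i^1(z,x)=g(y)-g(z)$, so $L_i=L_g<\infty$ uniformly in $i$. This gives \eqref{hyp:Lip}. Next,
\[
M_i=\sup_{x\in E_i}\lvert F_i^1(0,x)\rvert\leq i\,\|\psi_1\|_\infty+\lvert g(0)\rvert<\infty,
\]
which gives \eqref{hyp:Fbound}. The assumption \eqref{hyp:unifintegr} on the branching rate, namely $b(x_1,\dots,x_i)=\sum_k\beta(x_k)$, is exactly the one verified in Section 4.1 of \cite{kagan2025averaging}. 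That verification uses the uniform integrability of $\frac1n\int_0^n\beta(Y_s)\,\mathrm ds$ and the fact that each block is a finite sum of independent copies of $Y$. The ergodic Assumption~\ref{assumption:ergo} on $E_i=D^i$ also comes from there, with $\mu_i=\chi^{\otimes i}$.

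\textbf{Step 2: the ergodic condition \eqref{hyp:ErgoF}.} Under $\mathbf P_{x_i}$, the coordinates $X_{ns}=(Y^1_{ns},\dots,Y^i_{ns})$ are independent copies of $Y$. Therefore
\[
\frac1T\int_t^{t+T}F_i^1(z,X_{ns})\,\mathrm ds-\bar F_i^1(z)=\sum_{k=1}^i\Big(\frac1T\int_t^{t+T}\psi_1(Y^k_{ns})\,\mathrm ds-\chi(\psi_1)\Big),
\]
because $\bar F^1_i(z)=i\,\chi(\psi_1)+g(z)$. It is thus enough to show, for one copy of $Y$, that
\[
\sup_{t>0}\mathbf E_y\Big\lvert\frac1T\int_t^{t+T}\psi_1(Y_{ns})\,\mathrm ds-\chi(\psi_1)\Big\rvert\to0.
\]
I expect this to be the main obstacle. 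Total-variation convergence of the marginals only controls the mean, and the supremum over $t$ is only harmless after using the Markov property. I will first bound the $L^1$ norm by the $L^2$ norm and expand the square:
\[
\frac{1}{T^2}\int\!\!\int \mathbf E_y\big[\tilde\psi(Y_{nu})\tilde\psi(Y_{nv})\big]\,\mathrm du\,\mathrm dv,\qquad \tilde\psi=\psi_1-\chi(\psi_1).
\]
By the Markov property at time $nu$, for $u<v$ each integrand equals $\mathbf E_y\big[\tilde\psi(Y_{nu})\,P_{n(v-u)}\tilde\psi(Y_{nu})\big]$. This is bounded by $2\|\psi_1\|_\infty^2\,\sup_w\|\mathbf P_w(Y_{n(v-u)}\in\cdot)-\chi\|_{TV}$. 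Here I need total-variation convergence that is uniform in the starting point, or at least uniform on the law of $Y_{nu}$. If only pointwise convergence is available, I would instead use dominated convergence together with the convergence $\mathbf P_y(Y_{nu}\in\cdot)\to\chi$, treating off-diagonal pairs with $v-u\geq\eta$ and letting $\eta\to0$. Since $\psi_1$ is bounded, the diagonal region contributes $O(\eta)$. An alternative is to derive the claim directly from the ergodic theorem \eqref{hyp:ergo} by dominated convergence, which is uniform in $t$ only after the shift by the Markov property, so the TV route is preferred.

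\textbf{Step 3: non-explosion and tightness.} The limit $\mathbf I$ is a continuous-time Galton--Watson process with finite rate $\chi(\beta)$ and offspring law $L$. It is non-explosive provided $L$ has a finite mean, which I will assume as in \cite{kagan2025averaging}. Hence $\mathbf P_i(\tau_\infty=\infty)=1$. By Gronwall,
\[
\lvert\bar{\mathbf Z}^z_t\rvert\leq\big(\lvert z\rvert+t\,\|\psi_1\|_\infty\sup_{s\leq t}\mathbf I_s+t\lvert g(0)\rvert\big)e^{L_gt}.
\]
The variable $\sup_{s\leq t}\mathbf I_s$ is almost surely finite, so the law of $\bar{\mathbf Z}^z_t$ is tight. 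Continuity of $\bar{\mathbf Z}^z$ is clear from the ODE. Theorem~\ref{thm:convSkorokhod} then gives convergence on each $D([0,T],\mathbb R^d)$, and the remark after it upgrades this to $D([0,+\infty),\mathbb R^d)$.
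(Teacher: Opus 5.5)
Your proposal is correct. It follows the same overall strategy as the paper (reduce to the general results by checking their hypotheses for $F_i^1$), but it is substantially more complete than the paper's own argument. The paper checks only \eqref{hyp:Lip} with $L_i=L_g$ and a growth bound, in which it drops the $\lvert g(0)\rvert$ term you correctly keep. It then recalls the convergence of $\mathbf I^n$ from \cite{kagan2025averaging} and asserts that all hypotheses of Theorem~\ref{th:conv} hold. This leaves \eqref{hyp:ErgoF} unverified: it is a within-block mixing property of $Y$, and it does not follow from the convergence of $\mathbf I^n$ or from the Lipschitz property of $g$. Moreover, Theorem~\ref{th:conv} alone gives only truncated one-dimensional marginals, not convergence on $D([0,+\infty),\mathbb R^d)$. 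Your route through Theorem~\ref{thm:convSkorokhod} and the remark after it, together with your checks of non-explosion and a.s. finiteness of $\bar{\mathbf Z}^z_t$, is the logically appropriate one. Your Step~2 supplies the missing verification of \eqref{hyp:ErgoF} by the right mechanism: the product structure $\mu_i=\chi^{\otimes i}$, the bound $L^1\le L^2$, and the Markov property applied to the covariance.

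Two points to tighten in Step~2. Only pointwise total-variation convergence is assumed, so make your fallback the main argument. Set $h_s(w):=\|\mathbf P_w(Y_s\in\cdot)-\chi\|_{TV}$; it is non-increasing in $s$ by invariance of $\chi$. Hence all pairs with $v-u\ge\eta$ are controlled by $\mathbf E_y[h_{n\eta}(Y_{nu})]\le\chi(h_{n\eta})+\|\mathbf P_y(Y_{nu}\in\cdot)-\chi\|_{TV}$, and the first term tends to $0$ by dominated convergence. Because the supremum in \eqref{hyp:ErgoF} runs over all $t>0$, the second term is uniformly small only for $u\ge\epsilon$. You must therefore also discard the strip $u<\epsilon$, at a cost of $O(\epsilon/T)$.

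In Step~3, the finite-mean hypothesis on $L$ is an addition to the statement, and you can avoid it. Proposition~4.1 of \cite{kagan2025averaging}, as quoted in the paper, already gives convergence of $\mathbf I^n$ in $D([0,+\infty),\mathbb N)$, which presupposes a non-explosive limit. Once your Gronwall bound shows that $\bar{\mathbf Z}^z_t$ is a.s. finite, tightness of its law on $\mathbb R^d$ is automatic.
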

\begin{proof}
The result follows from Theorem~\ref{th:conv}. We verify its assumptions.
\medskip

\textit{Lipschitz condition.} For all $i\in I$, $x\in E_i$
\begin{align*}
\lvert F_i^1(y,x)-F_i^1(z,x)\rvert&=\lvert g(y)-g(z)\rvert\\
&\leq L_g\,\lvert y-z\rvert,\quad\forall y,z\in\mathbb R^d
\end{align*} 
so the assumption \ref{hyp:Lip} is satisfied with  $L_i=L_g$ for all $i\in I$. 

\medskip

\textit{Growth control.} Since $\psi_1$ is bounded, we have 
\begin{align*}
\sup_{x\in E_i}\lvert F_i(0,x)\rvert\leq i\,\|\psi_1\|_\infty\quad\forall i\geq1.
\end{align*} 
According to Proposition 4.1. in \cite{kagan2025averaging}, the sequence of processes $(\mathbf{I}^n)_{n\geq 1}$ converges in law for the Skorokhod topology on $D([0,+\infty),\mathbb N)$ to 
a standard continuous-time Galton-Watson tree with reproduction/branching rate $\chi(\beta)$ and offspring distribution \[L = \bigg(\frac{\chi(\beta K( \cdot, D^j))}{\chi(\beta)}\bigg)_{j \in \mathbb N}.\]
The convergence of the fast population process together with the Lipschitz property of $g$ ensures that all assumptions of Theorem~\ref{th:conv} are satisfied. The convergence of $\mathbf Z^n$ to $\bar{\mathbf Z}$ follows.
\end{proof}

\subsubsection{Multiplicative population–environment interaction} 

We also consider a scenario in which $\psi_2 : E \to \mathbb{R}_+$ represents the absorption capacity per unit area of each individual type such that $\|\psi_2\|_\infty<\infty$. Let $g : Z \to \mathbb{R}$ be Lipschitz continuous, i.e., there exists $L_g > 0$ such that
\begin{align*}
\lvert g(y)-g(z)\rvert\leq L_g\lvert y-z\rvert.
\end{align*} 
In this case, the interaction term is defined as
\begin{align*}
F_i^2\big(z,(x_1,\ldots,x_i)\big)
&=g(z)\sum_{k=1}^i\psi_2(x_k).
\end{align*}
where, for all $i\in\{1,\dots,\mathbf I_t^n\}$, the $x_i$ are i.i.d. copies of a stochastic process $X$.\\
Here, the macroscopic variable $\mathbf Z_t^n$ evolves according to a multiplicative effect: the intrinsic dynamics $g(z)$ are scaled by the total absorption capacity of the population. Each individual contributes proportionally to its own capacity, and the cumulative effect depends on the current state $z$. We check that assumptions \ref{hyp:Lip} and \ref{hyp:Fbound} hold true:
\begin{align*}
F_i^2\big(z_1,(x_1,\ldots,x_i)\big)-F_i^2\big(z_2,(x_1,\ldots,x_i)\big)&=g(z_1)\sum_{k=1}^i\psi_2(x_k)-g(z_2)\sum_{k=1}^i\psi_2(x_k)\\
&=\big(g(z_1)-g(z_2)\big)\sum_{k=1}^i\psi_2(x_k)
\end{align*}
so
\begin{align*}
\big\lvert F_i^2\big(z_1,(x_1,\ldots,x_i)\big)-F_i^2\big(z_2,(x_1,\ldots,x_i)\big)\big\rvert&\leq L_g\,\lvert z_1-z_2\rvert\,\sum_{k=1}^i\psi_2(x_k)\\
&\leq L_g\,i\,\|\psi_2\|_\infty\,\lvert z_1-z_2\rvert,
\end{align*}
and
\begin{align*}
\lvert F_i(z,0)\rvert&=i\,\lvert g(z)\,\psi_2(0)\rvert\\
&\leq i\,\big(\lvert g(z)-g(0)\rvert+\lvert g(0)\rvert\big)\,\lvert\psi_2(0)\rvert\\
&\leq i\,\psi_2(0)\big(\lvert z\rvert\,+\lvert g(0)\rvert\big).
\end{align*}
Then we have the following theorem
\begin{theorem}
Let $z \in \mathbb R^d$. There exists a continuous  averaged process $\bar{\mathbf Z}^z$ such that the sequence of processes $(\mathbf{Z}^{n,z})_{n\geq 1}$ converges in law for the Skorokhod topology on $D([0,+\infty),\mathbb R^d)$ where $D([0,+\infty), \mathbb R^d)$ is equipped with the Skorokhod topology induced by the supremum norm where $\bar{\mathbf Z}^z$ is solution of the stochastic differential equation where $\bar{\mathbf Z}^z$ is solution of the stochastic differential equation 
\[
\frac{\mathrm d\bar{\mathbf Z}_t^z}{\mathrm dt}=\bar F_{\mathbf I_t}^2(\bar{\mathbf Z}_t^z),\quad\mathbf Z_0^z=z
\]
with 
\begin{align*}
\bar F_i^2(z)=\int_{E_i}F_i^2(z,x)\,\mu_i(\mathrm dx).
\end{align*}
\end{theorem}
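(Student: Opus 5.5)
The plan is to reduce the statement to Theorem~\ref{thm:convSkorokhod} on each $[0,T]$, and then pass to $D([0,+\infty),\mathbb R^d)$ using the remark after that theorem. That remark says convergence on every $[0,T]$, together with non-explosion of $\mathbf I$ and tightness of the one-dimensional laws of $\bar{\mathbf Z}^z_t$, gives convergence on the half-line. The work is to check the standing assumptions \eqref{hyp:Lip}, \eqref{hyp:unifintegr}, \eqref{hyp:ErgoF} and \eqref{hyp:Fbound} for $F_i^2$ in the branching model, and then the non-explosion hypothesis.

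\textbf{Regularity.} The computation just before the statement gives the Lipschitz bound $L_i\le L_g\,i\,\|\psi_2\|_\infty<\infty$ for each fixed $i$. For \eqref{hyp:Fbound} I will use $\sup_{x}|F_i^2(0,x)|\le i\,|g(0)|\,\|\psi_2\|_\infty<\infty$, rather than the misprinted bound in the text. Both constants depend on $i$, but this is allowed, since the proofs only used $\mathbf P(L_{\mathbf I_{\tau_j}}>L)\to0$ as $L\to\infty$. The uniform integrability \eqref{hyp:unifintegr} of the total branching rate $b(x_1,\dots,x_i)=\sum_k\beta(x_k)$ on $E_i$ follows from the corresponding assumption on $\beta$ for $Y$: a finite sum of uniformly integrable families is uniformly integrable, as in Section~4.1 of \cite{kagan2025averaging}.

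\textbf{Ergodicity.} For fixed $z$ and $i$, $F_i^2(z,\cdot)=g(z)\sum_{k\le i}\psi_2(x_k)$, with the $x_k$ independent copies of $Y$. Hence $\mu_i=\chi^{\otimes i}$ and $\bar F_i^2(z)=i\,g(z)\,\chi(\psi_2)$. Condition \eqref{hyp:ErgoF} reduces to showing
\[
\sup_{t>0}\mathbf E_y\Big|\frac1T\int_t^{t+T}\psi_2(Y_{ns})\,\mathrm ds-\chi(\psi_2)\Big|\to0 .
\]
I would get this by writing the square of the left-hand side as a double integral of covariances. These are controlled by the total variation convergence $\|\mathbf P_y(Y_t\in\cdot)-\chi\|_{TV}\to0$ and the Markov property, since $\psi_2$ is bounded. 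For uniformity in $t$ I would first check whether the total variation convergence to $\chi$ is uniform over the starting law of $Y_{nt}$; if it is not, I would strengthen the assumption slightly. \emph{This is the step I expect to be the main obstacle.}

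\textbf{Non-explosion and tightness.} The limit $\mathbf I$ is a continuous-time Galton--Watson process with rate $\chi(\beta)$. Its rate in state $i$ is linear, $i\chi(\beta)$, so it is non-explosive as long as the offspring distribution $L$ has finite mean, which I would assume or take from \cite{kagan2025averaging}. Thus $\mathbf P_i(\tau_\infty>T)=1$ for all $T$. Between jumps $\bar{\mathbf Z}^z$ solves a globally Lipschitz ODE, so $\bar{\mathbf Z}^z_t$ is finite almost surely, and its law on $\mathbb R^d$ is tight. Theorem~\ref{thm:convSkorokhod} then applies on every $[0,T]$, and the remark yields convergence on $D([0,+\infty),\mathbb R^d)$.
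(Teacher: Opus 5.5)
Your route is the paper's own. The paper gives no argument for this theorem beyond checking \eqref{hyp:Lip} and a misprinted form of \eqref{hyp:Fbound} just before the statement, implicitly invoking the general convergence theorem as in the additive case. Your proposal, which also treats \eqref{hyp:unifintegr}, \eqref{hyp:ErgoF}, non-explosion and the passage to $[0,+\infty)$, is a more complete version of that argument.

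\textbf{The flagged step.} The one step that is not yet a proof is the one you flag, and you do not need to strengthen any hypothesis: uniformity over starting points is never needed. Put $\tilde\psi=\psi_2-\chi(\psi_2)$ and let $(P_s)_{s\ge0}$ be the semigroup of $Y$. Set $h_n(x)=\mathbf E_x\big|\frac1T\int_0^T\tilde\psi(Y_{nu})\,\mathrm du\big|$, which is bounded by $2\|\psi_2\|_\infty$. By the Markov property at the deterministic time $nt$, the one-coordinate quantity in \eqref{hyp:ErgoF} equals $P_{nt}h_n(y)$. Hence
\[
P_{nt}h_n(y)\le\chi(h_n)+C\,\|P_{nt}(y,\cdot)-\chi\|_{TV},
\]
with $C$ depending only on $\|\psi_2\|_\infty$.

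\begin{itemize}
\item Since $\chi$ is invariant, $s\mapsto\|P_s(y,\cdot)-\chi\|_{TV}$ is non-increasing. So for $\varepsilon>0$ there is $A$ such that the last term is at most $C\varepsilon$ whenever $nt\ge A$.
\item When $nt<A$, shifting the window from $[0,T]$ to $[t,t+T]$ changes the average by at most $4A\|\psi_2\|_\infty/(nT)$. The quantity is then at most $h_n(y)+4A\|\psi_2\|_\infty/(nT)$.
\item Finally, $h_n(x)\to0$ for every $x$ by the ergodic theorem for $Y$ and bounded convergence. Hence $\chi(h_n)\to0$ by dominated convergence, and $\limsup_n\sup_{t>0}P_{nt}h_n(y)\le C\varepsilon$.
\end{itemize}

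The same device repairs your covariance computation. There the starting point enters only through $\mathbf E_y[\phi_r(Y_{nu})]$, with the bounded function $\phi_r(x)=\|P_r(x,\cdot)-\chi\|_{TV}$. You have $\mathbf E_y[\phi_r(Y_{nu})]\le\chi(\phi_r)+C\|P_{nu}(y,\cdot)-\chi\|_{TV}$, and $\chi(\phi_r)\to0$ as $r\to\infty$ by dominated convergence.

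\textbf{Non-explosion.} Your finite-mean assumption on $L$ is not a defect of your argument. Theorem~\ref{thm:convSkorokhod} needs $\mathbf P_x(\tau_\infty>T)=1$, and a Galton--Watson process with linear rates can explode when the offspring law has infinite mean. The statement as written supplies no such condition, so some non-explosion hypothesis has to be added or imported from \cite{kagan2025averaging}.
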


\bibliographystyle{plain} % ou alpha / abbrv
\bibliography{references}

\end{document}